\colorlet{cite}{red}
\tikzset{ 
  baseline=-2.3pt,
  text height=1.5ex, text depth=0.25ex,
  >=stealth,
  node distance=2cm,
  mid/.style={fill=white,inner sep=2.5pt},
}
\newtheoremstyle{mydef}
  {}		
  {}		
  {}		
  {}		
  {\scshape}	
  {. }		
  { }		
  {\thmname{#1}\thmnumber{ #2}\thmnote{ #3}}	
\newtheorem{theorem}{Theorem}[section]
\newtheorem*{theorem*}{Theorem}
\newtheorem{proposition}[theorem]{Proposition}
\newtheorem*{proposition*}{Proposition}
\newtheorem{lemma}[theorem]{Lemma}
\newtheorem*{lemma*}{Lemma}
\newtheorem{corollary}[theorem]{Corollary}
\newtheorem*{corollary*}{Corollary}
\theoremstyle{definition}
\newtheorem{definition}[theorem]{Definition}
\newtheorem{example}[theorem]{Example}
\theoremstyle{remark}
\newtheorem{remark}[theorem]{Remark}
\newtheorem*{conjecture*}{Conjecture}
\newcommand{\rr}{\rightrightarrows}
\author{Fabricio Valencia}
\subjclass[2020]{22A22, 57R18, 57R70}
\address{}
\date{\today}
\dedicatory{To the memory of Joaquim Fernando Lazzaro Coelho}
\address{F. Valencia - Instituto de Matem\'atica e Estat\'istica, Universidade de S\~ao Paulo, Rua do Mat\~ao 1010, Cidade Universit\'aria, 05508-090 S\~ao Paulo - Brasil. \newline  
      \phantom{xx}
 fabricio.valencia@ime.usp.br}
\title{Novikov type inequalities for orbifolds}
\begin{document}
\maketitle

\begin{abstract}
We propose a natural extension of the Novikov numbers for the basic cohomology class of a closed basic $1$-form on a proper Lie groupoid of finitely generated type. As an application, we prove corresponding Novikov inequalities for compact orbifolds.
\end{abstract}

\tableofcontents
\section{Introduction}


Morse inequalities relate the number of critical points of a Morse function on a compact manifold $M$ to its Betti numbers. More precisely, these inequalities determine a topological lower bound for the number of critical points with fixed index of a Morse function in terms of homology invariants and allow one to recover the Euler characteristic of $M$. In the seminal works \cite{No1,No2} Novikov started a generalization of Morse theory in which instead of critical points of smooth functions he dealt with closed 1-forms and 
their zeros, thus obtaining inequalities that generalize the Morse inequalities. To do so, Novikov defined numbers $b_j(\xi)$ and $q_j(\xi)$ which depend on a real cohomology class $\xi\in H^1(M,\mathbb{R})$, proving that any closed 1-form $\omega$ with Morse-type zeros has at least $b_j(\xi)+q_j(\xi)+q_{j-1}(\xi)$ zeros of index $j$ where $\xi=[\omega]$. Of course, the case $\xi=0$ gives rise to the usual Morse inequalities. Nowadays, Novikov theory is widely known, as it provides several applications in geometry, topology, analysis, and dynamics. Even though those inequalities were initially stated by Novikov, their rigorous proof was given by Farber in \cite{Far5}. It is worth mentioning that the \emph{Novikov inequalities} are closely related to the problem of finding topological lower bounds for the numbers of zeros which have Morse closed 1-forms lying in a given cohomology class. In  symplectic geometry such a problem turns out to be equivalent to finding topological lower bounds for the number of zeros of symplectic vector fields. We recommend the reader to consult \cite{F,Pa} for specific details about the classical Novikov inequalities and the study of closed 1-forms of Morse type on compact manifolds.

The main purpose of this paper is to extend some of the ingredients of the classical Novikov theory to the realm of Lie groupoids and their differentiable stacks, thus providing a way to generalize the Novikov inequalities for compact orbifolds. In this spirit, we plan to deal with cohomology classes of closed basic 1-forms on Lie groupoids whose orbits of zeros are nondegenerate in the sense of Morse--Bott theory. Before explaining our approach we may state the main result of this work in the following way:

\begin{theorem*}
Let $X$ be a compact orbifold and $\omega$ be a closed 1-form of Morse type on $X$. If $c_j(\omega)$ denotes the number of zeros of $\omega$ having Morse index $j$ then
$$c_j(\omega)\geq b_j(\xi)+q_j(\xi)+q_{j-1}(\xi), $$
where $\xi\in H^1(X,\mathbb{R})$ stands for the cohomology class of $\omega$.
\end{theorem*}

We will use proper étale groupoids
as orbifold atlases, so that we can think of an orbifold as being the orbit space of a Lie groupoid of this kind. Such a point of view turns out to be quite useful when dealing with several geometric and algebraic notions concerning the structure of an orbifold, or even a singular orbit space in general. In order to make sense of every term in the statement of our main result we need to introduce some terminology. Let $G\rr M$ be a Lie groupoid and let $\Pi_1(G)\rr M$ denote its fundamental groupoid of $G$-homotopy classes of $G$-paths, see \cite{BH,MoMr}. The fundamental group $\Pi_1(G, x_0)$ of $G$ with respect to a base-point $x_0\in M$ is defined to be the isotropy group $\Pi_1(G)_{x_0}$ which consists of $G$-homotopy classes of $G$-loops at $x_0$. Let us denote by $H_\bullet(G,\mathbb{Z})$ the total singular homology of $G$. There is a canonical way to define a Hurewicz $G$-homomorphism $h:\Pi_1(G, x_0)\to H_1(G,\mathbb{Z})$ which restricts as an isomorphism to the abelianization of $\Pi_1(G, x_0)$. Let $\omega$ be a closed basic $1$-form on $M$. That is, $\omega$ is a closed $1$-form on $M$ satisfying $(t^\ast - s^\ast)(\omega)=0$, so that this is the same as saying that $\omega$ is a simplicially closed 1-form on $G\rr M$ with respect to the Bott-Shulman-Stasheff double complex. For each smooth $G$-path $\sigma=\sigma_ng_n\sigma_{n-1}\cdots \sigma_1g_1\sigma_0$ in $M$ we can naturally define the $G$-path integral $\int_{\sigma}\omega=\sum_{k=0}^{n}\int_{\sigma_k}\omega$. It canonically determines a group homomorphism $l_{\omega}:\Pi_1(G,x_0)\to (\mathbb{R},+)$ which factors through the Hurewicz $G$-homomorphism $h$ by a uniquely determined group homomorphism $\textnormal{Per}_\xi: H_1(G,\mathbb{Z})\to (\mathbb{R}, +)$ which only depends on the basic cohomology class $\xi:=[\omega]$. This will be called the $G$-homomorphism of periods of $\xi$. Let \textbf{Nov} denote the Novikov ring of $\mathbb{R}$ and let us further assume that our Lie groupoid $G\rr M$ is proper with compact orbit space $M/G$. We can define a ring homomorphism $\phi_\xi: \mathbb{Z}(\Pi_1(G,x_0))\to \textbf{Nov}$ by setting $\phi_\xi([\sigma]):=\tau^{\textnormal{Per}_\xi(h([\sigma]))}$ for all $[\sigma]\in \Pi_1(G,x_0)$. It yields a local system $\mathcal{L}_\xi$ of left $\textbf{Nov}$-modules over $G\rr M$ whose total homology groups $H_j^{\textnormal{tot}}(G,\mathcal{L}_\xi)$ are called Novikov homology groups of $\xi$. When the homology $H_j^{\textnormal{tot}}(G,\mathcal{L}_\xi)$ is a finitely generated module over the ring {\bf Nov} we get that it is a direct sum of a free submodule with a torsion submodule since {\bf Nov} is a principal ideal domain. In this case, the Novikov Betti number $b_j(\xi)$ is defined to be the rank of the  free summand of $H_j^{\textnormal{tot}}(G,\mathcal{L}_\xi)$ and the Novikov torsion number $q_j(\xi)$ is defined to be the minimal number of generators of the torsion submodule of $H_j^{\textnormal{tot}}(G,\mathcal{L}_\xi)$. Lie groupoids for which the homology $H_j^{\textnormal{tot}}(G,\mathcal{L}_\xi)$ is a finitely generated module over {\bf Nov} are particular cases of what we shall call Lie groupoids of finitely generated type.

The set of zeros of a closed basic $1$-form $\omega$ on $M$ is saturated so that it is formed by a disjoint union of groupoid orbits. Therefore, we say that an orbit $\mathcal{O}_x$ of zeros of $\omega$ is nondegenerate if its normal Hessian is a nondegenerate fiberwise bilinear symmetric form on $\nu(O_x)$. Accordingly, $\omega$ is said to be of Morse type if all of its orbits of zeros are nondegenerate. Let us fix a Riemannian groupoid metric on $G\rr M$ in the sense of del Hoyo and Fernandes \cite{dHF}. The nondegeneracy requirement over $\mathcal{O}_x$ imposed above allows us to use the groupoid metric to split $\nu(\mathcal{O}_x)$ into the Whitney sum of two subbundles $\nu_-(\mathcal{O}_x)\oplus \nu_+(\mathcal{O}_x)$ such that the normal Hessian of $\omega$ around $\mathcal{O}_x$ is strictly negative on $\nu_-(\mathcal{O}_x)$ and strictly positive on $\nu_+(\mathcal{O}_x)$. Let $G_x$ be the isotropy group at $x$. From \cite{OV} we know that the normal Hessian is invariant with respect to the normal representation $G_x\curvearrowright \nu(\mathcal{O}_x)_x$ so that it preserves the splitting above since the normal representation is by isometries in this case. In consequence, we get a normal sub-representation $G_x\curvearrowright \nu_-(\mathcal{O}_x)_x$. The index of $\mathcal{O}_x$ in $M/G$ is defined to be $\dim \nu_-(\mathcal{O}_{x})_x/G_{x}=\dim \nu_-(\mathcal{O}_{x})_x-\dim G_{x}$. It follows that closed basic $1$-forms $\omega$ on $M$ allow us to speak about stacky zeros of closed stacky 1-forms $\overline{\omega}$ in the differentiable stack $[M/G]$ presented by $G\rr M$, so that the previous expression gives rise to a notion of stacky index for the nondegenerate stacky zeros of $\overline{\omega}$, provided $\omega$ is of Morse type.

After choosing a proper étale Lie groupoid $G\rr M$ with compact orbit space $X=M/G$ the statement of our main result becomes clear. Additionally, some observations regarding the generality of our approach come in order below.

\begin{remark}

First, even though our approach seems to be applicable to more general cases, we can guarantee that the total singular homology $H_\bullet(G,\mathbb{Z})$ of $G\rr M$ recovers the singular homology $H_\bullet(M/G,\mathbb{Z})$ of the orbit space $M/G$ only for some particular cases, including orbifolds. This is because we shall need the additional assumption asking for the basic cohomology of $G$ to be isomorphic to the total the de Rham cohomology of $G$. Second, the general results and constructions that we introduce for Lie groupoids can be adapted to study other interesting features of closed basic 1-forms of Morse type. For instance, Morse theory of basic Harmonic forms as well as the topology of singular foliations of closed basic 1-forms over Riemannian Lie groupoids can be studied by using our techniques. This will be the content of the forthcoming work \cite{LGV}. Third, there exists a one-to-one correspondence between zeros of closed basic 1-forms and zeros of symplectic vector fields on $0$-symplectic groupoids in the sense of Hoffman--Sjamaar \cite{hsz}. Thus, the natural generalization of the Novikov theory that we have described above provides a tool for using topological methods to study zeros of symplectic vector fields on symplectic orbifolds.
\end{remark}

The paper is structured as follows. In Section \ref{S:2}, we show a $G$-version of the Hurewicz homomorphism and exhibit how to construct a groupoid covering space over $G\rr M$ out of a fixed subgroup in $\Pi_1(G, x_0)$. This is the content of Propositions \ref{Hur} and \ref{Covering}, respectively. Motivated by the notion of local coefficients system for topological spaces we introduce a corresponding notion as well as its associated homology in the groupoid setting. Indeed, we define a double chain complex out of a local system of modules over a Lie groupoid, thus proving that its associated total homology is Morita invariant, compare Lemma \ref{LemDoubleComplex} and Proposition \ref{MoritaLocal}. We also show in Proposition \ref{EilenbergProp} a $G$-version of the Eilenberg homology isomorphism.  In Section \ref{S:3} we study closed basic $1$-forms. We define the $G$-homomorphism of periods associated to the basic cohomology class $\xi$ of a closed basic $1$-form $\omega$ on $M$ and explore some of its elementary properties. In particular, we characterize when $\xi$ is an integral class and describe the groupoid covering space associated to the kernel of the $G$-homomorphism of periods of $\xi$, see Propositions \ref{IntegralClass} and \ref{FormulaEqui}. We introduce closed basic $1$-forms of Morse type and prove that such a notion is Morita invariant, see Proposition \ref{MoritaMorse}. This gives rise to a notion of stacky closed $1$-form of Morse type over the differentiable stack $[M/G]$ presented by $G\rr M$. Finally, in Section \ref{S:4} we present three different manners to define the Novikov numbers $b_j(\xi)$ and $q_j(\xi)$ associated to the basic cohomology class $\xi$ over a proper Lie groupoid of finitely generated type. As a result we show the Novikov inequalities for compact orbifolds, compare Theorem \ref{ThmNokivovOrbifolds}, and quickly explain how to use this result in order to find a lower bound for the amount of zeros of certain symplectic vector fields on symplectic orbifolds.

\vspace{.2cm}
{\bf Acknowledgments:} Part of this work was carried out during a visit to the Dipartimento di Matematica, Universit\`a degli Studi di Salerno, Fisciano, Italy in 2023.  I am very thankful for the hospitality and support that the Geometry Group gave me while being there. I have benefited from several conversations with Juan Camilo Arias, Daniel L\'opez-Garcia, Antonio Maglio, and Cristi\'an Ortiz. I am grateful to Luca Vitagliano for reading earlier versions of this work and for his helpful feedback. I am indebted to the anonymous referee because all her/his comments and suggestions greatly improved this work. Valencia was supported by Grants 2020/07704-7, 2022/11994-6, and 2024/14883-6  Sao Paulo Research Foundation - FAPESP.

\section{Some algebraic topology ingredients}\label{S:2}

Before defining the $G$-homomorphism of periods of a closed basic $1$-form as well as its corresponding Novikov numbers we have to fill out some algebraic topology gaps concerning the fundamental Lie groupoid of $G$-homotopy classes of $G$-paths  which, to our knowledge, do not seem to have been described before in the literature. We will assume that the reader is familiar with the notion of Lie groupoid and the geometric/topological aspects underlying its structure \cite{dH,MoMr}. Throughout this paper the structural maps of a Lie groupoid $G\rr M$ are denoted by $(s,t,m,u,i)$ where $s,t:G\to M$ are the maps respectively indicating the source and target of the arrows, $m:G^{(2)}\to G$ stands for the partial composition of arrows, $u:M\to G$ is the unit map, and $i:G\to G$ is the map determined by the inversion of arrows. The fundamental groupoid of $G$-homotopy classes of $G$-paths is denoted by  $\Pi_1(G)\rr M$ and its isotropy group at $x_0\in M$ is denoted by $\Pi_1(G, x_0)$ and called the \emph{fundamental group} of $G$ with respect to a base-point $x_0$, see \cite[s. 3.3]{MoMr} and \cite[c. G; s. 3]{BH}. Out of the simplicial manifold structure associated to $G\rr M$ we can define a singular double chain complex denoted by $\lbrace C_\bullet(G^{(\bullet)}),\partial,d\rbrace$ whose total singular homology is denoted by $H_\bullet (G,\mathbb{Z})$. Additionally, the total cohomology of the standard Bott-Shulman-Stasheff double cochain complex $\lbrace \Omega_\bullet(G^{(\bullet)}),\overline{\partial},d_{dR}\rbrace$ of $G$ determines the de Rham cohomology $H^\bullet_{dR}(G)$ of $G$, compare \cite{Be,crainicMoer} and \cite[s. 2]{Dupont}.  It is important to mention that, on the one hand, all these algebraic notions are Morita invariant, so they give rise to corresponding notions for the differentiable stacks presented by the Lie groupoids with which we will work. On the other hand, the singular homology of associated to the simplicial structure of $G$ as well as the fundamental groups of $G$ coincide with those related notions for the underlying geometric realization of $G$.

\subsection{Hurewicz $G$-homomorphism and coverings}
Let $G\rr M$ be a $G$-connected Lie groupoid and fix $x_0\in M$. Consider $\Pi_1(G,x_0)$ and $H_1(G,\mathbb{Z})$ the fundamental group of $G$ with respect to the base-point $x_0$ and the first total singular homology group of $G$, respectively. It is simple to check that every $G$-loop at $x_0$ canonically defines a $1$-cycle in $\tilde{C}_1(G)=C_0(G)\oplus C_1(M)$. Hence, this suggests us to define the map $h:\Pi_1(G,x_0)\to H_1(G,\mathbb{Z})$ which sends the $G$-homotopy class $[\sigma]$ of a $G$-loop $\sigma$ at $x_0$ to its corresponding homology class $|\sigma|$.

\begin{lemma}\label{LemHur1}
Let $\sigma$ and $\sigma'$ be two $G$-paths. The following assertions hold true:
\begin{itemize}
\item $\sigma+\sigma^{-1}$ is equivalent to a boundary in $\tilde{C}_1(G)$, and
\item if $\sigma_0'(0)=\sigma_n(1)$ then $\sigma\ast \sigma'-\sigma-\sigma'$ is also equivalent to a boundary in $\tilde{C}_1(G)$.
\end{itemize}
\end{lemma}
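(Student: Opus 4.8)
The plan is to argue entirely at the chain level in the total complex $(\tilde C_\bullet(G),\delta)$ and to display the two cycles as explicit boundaries, treating the ``path part'' and the ``arrow part'' of a $G$-path separately. Recall that a $G$-path $\sigma=\sigma_n g_n\sigma_{n-1}\cdots\sigma_1 g_1\sigma_0$ determines the $1$-chain
$$c(\sigma):=\sum_{k=0}^{n}\sigma_k+\sum_{j=1}^{n}g_j\ \in\ C_1(M)\oplus C_0(G)=\tilde C_1(G),$$
whose path summands $\sigma_k$ lie in $C_1(G^{(0)})$ and whose arrow summands $g_j$ lie in $C_0(G^{(1)})$; this is the $1$-cycle attached to a $G$-loop, and in general a short computation with $\delta(\gamma)=(-1)^{q+n}\partial\gamma+(-1)^q d\gamma$ gives $\delta\,c(\sigma)=\sigma_0(0)-\sigma_n(1)$. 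In particular $c(\sigma)+c(\sigma^{-1})$ and, when $\sigma_0'(0)=\sigma_n(1)$, the chain $c(\sigma\ast\sigma')-c(\sigma)-c(\sigma')$ are genuine cycles, so it remains only to see that they bound.

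I would next record two auxiliary observations. First, since $\partial$ vanishes on $C_\bullet(G^{(0)})$, the subgroups $C_q(M)=C_q(G^{(0)})$ form a subcomplex of $\tilde C_\bullet(G)$ on which $\delta$ agrees with $d$ up to sign; hence every singular boundary in $M$ is a $\delta$-boundary in $\tilde C_\bullet(G)$. Second, for every $x\in M$ the unit $u(x)\in C_0(G)\subset\tilde C_1(G)$ is a $\delta$-boundary: the constant $0$-simplex $(u(x),u(x))\in C_0(G^{(2)})$ satisfies $\delta(u(x),u(x))=\partial(u(x),u(x))=(d_0^2-d_1^2+d_2^2)(u(x),u(x))=u(x)-u(x)+u(x)=u(x)$.

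Granting these, the second bullet is immediate from the concatenation formula: $\sigma\ast\sigma'$ carries exactly the path pieces and arrow pieces of $\sigma$ and of $\sigma'$, plus the single extra arrow $1_{\sigma_n(1)}$, so $c(\sigma\ast\sigma')-c(\sigma)-c(\sigma')=u(\sigma_n(1))$, which bounds by the second observation. For the first bullet, write $\sigma^{-1}=\bar\sigma_0\,i(g_1)\,\bar\sigma_1\cdots i(g_n)\,\bar\sigma_n$ with $\bar\sigma_k(t)=\sigma_k(1-t)$, so that $c(\sigma)+c(\sigma^{-1})=\sum_{k}(\sigma_k+\bar\sigma_k)+\sum_{j}(g_j+i(g_j))$. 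Each $\sigma_k+\bar\sigma_k$ is a singular boundary in $M$ --- for instance $dF_k=\bar\sigma_k-(\text{the constant }1\text{-simplex at }\sigma_k(0))+\sigma_k$ for the piecewise-smooth $2$-simplex $F_k(t_0,t_1,t_2):=\sigma_k(t_1)$, and a constant $1$-simplex is itself the boundary of a constant $2$-simplex --- hence a $\delta$-boundary by the first observation; and for each arrow $g$ the composable pair $(i(g),g)\in G^{(2)}$ (note $s(i(g))=t(g)$) satisfies $\delta(i(g),g)=g+i(g)-u(s(g))$, so $g+i(g)$ bounds by the second observation. Summing, $c(\sigma)+c(\sigma^{-1})$ is a $\delta$-boundary.

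I do not expect a genuine obstacle here; the only point demanding care is the bookkeeping --- the signs in the total differential $\delta$ and in the face maps $d_k^n$, and the fact (classical for spaces, but not cost-free) that a path in $M$ and its reversal differ by a chain-level boundary, which is why I would spell out the auxiliary $2$-simplex $F_k$ rather than merely invoke it.
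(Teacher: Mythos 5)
Your proof is correct, and for the first bullet it is essentially the paper's argument: the path pieces $\sigma_k+\bar\sigma_k$ are killed by the classical $2$-simplex trick (the paper cites Bredon's Lemma 3.2 where you write out $F_k$ explicitly), and the arrow pieces are killed by a composable pair in $G^{(2)}$ (the paper uses $(g_j,g_j^{-1})$, you use $(i(g_j),g_j)$ --- the same device). Where you genuinely diverge is in two places, both to your advantage in explicitness. First, the unit $0$-chains left over from $\partial(i(g),g)$, and the unit arrow $1_{\sigma_n(1)}$ inserted by the definition of $\sigma\ast\sigma'$, are handled by your observation that $u(x)=\delta\bigl((u(x),u(x))\bigr)$ with $(u(x),u(x))\in C_0(G^{(2)})$; the paper leaves these unit terms implicit (they are absorbed into the ``equivalent to'' of the statement via the multiplication/concatenation equivalences of $G$-paths and degenerate simplices). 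Second, for the second bullet the paper first merges $\sigma_n$ and $\sigma_0'$ into a single concatenated path (using the concatenation equivalence) and then invokes Bredon's Lemma 3.1 that $\sigma_0'\cdot\sigma_n-\sigma_n-\sigma_0'$ bounds, whereas you observe that at the chain level $c(\sigma\ast\sigma')-c(\sigma)-c(\sigma')$ is exactly $u(\sigma_n(1))$, which bounds by the unit trick; this gives the statements as literal $\delta$-boundaries rather than boundaries up to equivalence, and is arguably cleaner. Your sign bookkeeping in the total differential checks out against the paper's conventions (in particular $\delta c(\sigma)=\sigma_0(0)-\sigma_n(1)$ and $\delta(i(g),g)=g+i(g)-u(s(g))$ are correct), so there is no gap.
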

\begin{proof}
Firstly, for every path $\sigma_j$ in $\sigma=\sigma_ng_n\sigma_{n-1}\cdots \sigma_1g_1\sigma_0$ we may argue as in Lemma 3.2 from \cite[p. 173]{Br} and for every arrow $g_j$ in $\sigma$ we consider the pair of composable arrows $(g_j,g_j^{-1})$. Secondly, observe that the expression $\sigma\ast \sigma'-\sigma-\sigma'$ equals
$$\sigma_n\ast\sigma_0'-\sigma_n-\sigma_0'+\textnormal{boundary}.$$ 

So, the last assertion follows directly as a consequence of Lemma 3.1 in \cite[p. 173]{Br}.
\end{proof}

As probably expected, the map $h$ can be thought of as a Hurewicz homomorphism in our context. Although such a homomorphism becomes the standard Hurewicz map at the level of the geometric realization of $G$ we decided to include a detailed proof of the result below at the level of groupoids. This is because some of the ideas and techniques used in such a proof will be instrumental for other results later on.

\begin{proposition}[Hurewicz $G$-homomorphism]\label{Hur}
	The map $h$ is a well defined group homomorphism which restricts to the abelianization $\Pi_1^{ab}(G,x_0)=\Pi_1(G,x_0)/[\Pi_1(G,x_0),\Pi_1(G,x_0)]
	$ as an isomorphism.
\end{proposition}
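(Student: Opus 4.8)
The plan is to adapt, step by step, the classical proof that the first singular homology group of a path-connected space is the abelianization of its fundamental group (see \cite[pp.~173--175]{Br}), replacing ordinary paths by $G$-paths and invoking Lemma \ref{LemHur1} throughout. First I would check that $h$ is well defined, i.e.\ that $G$-homotopic $G$-loops at $x_0$ represent the same class in $H_1(G,\mathbb{Z})$. Since the $G$-homotopy relation is generated by the multiplication equivalences, the concatenation equivalences and the deformations, it suffices to verify invariance of the homology class under each of these three moves: inserting or deleting a constant path alters the $1$-cycle in $\tilde{C}_1(G)=C_0(G)\oplus C_1(M)$ by a degenerate singular $1$-simplex, hence by a boundary; inserting or deleting a unit arrow changes it by the degenerate $0$-simplex $1_y\in C_0(G)$ (itself a boundary, being $\delta(1_y,1_y)$) together with the difference between $\sigma_j+\sigma_{j-1}$ and the ordinary concatenation $\sigma_j\cdot\sigma_{j-1}$, which is again a boundary by Lemma~3.1 of \cite[p.~173]{Br}; and a deformation assembles, from the homotopies $D_j$ and the paths $d_j$ in $G$, a $2$-chain in $\tilde{C}_2(G)=C_0(G^{(2)})\oplus C_1(G)\oplus C_2(M)$ whose $\delta$-boundary is exactly the difference of the two associated $1$-cycles. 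That $h$ is a group homomorphism then follows at once from the second item of Lemma \ref{LemHur1}; and because $H_1(G,\mathbb{Z})$ is abelian, $h$ descends to a homomorphism $\bar h\colon\Pi_1^{ab}(G,x_0)\to H_1(G,\mathbb{Z})$, so the proposition reduces to showing that $\bar h$ is bijective.

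\textbf{Surjectivity.} I would start from a $1$-cycle $z\in\tilde{C}_1(G)=C_0(G)\oplus C_1(M)$; after replicating simplices and using the first item of Lemma \ref{LemHur1} to trade each $-g$ for $g^{-1}$ and each negatively oriented path for its reverse (each at the cost of a boundary), I may assume $z=\sum_i g_i+\sum_j\tau_j$ with all coefficients equal to $1$. By $G$-connectedness I fix, for every point $y$ occurring as an endpoint of some $g_i$ or $\tau_j$, a $G$-path $\lambda_y$ from $x_0$ to $y$, with $\lambda_{x_0}$ constant, and form the $G$-loops $\nu_i=\lambda_{t(g_i)}^{-1}\ast g_i\ast\lambda_{s(g_i)}$ and $\mu_j=\lambda_{\tau_j(1)}^{-1}\ast\tau_j\ast\lambda_{\tau_j(0)}$ based at $x_0$. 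Repeated use of Lemma \ref{LemHur1} then shows that the $1$-cycle of the concatenation of all the $\nu_i$ and $\mu_j$ differs from $z$ by a boundary together with a combination $\sum_y n_y c_y$, where $c_y$ is the (non-closed) $1$-chain carried by $\lambda_y$ and $n_y\in\mathbb{Z}$ is the net number of times $y$ appears among the endpoints of the $g_i$ and the $\tau_j$; but $\delta z=0$ says precisely that every $n_y$ vanishes, whence $|z|=\bar h\big(\prod_i[\nu_i]\cdot\prod_j[\mu_j]\big)$.

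\textbf{Injectivity} is where I expect the real difficulty to sit. Let $\sigma$ be a $G$-loop at $x_0$ whose associated $1$-cycle equals $\delta w$ for some $w=A+B+C$ with $A\in C_2(M)$, $B\in C_1(G)$, $C\in C_0(G^{(2)})$. Expanding $\delta w$ and reading off the three summands, each elementary generator of $w$ produces a relation that already holds in $\Pi_1(G,x_0)$: a singular $2$-simplex in $M$ gives the familiar relation declaring its edge-loop $G$-nullhomotopic; a path $d$ in $G$ gives the deformation relation identifying the $G$-path ``arrow $d(0)$, then path $t\circ d$'' with the $G$-path ``path $s\circ d$, then arrow $d(1)$''; and a composable pair in $G^{(2)}$ gives the multiplication equivalence expressing a composite arrow as the $G$-path of its two factors. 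Conjugating each of these relations to the base point $x_0$ by a fixed $G$-path and collecting terms, one writes $[\sigma]$ as a product of conjugates of elements that are trivial in $\Pi_1(G,x_0)$; working modulo commutators these conjugations drop out, so $[\sigma]=1$ in $\Pi_1^{ab}(G,x_0)$, and $\bar h$ is injective. The delicate part is precisely the simultaneous bookkeeping here: keeping track of the three kinds of generators of $\tilde{C}_2(G)$, of the cancellation of edges under $\delta$, and of the auxiliary connecting $G$-paths. A quicker but less self-contained route would be to invoke the identifications $H_\bullet(G,\mathbb{Z})\cong H_\bullet(BG,\mathbb{Z})$ and $\Pi_1(G,x_0)\cong\pi_1(BG,x_0)$, with $BG$ the classifying space of the simplicial space $G^{(\bullet)}$, and then apply the ordinary Hurewicz theorem; but the direct argument above stays inside the combinatorial framework used in this section.
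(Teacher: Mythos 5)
Your plan is correct and essentially reproduces the paper's proof: both adapt Bredon's classical argument via Lemma \ref{LemHur1}, establish well-definedness by turning a deformation into a $2$-chain in $\tilde{C}_2(G)$, and control boundaries through the same three-case analysis (singular $2$-simplices in $M$, paths in $G$, composable pairs in $G^{(2)}$) — with the one small precision that in the multiplication equivalence deleting the constant path also composes the two adjacent arrows, so that move costs the boundary of the composable pair $(g_{j+1},g_j)\in C_0(G^{(2)})$ and not merely a degenerate simplex. The only structural difference is packaging: the paper assembles your surjectivity and injectivity steps into an explicit inverse, defining $l:\tilde{C}_1(G)\to \Pi_1^{ab}(G,x_0)$ by $l(\alpha)=[\sum_{\ast}\lambda_{\alpha_j(1)}^{-1}\ast\alpha_j\ast\lambda_{\alpha_j(0)}]$, showing it kills boundaries by exactly the bookkeeping you sketch for injectivity, and then verifying both composites with $\tilde{h}$.
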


\begin{proof}
We shall follow the usual strategy used to prove the classical version of this result \cite[c.4; s.3]{Br}. Throughout the proof we shall fix $\sigma=\sigma_ng_n\sigma_{n-1}\cdots \sigma_1g_1\sigma_0$ a $G$-loop at $x_0$. In order to see that $h$ is well defined let us consider $\sigma'=\sigma_n'g_n'\sigma_{n-1}'\cdots \sigma_1'g_1'\sigma_0'$ another $G$-loop at $x_0$ which lies in the $G$-homotopy class of $\sigma$. If $\sigma$ and $\sigma'$ are equivalent then the assertion is trivial. Let us suppose then that there is a $G$-homotopy $D(\tau,\cdot)=D_n(\tau,\cdot)d_n(\tau)\cdots d_1(\tau)D_0(\tau,\cdot)$ from $\sigma$ to $\sigma'$. We split the square $[0,1]\times [0,1]$ into two 2-simplices with vertices $\lbrace (0,0), (0,1), (1,1)\rbrace$ and $\lbrace (0,0), (1,0), (1,1)\rbrace$, respectively, so that every homotopy $D_j$ can be
regarded as the sum of two singular 2-simplices over $M$. The diagonal from $(0,0)$ to $(1,1)$ will be denoted by $\Lambda$. Therefore, by using Lemma \ref{LemHur1} we get:
	\begin{eqnarray*}
		\delta D &=& \sum_{j=1}^n(t\circ d_j-s\circ d_j)+\sum_{j=0}^n(s\circ d_{j+1}-D_j|_{\Lambda}-\sigma_j)-\sum_{j=0}^n(\sigma_j'-D_j|_{\Lambda}-t\circ d_j)\\
		&-&\sum_{j=1}^n(g_j'- g_j)+\textnormal{constant}  =  \sigma - \sigma' + \textnormal{boundary},
	\end{eqnarray*}
where $s\circ d_{n+1}:=D_n|_{[0,1]\times 1}=x_0\ \textnormal{and}\ t\circ d_{0}:=D_0|_{[0,1]\times 0}=x_0$. That is, $\sigma$ and $\sigma'$ are homologous which means that $h$ is well defined. Again, as consequence of Lemma \ref{LemHur1} we have that if $[\sigma],[\sigma']\in \Pi_1(G,x_0)$ then 
	$$h([\sigma]\ast [\sigma'])=h([\sigma\ast \sigma'])=|\sigma\ast \sigma'|=|\sigma|+ |\sigma'|=h([\sigma])+h([\sigma]),$$
thus obtaining that $h$ is a group homomorphism. Note that if $\sigma^{-1}$ denotes the inverse $G$-loop of $\sigma$ then $h([\sigma^{-1}])=-|\sigma|$ since $h$ is a homomorphism. Hence, by Lemma \ref{LemHur1} it follows that the commutator subgroup of $\Pi_1(G,x_0)$ lies inside $\textnormal{ker}(h)$ so that we get another well defined group homomorphism $\tilde{h}:\Pi_1^{ab}(G,x_0)\to H_1(G,\mathbb{Z})$.
	
Since our groupoid is $G$-connected we have that for each $x\in M$ there exists some $G$-path $\lambda_x$ from our base-point $x_0$ to $x$. The constant path at $x$ is denoted by $c_x$. Let us construct the inverse group homomorphism of $\tilde{h}$. It is clear that we may think of every arrow $g\in G$ as a $G$-path connecting the constant paths at $s(g)$ and $t(g)$. Also, it is obvious that every path $\gamma:[0,1]\to M$ is equivalent to the $G$-path $c_{\gamma(0)}1_{\gamma(0)}\gamma 1_{\gamma(1)}c_{\gamma(1)}$. In consequence, after possibly reversing orientations and fixing an order, each singular groupoid $1$-chain $\alpha\in \tilde{C}_1(G)$ can be thought of as a formal sum of not necessarily different $G$-paths $\alpha=\sum \alpha_j$. We define the homomorphism $l:\tilde{C}_1(G)\to \Pi_1^{ab}(G,x_0)$ as $l(\alpha)=[\sum_{\ast} \lambda_{\alpha_j(1)}^{-1}*\alpha_j*\lambda_{\alpha_j(0)}]$, where $\sum_{\ast}$ denotes the concatenation of all the  $G$-loops at $x_0$ given by $\lambda_{\alpha_j(1)}^{-1}*\alpha_j*\lambda_{\alpha_j(0)}$. Note that the definition of $l$ does not depend on the choice of the $G$-paths $\lambda_{\alpha_j(0)}$ and $\lambda_{\alpha_j(1)}$ since we are going into $\Pi_1^{ab}(G,x_0)$ instead of $\Pi_1(G,x_0)$. We claim that $l$ takes the boundaries in $\tilde{C}_1(G)$ into $1\in \Pi_1^{ab}(G,x_0)$. Indeed, it is clear that it suffices to check this assertion when we apply the boundary operator $\delta$ over a pair of composable arrows $(g,g')\in G^{(2)}$, a path $\tilde{\sigma}:[0,1]\to G$ and a singular 2-simplex $\Sigma: \triangle_2\to M$. Firstly, $\delta (g,g')=g'-g'g+g$ so that
	\begin{eqnarray*}
		l(\delta (g,g')) & = & l(g')l(g)l(g'g)^{-1}=[\lambda_{s(g')}^{-1}*g'*\lambda_{t(g)}][\lambda_{t(g)}^{-1}*g*\lambda_{s(g)}][\lambda_{s(g')}^{-1}*g'g*\lambda_{s(g)}]^{-1}\\
		& = & [\lambda_{s(g')}^{-1}*g'g(g'g)^{-1}*\lambda_{s(g')}]]=[\textnormal{constant}]=1.
	\end{eqnarray*} 

	Secondly, $\delta \tilde{\sigma}= t\circ \tilde{\sigma} -s\circ \tilde{\sigma}-(\tilde{\sigma}(1)-\tilde{\sigma}(0))$. Note that 
	$$
	l(t\circ \tilde{\sigma}+\tilde{\sigma}(0)-s\circ \tilde{\sigma}-\tilde{\sigma}(1))=l(t\circ \tilde{\sigma})l(\tilde{\sigma}(0))l(s\circ \tilde{\sigma})^{-1}l(\tilde{\sigma}(1))^{-1}\\
	$$
	$$ =[\lambda_{t\circ \tilde{\sigma}(1)}^{-1}*t\circ \tilde{\sigma}*\lambda_{t\circ \tilde{\sigma}(0)}][\lambda_{t\circ \tilde{\sigma}(0)}^{-1}*\tilde{\sigma}(0)*\lambda_{s\circ \tilde{\sigma}(0)}] [\lambda_{s\circ \tilde{\sigma}(1)}^{-1}*s\circ \tilde{\sigma}*\lambda_{s\circ \tilde{\sigma}(0)}]^{-1}[\lambda_{t\circ \tilde{\sigma}(1)}^{-1}*\tilde{\sigma}(1)*\lambda_{s\circ \tilde{\sigma}(1)}]^{-1}$$
	$$ =[\lambda_{t\circ \tilde{\sigma}(1)}^{-1}*(t\circ \tilde{\sigma}) \tilde{\sigma}(0)(s\circ \tilde{\sigma})^{-1} \tilde{\sigma}(1)^{-1}*\lambda_{t\circ \tilde{\sigma}(1)}]=1,$$
	since $\tilde{\sigma}(1)$ is $G$-homotopic to the $G$-path $(t\circ \tilde{\sigma}) \tilde{\sigma}(0)(s\circ \tilde{\sigma})^{-1}$, see \cite[p. 191]{MoMr}. Thirdly, the case of the singular $2$-simplex $\Sigma$ over $M$ follows directly by Lemma 3.5 in \cite[p. 174]{Br}.
	
	The fact we just proved implies that $l$ descends to a well defined group homomorphism $\tilde{l}:H_1(G,\mathbb{Z})\to \Pi_1^{ab}(G,x_0)$. Furthermore, if $\sigma$ is a $G$-loop at $x_0$ then 
	$$(\tilde{l}\circ \tilde{h})([\sigma])=\tilde{l}(|\sigma|)=[\lambda_{x_0}^{-1}*\sigma*\lambda_{x_0} ]=[\sigma],$$
	since $\lambda_{x_0}$ may be chosen to be just the constant $G$-path at $x_0$. Let us now look at the opposite composition. Observe that the assignment $x\mapsto \lambda_{x}$ allows us to send singular groupoid $0$-simplices into singular groupoid $1$-simplices and this can be clearly extended to a homomorphism $\lambda: \tilde{C}_0(G)\to \tilde{C}_1(G)$. Suppose that $\alpha=\sum \alpha_j$ is a singular groupoid 1-chain. Then, by Lemma \ref{LemHur1} it holds that
	\begin{eqnarray*}
	(\tilde{h}\circ l)(\alpha) &=&\tilde{h}([\sum_\ast \lambda_{\alpha_j(1)}^{-1}*\alpha_j*\lambda_{\alpha_j(0)}])=\sum|\lambda_{\alpha_j(1)}^{-1}*\alpha_j*\lambda_{\alpha_j(0)}|\\
	&=& \sum|\lambda_{\alpha_j(1)}^{-1}+\alpha_j+\lambda_{\alpha_j(0)}|=\sum|\lambda_{\alpha_j(0)}+\alpha_j- \lambda_{\alpha_j(1)}|\\
	&=& |\sum \alpha_j+\lambda_{\sum (\alpha_j(1)-\alpha_j(0))}|=|\alpha+\lambda_{\delta \alpha}|.
	\end{eqnarray*}

Therefore, if $\alpha$ is a singular groupoid $1$-cycle it follows that $(\tilde{h}\circ l)(\alpha)=|\alpha|$. In particular, $(\tilde{h}\circ \tilde{l})(|\alpha|)=|\alpha|$, as desired.
\end{proof}

Let us now consider the notion of covering space over a groupoid as defined in \cite[s. 3.3]{MoMr} and \cite[c. G; s. 3]{BH}. A \emph{right action} of a Lie groupoid $G\rr M$ along a smooth map $p:E\to M$ is a smooth map $\cdot:E\times_MG\to E$ such that $p(e\cdot g)=s(g)$, $(e\cdot g)\cdot g'=e\cdot (gg')$ for all $(g,g')\in G^{(2)}$, and $e\cdot 1_x= e$ for all $x\in M$. In these terms, a \emph{covering space} $E$ over $G\rr M$ is a covering space $p:E\to M$ equipped with a right $G$-action $E\times_{M}G\to E$ along $p$. Morphisms between two covering spaces $E$ and $F$ over $G$ are equivariant maps $f:E\to F$. It is clear that any such morphism is necessarily a covering projection. We will denote by $\Gamma^G(E)$ the set of \emph{equivariant automorphisms} of $E$. The map $p$ extends to a Lie groupoid morphism $p:E\rtimes G\to G$ from the action groupoid $E\rtimes G$ into $G$ defined by $p(e,g)=g$, which covers the covering map $E\to M$. We say that $E$ is \emph{universal} if the action groupoid $E\rtimes G\rr E$ is $(E\rtimes G)$-connected and the fundamental groupoid $\Pi_1(E\rtimes G,e_0)$ is trivial for one (hence all) base-point $e_0\in E$. The latter kind of Lie groupoids shall be called \emph{simply $G$-connected}.

\begin{example}\label{ExaUnivesal}
	There is an explicit way to construct the universal covering space over $G\rr M$ when it is $G$-connected. Namely, the target fiber $\Pi_1(G)(-,x_0)$ at $x_0\in M$ of the fundamental groupoid $\Pi_1(G)$ of $G$ becomes a covering space over $M$ by restricting the source map $s:\Pi_1(G)(-,x_0)\to M$. In fact, this map is a left principal $\Pi_1(G, x_0)$-bundle. Also, there is a natural right $G$-action on $\Pi_1(G)(-,x_0)$ along $s$ given by $([\sigma],g)\to [\sigma g c_{s(g)}]$, where $c_{s(g)}$ is the constant map at $s(g)$,  which makes it into a covering space over $G$. This is actually a universal covering space over $G$, compare \cite[p. 612-613]{BH}.
\end{example}

It is well known that $G$-paths have a unique path lifting property, consult \cite[p. 191]{MoMr}. Let $e_0$ be a base-point in $E$, denote by $x_0 = p(e_0)$, and suppose
that $\sigma=\sigma_ng_n\sigma_{n-1}\cdots \sigma_1g_1\sigma_0$ is a $G$-path from $x_0$ to $x$. Then there are unique paths
$\tilde{\sigma_n}, \tilde{\sigma_{n-1}}, \cdots, \tilde{\sigma_0}$ in $E$ with $p_\ast(\tilde{\sigma_j}) = \sigma_j$, $\tilde{\sigma_0}(0)=e_0$ and $\tilde{\sigma_i}(0)g_j = \tilde{\sigma_{j-1}}(1)$. By setting $\tilde{\sigma_j}(0)=e_j$ and $(e_j,g_j):e_jg_j\to e_j$ the arrows in $E\rtimes G$ it follows that $\tilde{\sigma}=\tilde{\sigma_n}(e_n,g_n) \tilde{\sigma_{n-1}} \cdots (e_1,g_1)\tilde{\sigma_0}$ is the unique $(E\rtimes G)$-path starting at $e_0$ which projects onto $\sigma$. Since $G$-homotopic paths are in this way lifted to $(E\rtimes G)$-homotopic paths, it holds that any covering space over $G$ with a base-point $e_0$ as above has a natural fiber-wise action of $\Pi_1(G, x_0)$. Indeed, if $[\sigma]$ is the $G$-homotopy class of a $G$-loop at $x_0$ then we define $e_0\cdot [\sigma]:=	\tilde{\sigma_n}(1)$. This defines a right action of $\Pi_1(G, x_0)$ on $p^{-1}(x_0)$. As a consequence of the previous lifting property we get that the induced group homomorphism $\Pi_1(E\rtimes G, e_0)\to \Pi_1(G,x_0)$ is injective \cite[p. 611]{BH}. Let us assume that the action groupoid $E\rtimes G\rr E$ is $(E\rtimes G)$-connected. Thus, it is simple to check that the action of  $\Pi_1(G, x_0)$ on $p^{-1}(x_0)$ is transitive which implies that $p^{-1}(x_0)$ is homogeneous. That is, it is isomorphic to the quotient $\Pi_1(G, x_0)/p_\ast(\Pi_1(E\rtimes G, e_0))$ since the isotropy at $e_0$ agrees with $p_\ast(\Pi_1(E\rtimes G, e_0))$ by definition. Observe that every element $f\in \Gamma^G(E)$ induces an automorphism of $p^{-1}(x_0)$ when seeing it as a $\Pi_1(G, x_0)$-space. Indeed, if $F:E\rtimes G\to E\rtimes G$ is the groupoid automorphism $F(e,g)=(f(e),g)$ induced by $f$ then note that the  $(E\rtimes G)$-path $F_\ast(\tilde{\sigma})$ has initial point $f(e_0)$ and final point $f(e_0\cdot [\sigma])$ and satisfies $p_\ast(F_\ast(\tilde{\sigma}))=\sigma$, which means that it is also a lift of $\sigma$. Therefore, $f(e_0)\cdot [\sigma]=f(e_0\cdot [\sigma])$ by uniqueness. 

Just as in the classical case it follows that if our covering space $E$ over $G$ is universal then $\Gamma^G(E)\cong \Pi_1(G,x_0)$, see \cite[p. 612]{BH}. This important fact together with the previous comments allow us to prove the following result.

\begin{proposition}\label{Covering}
	If $H$ is a subgroup of $\Pi_1(G,x_0)$ then there exists a covering $r:F\to M$ over $G$ such that $r_\ast(\Pi_1(F\rtimes G, e_0))=H$.
\end{proposition}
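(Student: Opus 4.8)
The plan is to imitate the classical construction of the covering space associated to a subgroup of the fundamental group, replacing the universal cover of a space by the one produced in Example \ref{ExaUnivesal}. Let $E=\Pi_1(G)(-,x_0)$ be that universal covering space over $G$, with covering projection $p=s\colon E\to M$ and distinguished base-point $e_0=[c_{x_0}]$, the class of the constant $G$-loop at $x_0$. Recall that $p$ is a left principal $\Pi_1(G,x_0)$-bundle and that the left $\Pi_1(G,x_0)$-action (by concatenation, $[\tau]\cdot[\sigma]=[\tau\ast\sigma]$) commutes with the right $G$-action $([\sigma],g)\mapsto[\sigma g\,c_{s(g)}]$; hence left translations by elements of $\Pi_1(G,x_0)$ are equivariant automorphisms of $E$, and this is exactly the identification $\Gamma^G(E)\cong\Pi_1(G,x_0)$. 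Viewing $H\leq\Pi_1(G,x_0)$ as a subgroup of $\Gamma^G(E)$, I set $F:=H\backslash E$ and let $r\colon F\to M$ be the map induced by $p$ (well defined because every $H$-orbit lies in a single fiber of $p$), with base-point $\bar e_0$ the image of $e_0$.

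First I would check that $r\colon F\to M$ is a covering space over $G$. Every element of $\Gamma^G(E)$ is in particular a deck transformation of the ordinary covering $p\colon E\to M$, so $H$ acts freely and properly discontinuously on $E$; therefore $F$ is a (Hausdorff) manifold, the quotient map $q\colon E\to F$ is a covering projection, and $r\colon F\to M$ is a covering projection as well. Since the right $G$-action on $E$ commutes with the left $H$-action, it descends to a right $G$-action on $F$ along $r$, making $r$ a covering space over $G$, turning $q$ into a morphism of covering spaces over $G$, and inducing a surjective Lie groupoid morphism $E\rtimes G\to F\rtimes G$ covering $q$. Consequently $F\rtimes G$ is $(F\rtimes G)$-connected: given $\bar e,\bar e'\in F$, lift them to $E$, join the lifts by an $(E\rtimes G)$-path (using that $E\rtimes G$ is connected because $E$ is universal), and project this path down through $E\rtimes G\to F\rtimes G$.

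Next I would identify $r_\ast(\Pi_1(F\rtimes G,\bar e_0))$ with $H$. By the discussion preceding the statement, since $F\rtimes G$ is $(F\rtimes G)$-connected the fiber $r^{-1}(x_0)$ is homogeneous as a $\Pi_1(G,x_0)$-space, isomorphic to $\Pi_1(G,x_0)/r_\ast(\Pi_1(F\rtimes G,\bar e_0))$, with $r_\ast(\Pi_1(F\rtimes G,\bar e_0))$ equal to the stabilizer of $\bar e_0$ under the monodromy action. On the other hand, because $E$ is universal the monodromy action of $\Pi_1(G,x_0)$ on $p^{-1}(x_0)$ is free and transitive, so $p^{-1}(x_0)\cong\Pi_1(G,x_0)$ as a right $\Pi_1(G,x_0)$-set with $e_0\leftrightarrow 1$ and the monodromy becoming right multiplication; under this identification the deck action of $\Gamma^G(E)\cong\Pi_1(G,x_0)$ becomes left multiplication, and these two actions commute. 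Passing to the quotient by $H$ gives $r^{-1}(x_0)\cong H\backslash\Pi_1(G,x_0)$ as a right $\Pi_1(G,x_0)$-set, under which $\bar e_0$ corresponds to the coset $H$; its stabilizer under right multiplication is precisely $H$. Hence $r_\ast(\Pi_1(F\rtimes G,\bar e_0))=H$, as required.

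The step demanding the most care is the passage from $E$ to the quotient $F=H\backslash E$: one must be sure the elements of $\Gamma^G(E)$ genuinely behave like deck transformations — acting freely and properly discontinuously so that $F$ inherits a manifold (and covering) structure — and one must keep straight which group acts on which side, since the entire monodromy computation rests on the left deck action and the right $\Pi_1(G,x_0)$-action on the fiber commuting. Everything else is a direct transcription of the classical argument, using the groupoid path-lifting property and the structural facts about $\Pi_1(G)(-,x_0)$ recorded in Example \ref{ExaUnivesal} and in the paragraphs preceding the statement.
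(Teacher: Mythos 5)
Your proposal is correct and follows essentially the same route as the paper: take the universal covering $E$ of Example \ref{ExaUnivesal}, view $H$ inside $\Gamma^G(E)\cong\Pi_1(G,x_0)$, form the quotient $F$ with the induced $G$-action (using that equivariant automorphisms commute with the $G$-action), and identify $r_\ast(\Pi_1(F\rtimes G,\bar e_0))$ with the stabilizer of $\bar e_0$ in the fiber. The only difference is cosmetic: you spell out the final stabilizer computation via the identification $r^{-1}(x_0)\cong H\backslash\Pi_1(G,x_0)$, which the paper leaves as ``by the construction of $F$''.
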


\begin{proof}
	Let us fix a universal covering $p:E\to M$ over $G$, recall Example \ref{ExaUnivesal}. As $\Gamma^G(E)\cong \Pi_1(G,x_0)$ and $\Gamma^G(E)$ acts on the fibers $p^{-1}(x_0)$ without any fixed points then we may assume that $\Pi_1(G,x_0)$ acts on it without fixed points, compare \cite[p. 612]{BH}. Choose $e_0\in p^{-1}(x_0)$ and define the subgroup $H'$ of $\Gamma^G(E)$ as follows: $f\in H'$ if and only if there exists $[\sigma]\in H$ such that $f(e_0)=e_0\cdot [\sigma]$. It is simple to check that $H$ and $H'$ are isomorphic. As $H'$ is a subgroup of $\Gamma^G(E)$ it is actually a properly discontinuous subgroup of diffeomorphisms of $E$. Let us define $F$ as the quotient manifold $E/H'$ and denote by $q:E\to F$ the corresponding canonical projection. It is well known that this is also a covering space. Denote by $r:F\to M$ the induced smooth map defined as $r([e])=p(e)$. As $r\circ q=p$ it follows that $r$ is also a covering space. The right action of $G$ along $p:E\to M$ induces a well defined right action of $G$ along $q:E\to F$ as $[e]\cdot g:=[e\cdot g]$ which, in turn, induces another right action of $G$ along $r:F\to M$ by the same expression since the elements of $\Gamma^G(E)$ are $G$-equivariant. Therefore, we get that $\Pi_1(G,x_0)$ acts transitively on the right of the fibers $r^{-1}(x_0)$. Let $\tilde{x_0}=q(e_0)\in r^{-1}(x_0)$. Finally, by the construction of $F$ it holds that the isotropy group of the right $\Pi_1(G,x_0)$-action on $r^{-1}(x_0)$ corresponding to $\tilde{x_0}$ is precisely the subgroup $H$. That is, $r_\ast(\Pi_1(F\rtimes G, e_0))=H$.
\end{proof}

\subsection{Groupoid homology with local coefficients}

Motivated by the notion of local coefficients system for topological spaces (see for instance \cite[c. 6]{Wh}), we introduce a corresponding notion as well as its associated homology in the groupoid setting. Let $R$ be a ring. A \emph{local system of $R$-modules} over $G\rr M$ is defined as a function which assigns to any point $x_0\in M$ a left $R$-module $\mathcal{L}_x$ and to any continuous $G$-path $\sigma$ from $x$ to $y$ an $R$-homomorphism $\sigma_\#: \mathcal{L}_y\to \mathcal{L}_x$ such that the following conditions are satisfied:
\begin{itemize}
\item if $\sigma$ and $\sigma'$ are $G$-homotopic then $\sigma_\#=\sigma'_\#$,
\item if $\sigma$ is the constant $G$-path at $x_0$ then  $\sigma_\#: \mathcal{L}_{x_0}\to \mathcal{L}_{x_0}$ is the identity map, and
\item if $\sigma_0'(0)=\sigma_n(1)$ then $(\sigma\ast \sigma')_\#=\sigma_\#\circ\sigma'_\#$.
\end{itemize}

Compare with \cite{crainic,Tu}. Note that for any two point $x,y\in M$ it holds that $\mathcal{L}_x$ and $\mathcal{L}_y$ are isomorphic by $G$-connectedness. In particular, any $G$-loop $\sigma$ at $x_0$ determines an automorphism $\sigma_\#: \mathcal{L}_x\to \mathcal{L}_x$ which depends only on its $G$-homotopy class. This implies that the correspondence $[\sigma]\mapsto \sigma_\#$ induces an action of $\Pi_1(G,x_0)$ on $\mathcal{L}_{x_0}$, thus turning $\mathcal{L}_{x_0}$ into a left module over the group ring $R[\Pi_1(G,x_0)]$. A \emph{homomorphism} between two local systems of $R$-modules $\mathcal{L}$ and $\mathcal{L}'$ over $G$ is just natural transformation $\Phi:\mathcal{L}\to \mathcal{L}'$. Namely, for each $x_0\in M$ we have a homomorphism $\Phi_{x_0}:\mathcal{L}_{x_0}\to \mathcal{L}'_{x_0}$ such that for each $G$-path $\sigma$ from $x$ to $y$ it holds $(\sigma_\#)' \circ \Phi_y=\Phi_x\circ \sigma_\#$. If each $\Phi_x$ is an isomorphism then we say that $\mathcal{L}$ and $\mathcal{L}'$ are \emph{isomorphic}.

\begin{lemma}\label{LocalSystemCharacterized}
Suppose that our groupoid is $G$-connected.
\begin{itemize}
\item[i.] Let $\mathcal{L}$ and $\mathcal{L}'$ be two local systems of $R$-modules over $G\rr M$ and let $\phi: \mathcal{L}_{x_0}\to \mathcal{L}'_{x_0}$ be an isomorphism. Then there exists a unique isomorphism $\Phi:\mathcal{L}\to \mathcal{L}'$ such that $\Phi_{x_0}=\phi$.
\item[ii.] Let $\mathcal{L}_0$ be an $R$-module acted upon by $\Pi_1(G,x_0)$. Then there exists a local system of $R$-modules $\mathcal{L}$ such that $\mathcal{L}_{x_0}\cong \mathcal{L}_0$ (as $R[\Pi_1(G,x_0)]$-modules) and which induces the given action of $\Pi_1(G,x_0)$ on $\mathcal{L}_{x_0}$.
\end{itemize}
\end{lemma}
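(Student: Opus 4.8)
The plan is to mimic the classical proof that a local system is equivalent to a module with a $\pi_1$-action, but carried out with $G$-paths in place of ordinary paths, using $G$-connectedness exactly where one uses path-connectedness in the space case.

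For the first item, the idea is to propagate $\phi$ along $G$-paths. For each $x \in M$ fix, by $G$-connectedness, a $G$-path $\lambda_x$ from $x_0$ to $x$, taking $\lambda_{x_0}$ to be the constant $G$-path. Define $\Phi_x \ce (\lambda_x)_\# \circ \phi \circ ((\lambda_x)'_\#)^{-1} : \mathcal{L}_x \to \mathcal{L}'_x$; this is an isomorphism because $(\lambda_x)_\#$ and $(\lambda_x)'_\#$ are isomorphisms (their inverses being $(\lambda_x^{-1})_\#$ and $(\lambda_x^{-1})'_\#$ by the constant-path and concatenation axioms). To check naturality, given a $G$-path $\sigma$ from $x$ to $y$ one computes, using the composition axiom on $\lambda_y^{-1} * \sigma * \lambda_x$ (a $G$-loop at $x_0$), that $(\lambda_y^{-1}*\sigma*\lambda_x)_\# = \phi^{-1}\circ(\lambda_y^{-1}*\sigma*\lambda_x)'_\#\circ\phi$ would follow if $\Phi_{x_0}=\phi$ intertwined the two $\Pi_1(G,x_0)$-actions — which is exactly the claim one must verify, and it reduces to re-expanding both sides via the three local-system axioms. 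For uniqueness: if $\Psi$ is another such isomorphism, then for any $x$ and any $G$-path $\lambda$ from $x_0$ to $x$, naturality of $\Psi$ forces $\Psi_x = \lambda_\# \circ \Psi_{x_0} \circ (\lambda'_\#)^{-1} = \lambda_\# \circ \phi \circ (\lambda'_\#)^{-1} = \Phi_x$, so $\Psi=\Phi$.

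For the second item, the construction is: set $\mathcal{L}_x \ce \mathcal{L}_0$ for every $x \in M$, again fixing $G$-paths $\lambda_x$ from $x_0$ to $x$ with $\lambda_{x_0}$ constant. For a $G$-path $\sigma$ from $x$ to $y$, the concatenation $\lambda_y^{-1} * \sigma * \lambda_x$ is a $G$-loop at $x_0$; its $G$-homotopy class $[\lambda_y^{-1} * \sigma * \lambda_x] \in \Pi_1(G,x_0)$ acts on $\mathcal{L}_0$, and we declare $\sigma_\# : \mathcal{L}_y \to \mathcal{L}_x$ to be this action (or its inverse, depending on the left/right convention one fixes for the $\Pi_1(G,x_0)$-action — I would pin this down to match the earlier discussion where $[\sigma]\mapsto\sigma_\#$ was declared to give a left $R[\Pi_1(G,x_0)]$-module structure). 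One then checks the three axioms: $G$-homotopy invariance is immediate since the construction depends on $\sigma$ only through the $G$-homotopy class of the loop; the constant-path case gives the identity element of $\Pi_1(G,x_0)$, hence the identity map; and for composable $\sigma$ (from $x$ to $y$) and $\sigma'$ (from $y$ to $z$), the key identity is that $\lambda_z^{-1} * (\sigma * \sigma') * \lambda_x$ is $G$-homotopic to $(\lambda_y^{-1}*\sigma'*\lambda_z)^{-1}\cdot$-type composition — more precisely $[\lambda_z^{-1}*(\sigma*\sigma')*\lambda_x] = [\lambda_z^{-1}*\sigma'*\lambda_y]\,[\lambda_y^{-1}*\sigma*\lambda_x]$ in $\Pi_1(G,x_0)$, because inserting $\lambda_y * \lambda_y^{-1}$ between $\sigma$ and $\sigma'$ does not change the $G$-homotopy class (it is a concatenation with a $G$-loop $G$-homotopic to the constant one). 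Multiplicativity of the $\Pi_1(G,x_0)$-action then yields $(\sigma*\sigma')_\# = \sigma_\# \circ \sigma'_\#$. Finally, taking all $\lambda_{x_0}$-based loops at $x_0$ and using that $\lambda_{x_0}$ is constant, one sees $\sigma_\#$ for a $G$-loop $\sigma$ at $x_0$ is literally the given action of $[\sigma]$, so the induced action agrees with the prescribed one.

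The main obstacle — really the only non-bookkeeping point — is establishing that inserting $\lambda_y * \lambda_y^{-1}$ in the middle of a $G$-path does not alter its $G$-homotopy class, i.e. that a $G$-path of the form $\tau * (\lambda_y^{-1} * \lambda_y) * \rho$ is $G$-homotopic to $\tau * \rho$. This is the $G$-analogue of the elementary fact $\gamma * \bar\gamma \simeq \mathrm{const}$, and it should follow from the first bullet of Lemma~\ref{LemHur1} together with the definition of $G$-homotopy (a sequence of multiplication/concatenation equivalences and deformations); once this cancellation lemma is in hand, everything else is a direct application of the three defining axioms of a local system and the group structure of $\Pi_1(G,x_0)$.
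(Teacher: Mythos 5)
The paper itself gives no argument here: it simply points to Theorems 1.11 and 1.12 of \cite[p. 263]{Wh} and leaves the adaptation as an exercise, and your transport-along-chosen-$G$-paths construction is exactly that intended adaptation; the second bullet of your proposal is essentially correct in outline (modulo orientation conventions, and noting that the cancellation fact you need --- that inserting $\lambda_y\ast\lambda_y^{-1}$ does not change the $G$-homotopy class --- is just the statement that $[\lambda_y^{-1}]$ is the groupoid inverse of $[\lambda_y]$ in $\Pi_1(G)$, which the paper already has from \cite{MoMr}; Lemma \ref{LemHur1} is a statement about boundaries in $\tilde{C}_1(G)$, i.e.\ homology, and is not the right tool for a $G$-homotopy cancellation).

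There is, however, a genuine gap in your treatment of the first bullet. After defining $\Phi_x$ by transport (note also the direction: with the convention $\sigma_\#:\mathcal{L}_y\to\mathcal{L}_x$ one must take $\Phi_x=((\lambda_x)'_\#)^{-1}\circ\phi\circ(\lambda_x)_\#$, not the composition you wrote, which does not typecheck), naturality of $\Phi$ with respect to an arbitrary $G$-path $\sigma$ from $x$ to $y$ is \emph{equivalent} to the condition that $\phi$ intertwines the two $\Pi_1(G,x_0)$-actions on $\mathcal{L}_{x_0}$ and $\mathcal{L}'_{x_0}$. You notice this yourself, but then claim it ``reduces to re-expanding both sides via the three local-system axioms''; it does not. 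Equivariance of $\phi$ is an additional hypothesis, not a consequence of the axioms: any morphism of local systems restricts at $x_0$ to a $\Pi_1(G,x_0)$-equivariant map, so for a non-equivariant isomorphism $\phi$ of the fibers of a local system with nontrivial $\Pi_1$-action no extension $\Phi$ exists at all (only the uniqueness half of the bullet survives unconditionally). This is precisely why Whitehead's Theorem 1.11 assumes the fiber map is operation-preserving, and the statement of Lemma \ref{LocalSystemCharacterized} should be read with that hypothesis; your proof of the first item must either add it explicitly or it does not close. With equivariance assumed, your computation on the loop $\lambda_y^{-1}\ast\sigma\ast\lambda_x$ does go through and both existence and uniqueness follow as you indicate.
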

\begin{proof}
The proofs of these statements are straightforward adaptations of the proofs of Theorems 1.11 and 1.12 in \cite[p. 263]{Wh}, so that they are left as an exercise to the reader. 
\end{proof}
\begin{example}\label{ExampleNovikov}
Suppose that $\alpha:\mathbb{Z}[\Pi_1(G,x_0)]\to R$ is a ring homomorphism. We may view $R$ as a left $\mathbb{Z}[\Pi_1(G,x_0)]$-module with the action $[\sigma]\cdot r:= r\alpha([\sigma]^{-1})$. Such an action commutes with the standard left $R$-module structure on $R$. Therefore, by Lemma \ref{LocalSystemCharacterized} it follows that $\alpha$ determines a local system of $\mathcal{L}_\alpha$ of $R$-modules over $G\rr M$. Note that each module $(\mathcal{L}_\alpha)_x$ is isomorphic to $R$.
\end{example}

Let $\mathcal{L}$ be a local system of $R$-modules over $G$ and consider the nerve structure $\lbrace G^{(n)}, d^n_k\rbrace$ of $G$. Let $\lambda_n:G^{(n)}\to G^{(0)}$ denote the ``first vertex'' map $\lambda_n(g_1,\cdots,g_n)=s(g_n)$ with $\lambda_0=\textnormal{id}_{G^{(0)}}$. For each $n\geq 0$, we define $C^q(G^{(n)},\mathcal{L})$ as the set of all functions $c$ with the following properties:
\begin{enumerate}
\item[$a.$] for any $n$-singular $q$-simplex $\Sigma: \triangle_q\to G^{(n)}$  the value $c(\Sigma)$ is defined and belongs to the $R$-module $\mathcal{L}_{\lambda_n(\Sigma(a_0))}$. Here we assume that $a_0$ is the first vertex of the standard $q$-simplex $\triangle_q$ with vertices $a_0,a_1,\cdots, a_q$, and
\item[$b.$] the set of $n$-singular $q$-simpleces $\Sigma$ such that $c(\Sigma)\neq 0$ is finite.
\end{enumerate}

The elements of $C_q(G^{(n)},\mathcal{L})$ are called \emph{$n$-singular $q$-chains with coefficients in $\mathcal{L}$}. Note that any chain $c\in C_q(G^{(n)},\mathcal{L})$ can be formally written as a finite sum of the form $c=\sum_j v_j\cdot \Sigma_j$ where $\Sigma_j:\triangle_q\to G^{(n)}$ are $n$-singular $q$-simplices and $v_j\in \mathcal{L}_{\lambda_n(\Sigma_j(a_0))}$. That is, $c(\Sigma_j)=v_j$ and $c(\Sigma)=0$ for any $n$-singular $q$-simplex $\Sigma$ different from $\Sigma_j$. Let $c=v\cdot \Sigma$ be an elementary $n$-chain (i.e. the sum above contains only one term). In particular, $C_q(G^{(n)},\mathcal{L})$ carries a natural structure of left $R$-module.

On the one hand, we define the boundary homomorphism $\partial: C_q(G^{(n)},\mathcal{L})\to C_q(G^{(n-1)},\mathcal{L})$ as

$$\partial c=\partial(v\cdot \Sigma)=\sum_{k=0}^{n-1}(-1)^kv\cdot (d_k^n\circ \Sigma)+(-1)^{n} (g^\Sigma_n)^{-1}_\#(v)\cdot (d^n_n\circ \Sigma).$$

Here $g^\Sigma_n$ denotes the arrow determined by the $n$-projection of $\Sigma(a_0)\in G^{(n)}$ onto $G^{(1)}$ and $(g^\Sigma_n)^{-1}_\#$ is the inverse of the isomorphism $(g^\Sigma_n)_\#:\mathcal{L}_{t(g^\Sigma_n)}\to \mathcal{L}_{s(g^\Sigma_n)}$ which is induced by the arrow $g^\Sigma_n$, viewed as a $G$-path from $s(g^\Sigma_n)=\lambda_n(\Sigma(a_0))$ to $t(g^\Sigma_n)=s(g^\Sigma_{n-1})=\lambda_{n-1}(d^n_n\circ \Sigma(a_0))$. It is well known that $\partial^2=0$, compare \cite{crainic,crainicMoer, Tu}. On the other hand, we also define the boundary operator $\overline{d}: C_q(G^{(n)},\mathcal{L})\to C_{q-1}(G^{(n)},\mathcal{L})$ as
$$\overline{d}c=\overline{d}(v\cdot\Sigma)=\sigma_\#(v)\cdot d_0(\Sigma)+\sum_{j=1}^{q}(-1)^j v\cdot d_j(\Sigma).$$

In this case $d$ denotes the usual homology boundary operator and  $\sigma:[0,1]\to G^{(0)}$ is the path from $\lambda_n(\Sigma(a_1))$ to $\lambda_n(\Sigma(a_0))$ defined by $\sigma(\tau)=\lambda_n(\Sigma((1-\tau)a_1+\tau a_0))$, which has corresponding isomorphism $\sigma_\#:\mathcal{L}_{\lambda_n(\Sigma(a_0))}\to \mathcal{L}_{\lambda_n(\Sigma(a_1))}$. This operator also satisfies $\overline{d}^2=0$, see \cite[p. 266]{Wh}.

\begin{lemma}\label{LemDoubleComplex}
The boundary operators $\partial$ and $\overline{d}$ commute.
\end{lemma}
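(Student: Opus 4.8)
The plan is to check directly that $(\partial\,\overline{d} - \overline{d}\,\partial)(v\cdot\Sigma)=0$ on an elementary $n$-singular $q$-chain $c = v\cdot\Sigma$ with $\Sigma\colon\triangle_q\to G^{(n)}$, since both operators are $R$-linear and every chain is a finite sum of such. Expanding $\partial\,\overline{d}(v\cdot\Sigma)$ and $\overline{d}\,\partial(v\cdot\Sigma)$ produces a double sum indexed by a vertical index $k\in\{0,\dots,n\}$ (face maps $d_k^n$ on the nerve direction) and a horizontal index $j\in\{0,\dots,q\}$ (singular face maps $d_j$). Away from the ``extreme'' indices ($k=n$ and $j=0$), the face maps $d_k^n$ and $d_j$ commute on the nose, so those terms cancel in pairs by the usual sign bookkeeping exactly as in the proof that $d^2=0$ and $\partial^2=0$; this is the routine part. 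The content is in the interaction of the coefficient-twisting terms: the $(-1)^n(g_n^\Sigma)^{-1}_\#(v)$ appearing in $\partial$ and the $\sigma_\#(v)$ appearing in $\overline{d}$.

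Concretely, I would organize the terms that are not immediately seen to cancel into a small number of groups and verify each group vanishes by using the defining axioms of a local system (compatibility with concatenation and $G$-homotopy invariance). The key local computation is this: when one first applies $\overline{d}$ and then reads off the last-vertex arrow of $d_0(\Sigma)$ versus when one first applies $\partial$ (via $d_n^n$) and then applies $\overline{d}$, the two resulting coefficients differ by composing $(g_n^\Sigma)^{-1}_\#$ with a ``last-vertex path'' isomorphism, and the axioms force these two composites to agree because the corresponding $G$-paths in $M$ are $G$-homotopic (they are obtained from the $2$-simplex spanned by $a_0,a_1$ and the nerve face, which gives a genuine deformation between the two $G$-paths). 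In other words, the identity $\sigma_\#\circ (g_n^\Sigma)^{-1}_\# = (g_n^{d_0\Sigma})^{-1}_\#\circ \sigma'_\#$ (for the appropriate paths $\sigma,\sigma'$ obtained by restricting $\lambda_n\circ\Sigma$ to edges of $\triangle_q$) holds by $G$-homotopy invariance, and this is precisely what makes the twisted terms match up.

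I would therefore structure the write-up as: (1) reduce to elementary chains by linearity; (2) write out both double sums explicitly; (3) observe that all ``untwisted'' terms (where neither $k=n$ nor $j=0$ enters, and also the mixed terms where only one extreme index appears but no coefficient twisting is triggered) cancel exactly as in the classical Whitehead computation, citing \cite[p. 266]{Wh}; (4) isolate the finitely many ``doubly extreme'' terms involving both $(g_n^\Sigma)^{-1}_\#$ and $\sigma_\#$, and show they cancel using the local-system axioms and the $G$-homotopy between the relevant $G$-paths. The main obstacle I expect is step (4): keeping careful track of \emph{which} arrow in the nerve and \emph{which} edge of $\triangle_q$ produce the twist after each composition, and confirming that the two $G$-paths whose $\#$-maps must be identified are actually $G$-homotopic (not merely equivalent); this requires spelling out the last-vertex map $\lambda_n$ on the relevant subsimplices of $\Sigma$ and invoking the deformation coming from $\Sigma$ itself. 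Once that single compatibility is established, the sign accounting is mechanical. Given that the analogous manifold statement is in Whitehead and the untwisted combinatorics is identical, I would keep the proof short, doing the twisted terms in detail and deferring the rest to \cite{Wh} and to the known facts $\partial^2 = \overline{d}^2 = 0$.
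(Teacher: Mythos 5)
Your proposal follows essentially the same route as the paper's proof: expand both composites on an elementary chain $v\cdot\Sigma$, match the terms where the nerve face maps and singular face maps commute on the nose (using that $\lambda_{n-1}\circ d_k^n=\lambda_n$ for $k<n$ and $g_n^{d_j\Sigma}=g_n^{\Sigma}$ for $j\geq 1$), and reduce everything to the single twisted-coefficient identity $(\sigma_n)_\#\circ(g_n^\Sigma)^{-1}_\#=(g_n^{d_0\Sigma})^{-1}_\#\circ\sigma_\#$, which the paper likewise establishes by exhibiting a path in $G$ along the edge from $a_1$ to $a_0$ of $\Sigma$ and invoking the standard $G$-homotopy $\gamma(1)\sim(s\circ\gamma)^{-1}\gamma(0)(t\circ\gamma)$. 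This is correct and matches the paper's argument in both structure and the key step.
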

\begin{proof}
On the one side,
\begin{eqnarray*}
\overline{d}(\partial(c)) &= & \sum_{k=0}^{n-1}(-1)^k(\sigma_k)_\#(v)\cdot d_0(d_k^n\circ \Sigma))+ \sum_{k=0}^{n-1}\sum_{j=1}^{q}(-1)^{k+j}v\cdot d_j(d_k^n\circ \Sigma))\\
& + & (-1)^n((\sigma_n)_\#((g^\Sigma_n)^{-1}_\#(v)))(d_0(d^n_n\circ \Sigma))+\sum_{j=1}^{q}(-1)^{j+n}(g^\Sigma_n)^{-1}_\#(v)\cdot d_j(d_n^n\circ \Sigma)),
\end{eqnarray*}
where $\sigma_k(\tau)=(\lambda_{n-1}\circ d_k^n) (\Sigma((1-\tau)a_1+\tau a_0))$ for all $k=0,1,\cdots, n$. On the other side,
\begin{eqnarray*}
	\partial(\overline{d}(c)) &= & \sum_{k=0}^{n-1}(-1)^k\sigma_\#(v)\cdot d_k^n\circ d_0(\Sigma)+ \sum_{j=1}^{q}\sum_{k=0}^{n-1}(-1)^{j+k}v\cdot d_k^n\circ d_j(\Sigma)\\
	& + & (-1)^n(g^{d_0(\Sigma)}_n)^{-1}_\#(\sigma_\#(v))\cdot d^n_n\circ d_0(\Sigma)+\sum_{j=1}^{q}(-1)^{j+n}(g^{d_j(\Sigma)}_n)^{-1}_\#(v)\cdot d_n^n\circ d_j(\Sigma).
\end{eqnarray*}

Firstly, note that for all $k=0,1,\cdots, n-1$ it holds that $\lambda_{n-1}\circ d_k^n=\lambda_{n}$ so that the first two expressions in the right hand side of the above equalities agree. The second two expressions are exactly the same. Secondly, as $d_j(\Sigma)(a_0)=\Sigma(a_0)$ for all $j=1,\cdots, q$ then $g^{d_j(\Sigma)}_n=g^{\Sigma}_n$ which implies that the last expressions also agree. It remains to prove that $(\sigma_n)_\#\circ(g^\Sigma_n)^{-1}_\#=(g^{d_0(\Sigma)}_n)^{-1}_\#\circ(\sigma)_\#$. Observe that $\lambda_{n-1}(d^n_n\circ \Sigma(a_1))=\lambda_{n-1}(\Sigma(a_1))$ so that $\sigma_n\ast(g^\Sigma_n)^{-1}$ and $(g^{d_0(\Sigma)}_n)^{-1}\ast\sigma$ are two $G$-paths from $\lambda_{n-1}(\Sigma(a_1))$ to $\lambda_{n}(\Sigma(a_0))$. Define the path $\gamma:[0,1]\to G^{(1)}$ as $\gamma(\tau)=(\textnormal{pr}_n(\Sigma((1-\tau)a_1+\tau a_0)))^{-1}$. This is such that $\gamma(0)=(g^{d_0(\Sigma)}_n)^{-1}$ and $\gamma(1)=(g^\Sigma_n)^{-1}$. Also, $t\circ \gamma= \sigma$ and $s\circ \gamma= \sigma_n$. Therefore, since $\gamma(1)$ is $G$-homotopic to the $G$-path $(s\circ \gamma)^{-1} \gamma(0)(t\circ \gamma)$ (see \cite[p. 191]{MoMr}), it holds that the $G$-paths above are $G$-homotopic and the result follows as desired.
\end{proof}

The previous result motivates the following definition.

\begin{definition}
The total homology associated to the double chain complex $\lbrace C_\bullet(G^{(\bullet)},\mathcal{L}),\partial,\overline{d} \rbrace$ will be denoted by $H_\bullet^{\textnormal{tot}}(G,\mathcal{L})$ and called the \emph{groupoid homology of $G\rr M$ with local coefficients in $\mathcal{L}$}.
\end{definition}

It is clear that each $H_q^{\textnormal{tot}}(G,\mathcal{L})$ inherits a natural structure of left $R$-module. Let $\phi:(G\rr M)\to (G'\rr M')$ be a Lie groupoid morphism and suppose that $\mathcal{L}$ is a local system of $R$-modules over $G'$. By using the induced Lie groupoid morphism $\phi_\ast:(\Pi_1(G)\rr M)\to (\Pi_1(G')\rr M')$ it is possible to define another local system of $R$-modules $\phi^\ast \mathcal{L}$ over $G$ by setting $(\phi^\ast \mathcal{L})_x=\mathcal{L}_{\phi_0(x)}$. Let $c=\sum_j v_j\cdot \Sigma_j$ be an $n$-singular $q$-chain in $C_q(G^{(n)},\phi^\ast \mathcal{L})$. As $v_j\in (\phi^\ast \mathcal{L})_{\lambda_n(\Sigma_j)(a_0)}=\mathcal{L}_{\lambda'_n(\phi_n\circ \Sigma_j)(a_0)}$ then we have a well defined map $\phi_\ast:C_q(G^{(n)},\phi^\ast \mathcal{L})\to C_q(G'^{(n)},\mathcal{L})$ given as $\phi_\ast(\sum_j v_j\cdot \Sigma_j)=\sum_j v_j\cdot (\phi_n\circ\Sigma_j)$. It is simple to check that $\phi_\ast$ is a homomorphism of left $R$-modules which commutes with both pairs of boundary operators $\partial,\overline{d}$ and  $\partial',\overline{d}'$ so that we have a well defined homomorphism of left $R$-modules $\phi_\ast:H_\bullet^{\textnormal{tot}}(G,\phi^\ast \mathcal{L})\to H_\bullet^{\textnormal{tot}}(G',\mathcal{L})$. It is well known that if $\phi$ is a Morita map then horizontal homologies defined by $\partial$ and $\partial'$ are isomorphic, see \cite{crainic,crainicMoer} and \cite[p. 214]{MoMr}. Thus, by using a usual argument of spectral sequences we conclude that:
\begin{proposition}\label{MoritaLocal}
A Morita map $\phi:G\to G'$ induces an isomorphism $\phi_\ast:H_\bullet^{\textnormal{tot}}(G,\phi^\ast \mathcal{L})\to H_\bullet^{\textnormal{tot}}(G',\mathcal{L})$.
\end{proposition}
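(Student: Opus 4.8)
The plan is to deduce Proposition \ref{MoritaLocal} from the corresponding Morita invariance statement for the horizontal (simplicial) homology of the nerve, together with a comparison of spectral sequences. Concretely, given a Morita map $\phi:G\to G'$ and a local system $\mathcal{L}$ of $R$-modules over $G'$, I first form the double complex $C_\bullet(G^{(\bullet)},\phi^\ast\mathcal{L})$ and $C_\bullet(G'^{(\bullet)},\mathcal{L})$ and recall from the discussion preceding the statement that the chain map $\phi_\ast$ is a morphism of double complexes, i.e.\ it commutes with both $\partial$ and $\overline{d}$ (and with $\partial'$, $\overline{d}'$). The idea is then to filter each total complex by the simplicial degree $n$, producing first-quadrant spectral sequences $E^\bullet(G)$ and $E^\bullet(G')$, with $\phi_\ast$ inducing a morphism between them; a routine convergence argument (see \cite[p. 108]{BT}) then reduces the claim to showing that $\phi_\ast$ is an isomorphism on the $E^1$ (or $E^2$) page.

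The key step is to identify the $E^1$ page. First I would fix the simplicial degree $n$ and observe that $\overline d$ is (up to the twisting isomorphism $\sigma_\#$) the ordinary singular boundary operator on $G^{(n)}$ with coefficients in the pulled-back local system $\lambda_n^\ast \mathcal{L}$ on $G^{(n)}$; hence the vertical homology $H_q\big(C_\bullet(G^{(n)},\mathcal{L}),\overline d\big)$ is just the ordinary singular homology $H_q(G^{(n)},\lambda_n^\ast\mathcal{L})$ with local coefficients, and similarly for $G'$. Next I would note that because $\phi$ is a Morita map, each face $\phi_n:G^{(n)}\to G'^{(n)}$ of the induced simplicial map is a weak homotopy equivalence (a surjective submersion with contractible — indeed homeomorphic, fibre-wise — fibres, or more precisely the nerves are levelwise weakly equivalent), so $\phi_n$ induces an isomorphism on singular homology with local coefficients, compatibly with the simplicial face maps $\partial$. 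Thus $\phi_\ast$ is an isomorphism of the horizontal complexes $\big(H_q(G^{(\bullet)},\lambda^\ast_\bullet\mathcal{L}),\partial_\ast\big)$, which are precisely the $E^1$-pages; taking horizontal homology gives an isomorphism on $E^2$, and convergence finishes the proof.

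The main obstacle I anticipate is the second half of the previous paragraph: making precise and justifying that a Morita map $\phi$ induces, for every $n$, an isomorphism on singular homology with \emph{local} (not just constant) coefficients, compatibly with all the simplicial structure. For constant coefficients this is exactly the statement that Morita maps induce isomorphisms on the horizontal homology of the nerve, which the excerpt already cites from \cite{crainic,crainicMoer} and \cite[p. 214]{MoMr}; the twisted-coefficient version should follow by the same mechanism, since $\lambda_n^\ast\mathcal{L}$ is pulled back from $M$ (resp. $M'$) and $\phi_n$ covers the Morita map on objects, but one must check that the identification of $\overline d$-homology with twisted singular homology is natural enough that $\partial_\ast$ really is the simplicial differential on these groups. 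I would handle this by invoking \cite{Wh} for the functoriality of homology with local coefficients and then citing the constant-coefficient Morita invariance results above essentially verbatim, remarking that all arguments localize. Once that naturality is in hand, the spectral sequence comparison is entirely standard, so I would present it briefly rather than in full detail.
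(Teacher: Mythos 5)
Your scaffolding (a morphism of double complexes plus a spectral-sequence comparison, convergence via \cite[p.~108]{BT}) is the same as the paper's, but you filter in the wrong direction, and the key step in your chosen filtration is false. Filtering by the simplicial degree $n$ makes your $E^1$-page the vertical ($\overline d$-) homology, i.e.\ the singular homology of each space $G^{(n)}$ with coefficients in $\lambda_n^\ast\mathcal{L}$, and you then claim that a Morita map induces weak homotopy equivalences $\phi_n:G^{(n)}\to G'^{(n)}$ levelwise (``surjective submersion with contractible fibres''). A Morita map is only essentially surjective and fully faithful; its levelwise maps on the nerve need not even be surjective, let alone weak equivalences. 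For instance, the \v{C}ech groupoid of an open cover $\{U_i\}$ of $M$ maps Morita-equivalently to the unit groupoid $M\rr M$, yet already at level $0$ the map $\coprod_i U_i\to M$ is not a weak equivalence; similarly, the inclusion of the trivial groupoid over a point into the translation groupoid $K\ltimes K\rr K$ of a non-contractible Lie group $K$ is a Morita map with $G^{(n)}=\mathrm{pt}$ and $G'^{(n)}\cong K^{n+1}$ for every $n$. In such examples the two $E^1$-pages of your filtration are genuinely different, so the comparison cannot start; agreement only appears after also taking $\partial$-homology. You also conflate the cited invariance results with a levelwise statement: what is Morita invariant in \cite{crainic,crainicMoer} and \cite[p.~214]{MoMr} is the homology in the simplicial direction, i.e.\ of the complexes $\bigl(C_q(G^{(\bullet)},\phi^\ast\mathcal{L}),\partial\bigr)$ for fixed $q$, not the singular homology of the individual nerve spaces $G^{(n)}$.

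The paper's proof is the other filtration: after checking (as you do) that $\phi_\ast$ commutes with both pairs of differentials, one takes $\partial$-homology first, invokes precisely the quoted Morita invariance of that horizontal homology with coefficients, and then applies the standard first-quadrant comparison argument of \cite[p.~108]{BT} (a map of double complexes inducing an isomorphism on one of the two directional homologies induces an isomorphism on total homology). If you swap your filtration accordingly and replace the levelwise weak-equivalence claim by the cited invariance of the $\partial$-homology, your argument becomes the paper's; as written, the gap is essential, not a matter of naturality bookkeeping.
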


Another way to prove the previous result is commented in Remark \ref{MoritaInvariance&CohomologyVersion} below. Let us fix a universal covering $p:E\to M$ over $G$ and consider the corresponding action groupoid $E\rtimes G\rr E$, see Example \ref{ExaUnivesal}. Recall that  $p$ extends to a Lie groupoid morphism $p:E\rtimes G\to G$, defined by $p(e,g)=g$, which covers the covering map $E\to M$. Pick $x_0\in M$. We know that $\Gamma^G(E)\cong \Pi_1(G,x_0)$ where the isomorphism is as follows. The group $\Gamma^G(E)$ acts simply transitively on the fiber $p^{-1}(x_0)$. Take $e_0\in p^{-1}(x_0)$. Given $f\in \Gamma^G(E)$, let $\tilde{\sigma}$ be a $E\rtimes G$-path joining $f(e_0)$ with $e_0$. Define the map $\beta:\Gamma^G(E)\to \Pi_1(G,x_0)$ as $\beta(f)=[p_\ast(\tilde{\sigma})]$. Observe that $\beta$ does not depend on $\tilde{\sigma}$ since $\Pi_1(E\rtimes G,e_0)$ is trivial. This is the isomorphism we are interested in. For every $[\sigma]\in \Pi_1(G,x_0)$ we denote by $\beta_{[\sigma]}=\beta^{-1}([\sigma])$ the corresponding element in $\Gamma^G(E)$. That is, $[\sigma]$ can be presented by $p_\ast(\tilde{\sigma})$ where $\tilde{\sigma}$ is any $E\rtimes G$-path from $e_0$ to $\beta_{[\sigma]}(e_0)$.

We also consider the nerve structure $\lbrace E^{(n)}, (d^n_k)^E\rbrace$ of the action groupoid $E\rtimes G\rr E$
 as well as its total singular groupoid homology. The group $\Gamma^G(E)$ acts naturally on $C_q(E^{(n)},\mathbb{Z})$ as follows. Let $f_1:E\rtimes G\to  E\rtimes G$ be the groupoid automorphism $f_1(e,g)=(f_0(e),g)$ induced by $f_0=f\in \Gamma^G(E)$. So, $f\cdot \Sigma = f_n\circ \Sigma$ for each $\Sigma:\triangle_q\to E^{(n)}$ where $f_n:E^{(n)}\to E^{(n)}$ is the induced map along the nerve. Let $\mathcal{L}$ be a local system of $R$-modules over $G$. Denote by $\mathcal{L}_0:=\mathcal{L}_{x_0}$. It is clear that $\Pi_1(G,x_0)$ acts on $\mathcal{L}_0$ from the left. We transform this action into a right action of $\Gamma^G(E)$ on $\mathcal{L}_0$ by setting $v\cdot \beta_{[\sigma]}=(\sigma_\#)^{-1}(v)$. Let us denote by $\mathcal{L}_0\otimes_{\Gamma^G(E)} C_q(E^{(n)},\mathbb{Z})$  the quotient group of the tensor product $\mathcal{L}_0\otimes C_q(E^{(n)},\mathbb{Z})$ by the subgroup $Q_q(\mathcal{L}_0,E^{(n)})$ generated by all elements of the form $v\cdot f\otimes \Sigma - v\otimes f\cdot \Sigma$ with $v\in \mathcal{L}_0$, $f\in \Gamma^G(E)$, and $\Sigma\in C_q(E^{(n)},\mathbb{Z})$. Observe that, after naturally extending them to the tensor product, the simplicial boundary operator $\partial$ maps $Q_q(\mathcal{L}_0,E^{(n)})$ onto $Q_{q}(\mathcal{L}_0,E^{(n-1)})$ and the homology boundary operator $d$ maps $Q_q(\mathcal{L}_0,E^{(n)})$ onto $Q_{q-1}(\mathcal{L}_0,E^{(n)})$ so that they pass to the quotient, thus giving rise to well defined boundary operators $\partial:\mathcal{L}_0\otimes_{\Gamma^G(E)} C_q(E^{(n)},\mathbb{Z})\to \mathcal{L}_0\otimes_{\Gamma^G(E)} C_q(E^{(n-1)},\mathbb{Z})$ and $d:\mathcal{L}_0\otimes_{\Gamma^G(E)} C_q(E^{(n)},\mathbb{Z})\to \mathcal{L}_0\otimes_{\Gamma^G(E)} C_{q-1}(E^{(n)},\mathbb{Z})$. Hence, we have obtained a double chain complex $\lbrace\mathcal{L}_0\otimes_{\Gamma^G(E)} C_\bullet(E^{(\bullet)},\mathbb{Z}),\partial,d\rbrace$ whose associated total cohomology will be denoted by $H_\bullet(\mathcal{L}_0\otimes_{\Gamma^G(E)}E,\mathbb{Z})$. 
 
 We are now in conditions to prove our $G$-version of the well known Eilenberg homology isomorphism, namely:

\begin{proposition}[Eilenberg $G$-isomorphism]\label{EilenbergProp}
There exists a chain isomorphism between $\mathcal{L}_0\otimes_{\Gamma^G(E)} C_\bullet(E^{(\bullet)},\mathbb{Z})$ and $C_\bullet(G^{(\bullet)},\mathcal{L})$. In particular, the total homologies $H_\bullet(\mathcal{L}_0\otimes_{\Gamma^G(E)}E,\mathbb{Z})$ and $H_\bullet^{\textnormal{tot}}(G,\mathcal{L})$ are isomorphic. 
\end{proposition}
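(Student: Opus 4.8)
The plan is to transcribe the classical construction of the Eilenberg chain isomorphism (see \cite[c.~6]{Wh}) into the groupoid setting, with ``lifts of singular simplices to the universal cover'' replaced by ``lifts to the nerve of the action groupoid $E\rtimes G$'' and ``transport along paths'' replaced by ``transport along canonical $G$-homotopy classes of $G$-paths''. Concretely, I would exhibit mutually inverse maps $\Psi_{q,n}\colon\mathcal{L}_0\otimes_{\Gamma^G(E)}C_q(E^{(n)},\mathbb{Z})\to C_q(G^{(n)},\mathcal{L})$ and $\Phi_{q,n}$, and then verify that $\Psi$ intertwines the pair of boundary operators $(\partial,\overline{d})$ on $C_\bullet(G^{(\bullet)},\mathcal{L})$ with the pair $(\partial,d)$ on $\mathcal{L}_0\otimes_{\Gamma^G(E)}C_\bullet(E^{(\bullet)},\mathbb{Z})$; passing to the associated total complexes then yields the asserted isomorphism of total homologies.

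First I would record the geometric input. The Lie groupoid morphism $p\colon E\rtimes G\to G$ induces, for each $n$, a map $p_n\colon E^{(n)}\to G^{(n)}$ on nerves; since $E^{(n)}$ is the pullback of the covering $p\colon E\to M$ along $(g_1,\dots,g_n)\mapsto t(g_1)$, the map $p_n$ is itself a covering. An element $f\in\Gamma^G(E)$ acts on $E^{(n)}$ through the induced nerve map $f_n$, and because $\Gamma^G(E)$ acts simply transitively on the fibres of $p\colon E\to M$ (see Example \ref{ExaUnivesal}) it acts simply transitively on the fibres of each $p_n$. As $\triangle_q$ is contractible, every $q$-simplex $\Theta\colon\triangle_q\to G^{(n)}$ lifts to $E^{(n)}$, the set of its lifts is a single free $\Gamma^G(E)$-orbit, and consequently $C_q(E^{(n)},\mathbb{Z})$ is a free $\mathbb{Z}[\Gamma^G(E)]$-module on any choice of one lift per $q$-simplex of $G^{(n)}$. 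Finally, writing $\lambda^E_n\colon E^{(n)}\to E$ for the last-vertex map of $E\rtimes G$ (so that $p\circ\lambda^E_n=\lambda_n\circ p_n$) and using that $E\rtimes G$ is simply $G$-connected, for each $e\in E$ there is a unique $G$-homotopy class of $(E\rtimes G)$-path from $e_0$ to $e$; its image under $p$ is a well-defined $G$-homotopy class $[\tau_e]$ of $G$-path from $x_0$ to $p(e)$, hence an isomorphism $(\tau_e)_\#\colon\mathcal{L}_{p(e)}\to\mathcal{L}_{x_0}=\mathcal{L}_0$. The transformation rule I would read off from the definition of $\beta$ and Example \ref{ExaUnivesal} is $[\tau_{f(e)}]=[\sigma]\ast[\tau_e]$ whenever $f=\beta_{[\sigma]}$.

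With this in hand I set $\Psi(v\otimes\Sigma):=(\tau_e^{-1})_\#(v)\cdot(p_n\circ\Sigma)$ on an elementary chain, where $e:=\lambda^E_n(\Sigma(a_0))$, and $\Phi(w\cdot\Theta):=(\tau_e)_\#(w)\otimes\tilde{\Theta}$ for any chosen lift $\tilde{\Theta}$ of $\Theta$, with $e:=\lambda^E_n(\tilde{\Theta}(a_0))$. That $\Psi$ annihilates the subgroup $Q_q(\mathcal{L}_0,E^{(n)})$, and that $\Phi(w\cdot\Theta)$ is independent of the chosen lift, both reduce to the identity $(\tau_{f(e)}^{-1})_\#=(\tau_e^{-1})_\#\circ(\sigma_\#)^{-1}$ combined with the defining formula $v\cdot\beta_{[\sigma]}=(\sigma_\#)^{-1}(v)$ of the right $\Gamma^G(E)$-action on $\mathcal{L}_0$. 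Then $\Phi\circ\Psi=\textnormal{id}$ and $\Psi\circ\Phi=\textnormal{id}$ follow directly (taking $\Sigma$ itself as the lift when composing), so $\Psi$ is an isomorphism in every bidegree.

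The remaining task is compatibility with the boundary operators, which I would organize according to which faces ``move'' the last-vertex point $\lambda^E_n$ at the first vertex, the one feeding the transport $[\tau_e]$. For $d^n_k$ with $k<n$ and for the singular faces $d_j$ with $j\geq1$ this point is unchanged, hence so is $[\tau_e]$, and the corresponding terms match those of $\Psi\circ\partial$ and $\Psi\circ d$ term by term (using that $p$ commutes with all face maps of the nerves). For $d^n_n$ the relevant point changes from $e$ to $e''$ with $e=e''\cdot g^\Sigma_n$, and appending the arrow $(e'',g^\Sigma_n)$ of $E\rtimes G$---whose $p$-image is $g^\Sigma_n$---to the canonical path to $e$ gives $[\tau_{e''}]=[\tau_e]\ast[g^\Sigma_n]$; this produces exactly the twist $(g^\Sigma_n)^{-1}_\#$ occurring in the definition of $\partial$ on $C_\bullet(G^{(\bullet)},\mathcal{L})$. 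For $d_0$ the first vertex moves from $a_0$ to $a_1$ along the path $\tau\mapsto\lambda^E_n(\Sigma((1-\tau)a_0+\tau a_1))$ in $E$, which covers $\sigma^{-1}$, giving $[\tau_{e'}]=[\tau_e]\ast[\sigma^{-1}]$ and hence precisely the twist $\sigma_\#$ in $\overline{d}$. Therefore $\Psi$ is an isomorphism of double complexes, and passing to the associated total complexes yields $H_\bullet(\mathcal{L}_0\otimes_{\Gamma^G(E)}E,\mathbb{Z})\cong H^{\textnormal{tot}}_\bullet(G,\mathcal{L})$. I expect the one genuinely delicate point to be the bookkeeping: pinning down orientation and ordering conventions so that the twists and signs created by the moving faces $d^n_n$ and $d_0$ match $(g^\Sigma_n)^{-1}_\#$, $\sigma_\#$ and the alternating signs in $\partial$ and $\overline{d}$; the topological content---$p_n$ a covering with $\Gamma^G(E)$ acting simply transitively on fibres, and transport governed by the pushed-forward canonical paths---is light.
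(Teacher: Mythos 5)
Your proposal is correct and takes essentially the same route as the paper: define the comparison map by transporting coefficients along the (unique up to homotopy, by simple $G$-connectedness) $(E\rtimes G)$-paths to the base point, check it annihilates $Q_q(\mathcal{L}_0,E^{(n)})$, and verify compatibility with both boundary operators, the only twisted cases being $d_0$ (yielding $\sigma_\#$) and $d^n_n$ (yielding $(g^\Sigma_n)^{-1}_\#$). The only cosmetic difference is that you construct the inverse explicitly by lifting simplices along the covering $p_n:E^{(n)}\to G^{(n)}$, whereas the paper obtains the isomorphism by invoking the analogue of Whitehead's Theorem 3.4.
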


\begin{proof}
Let us first exhibit isomorphisms $\tilde{p}: \mathcal{L}_0\otimes_{\Gamma^G(E)} C_q(E^{(n)},\mathbb{Z})\to C_q(G^{(n)},\mathcal{L})$ for each $q,n\geq 0$. It is clear that the simply $E\rtimes G$-connectedness implies that for every $e\in E$ there exists a unique $E\rtimes G$-homotopy class $[\xi_{e}]$ presented by any $E\rtimes G$-path $\xi_{e}$ from $e$ to $e_0$. Denote by $p_n:E^{(n)}\to G^{(n)}$ the map induced by $p$ along the nerves. Let $v_0\in \mathcal{L}_0$, $\Sigma:\triangle_q\to E^{(n)}$, $\Lambda=p_n\circ \Sigma$, $e=\lambda^{E}_n(\Sigma(a_0))$, $x=\lambda^{G}_n(\Lambda(a_0))=p(e)$ and define $\tilde{p}_o: \mathcal{L}_0\otimes C_q(E^{(n)},\mathbb{Z})\to C_q(G^{(n)},\mathcal{L})$ by 
$$\tilde{p}_o(v_0\otimes \Sigma )= v\cdot \Lambda,$$ 
where $v=(p_\ast(\xi_{e}))_\#(v_0)\in \mathcal{L}_x$. Analogously to how it was proven in Theorem 3.4 from \cite[p. 279]{Wh} it is simple to check that $\tilde{p}_o Q_q(\mathcal{L}_0,E^{(n)})=0$, so that $\tilde{p}_o$ induces a homomorphism $\tilde{p}: \mathcal{L}_0\otimes_{\Gamma^G(E)} C_q(E^{(n)},\mathbb{Z})\to C_q(G^{(n)},\mathcal{L})$ which is actually an isomorphism. It remains to verify that these maps determine a chain map between the double chain complexes involved.

Firstly, recall that $d_j(\Sigma(a_0))=\Sigma(a_0)$ for all $1\leq j\leq q$ so that
$$\tilde{p}_o(d_j(v_0\otimes \Sigma))=\tilde{p}_o(v_0\otimes d_j(\Sigma))=v\cdot (p_n\circ (d_j(\Sigma)))=v\cdot d_j(\Lambda)=\overline{d}_j(\tilde{p}_o(v_0\otimes \Sigma)).$$

As usual, denote by $\sigma(\tau)=\lambda_n^E(\Sigma((1-\tau)a_1+\tau a_0))$. Note that $e'=\lambda_n^E(d_0(\Sigma)(a_0))=\lambda_n^E(\Sigma(a_1))$, thus obtaining $\xi_{e'}=\sigma\ast \xi_{e}$. Hence
$$\tilde{p}_o(d_0(v_0\otimes \Sigma))=\tilde{p}_o(v_0\otimes d_0(\Sigma))=v'\cdot (p_n\circ (d_0(\Sigma)))=v'\cdot d_0(\Lambda),$$
where
$$v'=(p_\ast(\xi_{e'}))_\#(v_0)=(p_\ast(\sigma)\ast p_\ast(\xi_{e}))_\#(v_0)=(p_\ast(\sigma))_\# (v).$$

But $p\circ \lambda_n^E=\lambda_n^G\circ p_n$ so that $p_\ast(\sigma)(\tau)=\lambda_n^G(\Lambda((1-\tau)a_1+\tau a_0))$. That is, $\tilde{p}_o(d_0(v_0\otimes \Sigma))=\overline{d}_0(\tilde{p}_o(v_0\otimes \Sigma))$. 

Secondly, observe that for $ 0\leq k\leq n-1$ we have
\begin{eqnarray*}
\tilde{p}_o(\partial_k^E(v_0\otimes \Sigma)) &=& \tilde{p}_o(v_0\otimes (d_k^n)^E\circ \Sigma)=v\cdot (p_{n-1}\circ (d_k^n)^E\circ \Sigma)\\
&=&v\cdot ((d_k^n)^G\circ p_{n}\circ \Sigma)=v\cdot (d_k^n)^G\circ\Lambda=\partial_k^G(\tilde{p}_o(v_0\otimes \Sigma)).
\end{eqnarray*}

We denote by $e^\Sigma_n$ the arrow determined by the $n$-projection of $\Sigma(a_0)\in E^{(n)}$ onto $E\rtimes G$. Observe that $e'=\lambda_{n-1}^E((d_n^n)^E\circ\Sigma)(a_0))=s(e^\Sigma_{n-1})=t(e^\Sigma_n)$ which implies that $\xi_{e'}=(e^\Sigma_n)^{-1}\ast \xi_{e}$. Thus,
$$\tilde{p}_o(\partial_n^E(v_0\otimes \Sigma))=\tilde{p}_o(v_0\otimes (d_n^n)^E\circ(\Sigma))=v'\cdot (p_{n-1}\circ ((d_n^n)^E\circ(\Sigma)))=v'\cdot (d_n^n)^G\circ\Lambda,$$
where
$$v'=(p_\ast(\xi_{e'}))_\#(v_0)=(p_\ast((e^\Sigma_n)^{-1})\ast p_\ast(\xi_{e}))_\#(v_0)=(p(e^\Sigma_n)^{-1})_\# (v)=((g^\Lambda_n)^{-1})_\# (v),$$
since $p\circ \textnormal{pr}_n^E=\textnormal{pr}_n^G\circ p_n$. In consequence, $\tilde{p}_o(\partial_n^E(v_0\otimes \Sigma))=\partial_n^G(\tilde{p}_o(v_0\otimes \Sigma))$. This completes the proof.
\end{proof}

We finish this section by commenting that:

\begin{remark}\label{MoritaInvariance&CohomologyVersion}
Firstly, this Eilenberg $G$-isomorphism also implies that the total groupoid homology with local coefficients in $\mathcal{L}$ is Morita invariant since the usual total singular groupoid homology is Morita invariant, consult \cite{Be}. Secondly, it is worth mentioning that similarly to how they were defined the total homology groups $H_\bullet^{\textnormal{tot}}(G,\mathcal{L})$ it is possible to define total cohomology groups $H^\bullet_{\textnormal{tot}}(G,\mathcal{L})$ as well as to prove a $G$-version of the Eilenberg cohomology isomorphism. This can be done by mimicking our approach together with the classical constructions, see \cite[p. 14]{F}. 
\end{remark}

\section{Closed basic 1-forms}\label{S:3}

The aim of this section is to introduce closed basic $1$-forms on Lie groupoids and study some of their properties. In particular, closed basic $1$-forms of Morse type are defined. We say that a Lie groupoid $G\rr M$ is \emph{proper} if the source/target map $(s,t):G\to M\times M$ is proper. In this case the groupoid orbits $\mathcal{O}_x$ are embedded in $M$, the isotropy groups $G_x$ are compact, and the orbit space $M/G$ is Hausdorff, second-countable, and paracompact \cite{dH}. Let $G\rr M$ be a proper groupoid and denote by $X=M/G$ its corresponding orbit space. A differential form $\omega$ on $M$ is said to be \emph{basic} if $s^\ast \omega=t^\ast \omega$, consult \cite{PPT,Wa}. The set of basic forms will be denoted by $\Omega_{\textnormal{bas}}^\bullet(G)$. It is clear that the de Rham exterior differential on $\Omega^\bullet(M)$ restricts to $\Omega_{\textnormal{bas}}^\bullet(G)$, thus yielding the so-called \emph{basic cohomology} $H_{\textnormal{bas}}^\bullet(G,\mathbb{R})$ of $G$. Such a cohomology is Morita invariant and also satisfies that $H^\bullet(X,\mathbb{R})\cong H_{\textnormal{bas}}^\bullet(G,\mathbb{R})$, where $H^\bullet(X,\mathbb{R})$ denotes the singular cohomology of $X$. Furthermore,  $X$ can be triangulated so that the basic cohomology $H_{\textnormal{bas}}^\bullet(G,\mathbb{R})$ becomes a finite dimensional vector space \cite{PPT}.

\subsection{G-homomorphisms of periods}

Let $\omega$ be a closed basic $1$-form on $G$ and let $\xi$ denote the basic cohomology class $[\omega]\in H_{\textnormal{bas}}^1(G,\mathbb{R})$. We are interested in studying some features of $\xi$ by defining its $G$-homomorphism of periods as well as its corresponding covering space. In order to do so we will make use of the topological ingredients described in the previous section. For each smooth $G$-path $\sigma=\sigma_ng_n\sigma_{n-1}\cdots \sigma_1g_1\sigma_0$ in $M$ we define the $G$-path integral
$$\int_{\sigma}\omega=\sum_{k=0}^{n}\int_{\sigma_k}\omega.$$
\begin{lemma}\label{Lem1}
Let $[\sigma]$ denote the $G$-homotopy class of $\sigma$. Then the expression 
$$\int_{[\sigma]}\omega=\int_{\sigma}\omega,$$
is well defined. 
\end{lemma}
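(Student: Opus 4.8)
The plan is to show that $\int_\sigma\omega$ is invariant under each of the elementary moves generating the $G$-homotopy relation: the multiplication equivalence, the concatenation equivalence, and deformations. Since any $G$-homotopy between $\sigma$ and $\sigma'$ is by definition a finite sequence of such moves, invariance under all of them yields $\int_\sigma\omega=\int_{\sigma'}\omega$, which is exactly the assertion.

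The two equivalences are disposed of at once. The line integral of a $1$-form over a constant path is $0$, so deleting a constant factor $\sigma_j$ and composing the adjacent arrows leaves the sum $\sum_k\int_{\sigma_k}\omega$ unchanged; this covers the multiplication equivalence. For the concatenation equivalence, when $g_j=1_{\alpha_{j-1}(1)}$ the factors $\sigma_j,\sigma_{j-1}$ get replaced by the ordinary concatenation $\sigma_j\cdot\sigma_{j-1}$, and additivity of the line integral along concatenated paths gives $\int_{\sigma_j\cdot\sigma_{j-1}}\omega=\int_{\sigma_j}\omega+\int_{\sigma_{j-1}}\omega$, so once more the total is preserved.

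The essential case is a deformation $D_n(\tau,\cdot)d_n(\tau)\cdots d_1(\tau)D_0(\tau,\cdot)$ from $\sigma$ to $\sigma'$, of common order $n$. After replacing the continuous data $D_j,d_j$ by piecewise smooth data (possible without changing the $G$-homotopy class, and harmless for the integrals), I apply Stokes' theorem to each square $D_j\colon[0,1]_\tau\times[0,1]_u\to M$. Writing $a_j(\tau)=D_j(\tau,0)$ and $b_j(\tau)=D_j(\tau,1)$ for the $\tau$-traces of the two endpoints of the deforming path, and using $d\omega=0$, Stokes gives
\[
\int_{\sigma_j}\omega-\int_{\sigma_j'}\omega=\int_{a_j}\omega-\int_{b_j}\omega,\qquad j=0,\dots,n.
\]
Summing over $j$ and using that $a_0\equiv x$ and $b_n\equiv y$ are constant (so the corresponding integrals vanish), the right-hand side rearranges as $\sum_{j=1}^{n}\bigl(\int_{a_j}\omega-\int_{b_{j-1}}\omega\bigr)$. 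The compatibility conditions in the definition of a deformation say precisely that $a_j=t\circ d_j$ and $b_{j-1}=s\circ d_j$, so each summand equals $\int_{d_j}(t^\ast\omega-s^\ast\omega)=0$ because $\omega$ is basic. Hence $\sum_k\int_{\sigma_k}\omega=\sum_k\int_{\sigma_k'}\omega$, i.e. $\int_\sigma\omega=\int_{\sigma'}\omega$.

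I expect the only delicate points to lie in the deformation step: keeping the boundary orientations in Stokes' theorem consistent so that the endpoint-trace terms cancel with the right signs, and stating the (routine) reduction to piecewise smooth deformations so that the integrals make sense. The equivalence moves and the telescoping are pure bookkeeping, and the basic condition $s^\ast\omega=t^\ast\omega$ together with the closedness $d\omega=0$ are the only two hypotheses that actually get used.
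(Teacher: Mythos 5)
Your argument is correct, and it differs from the paper's in the way it handles the deformation step. The paper treats the equivalence moves as immediate (as you do, via vanishing of the integral over constant paths and additivity under concatenation), but for a deformation $D(\tau,\cdot)=D_n(\tau,\cdot)d_n(\tau)\cdots d_1(\tau)D_0(\tau,\cdot)$ it sets $I_\tau=\sum_k\int_0^1\omega_{D_k(\tau,\nu)}(D_k'(\tau,\nu))\,d\nu$ and shows $\tfrac{d}{d\tau}I_\tau=0$ by a local-coordinates computation ``as in the classical case'', invoking closedness, basicness, and the compatibility identities $D_k(\tau,0)=t(d_k(\tau))$, $D_{k-1}(\tau,1)=s(d_k(\tau))$, $D_0(\cdot,0)=x$, $D_n(\cdot,1)=y$. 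You instead apply Stokes' theorem to each square $D_j$, producing the boundary identity $\int_{\sigma_j}\omega-\int_{\sigma_j'}\omega=\int_{a_j}\omega-\int_{b_j}\omega$, and then telescope: the constancy of $a_0$ and $b_n$ kills the end terms, and the compatibility conditions turn each remaining difference into $\int_{d_j}(t^\ast\omega-s^\ast\omega)=0$ by basicness. The two proofs use exactly the same hypotheses ($d\omega=0$ for the interior of each square, $s^\ast\omega=t^\ast\omega$ for the arrow traces), but yours is the integrated, global version and has the advantage of making the cancellation along the arrow deformations completely explicit, whereas the paper's is the infinitesimal version whose details are left to the reader; your explicit remark about reducing continuous deformations to piecewise smooth ones fills a point the paper passes over by simply assuming the $G$-homotopy smooth.
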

\begin{proof}
Let us pick another $G$-path $\sigma'$ being $G$-homotopic to $\sigma$. If $\sigma$ and $\sigma'$ are equivalent then the assertion is trivial. Suppose then that there is a smooth $G$-homotopy $D(\tau,\cdot)=D_n(\tau,\cdot)d_n(\tau)\cdots d_1(\tau)D_0(\tau,\cdot)$ from $\sigma$ to $\sigma'$ with fixed endpoints $x$ and $y$. It suffices to check that the expression 
$$I_\tau=\int_{D(\tau,\cdot)}\omega= \sum_{k=0}^{n}\int_{0}^{1}\omega_{D_k(\tau,\nu)}(D_k'(\tau,\nu))d\nu,$$
does not depend on $\tau$ by differentiating it with respect to $\tau$. However, this computation can be verified in local coordinates as in the classical case by using the fact that $\omega$ is a closed basic $1$-form and the following identities are satisfied: $D_k(\tau,0)=t(d_k(\tau))$, $D_{k-1}(\tau,1)=s(d_k(\tau))$ for all $k=1,\cdots,n$ and $D_0(\tau,0)=x$, $D_n(\tau,1)=y$ for all $\tau\in [0,1]$.

\end{proof}

More importantly:

\begin{remark}\label{RemarkBasicCohomologous}
Suppose that $\omega$ and $\omega'$ are basic-cohomologous closed basic 1-forms. That is, there is a basic smooth function $f:M\to \mathbb{R}$ such that $\omega-\omega'=df$. By arguing as in Lemma \ref{FormulaEqui} below it is simple to check that $\int_\sigma (\omega-\omega')=\int_\sigma df=f(\sigma_n(1))-f(\sigma_0(0))$ since $f$ is basic. In consequence, the expression $\int_{[\sigma]}\xi=\int_{\sigma}\omega$ is also well defined when $\sigma$ is a $G$-loop.
\end{remark}
The first interesting consequence of the previous result is the following.
\begin{proposition}\label{BasicTrivial}
If $G\rr M$ is simply $G$-connected then $H_{\textnormal{bas}}^1(G,\mathbb{R})=0$.
\end{proposition}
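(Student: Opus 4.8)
The plan is to adapt to $G$-paths the classical proof that a simply connected manifold has vanishing first de Rham cohomology. Let $\omega$ be a closed basic $1$-form on $M$ representing a class $\xi\in H_{\textnormal{bas}}^1(G,\mathbb{R})$; the goal is to produce a \emph{basic} smooth function $f\colon M\to\mathbb{R}$ with $df=\omega$, which then forces $\xi=0$. Fix a base-point $x_0\in M$. Since $G\rr M$ is $G$-connected, for each $x\in M$ there is a $G$-path $\lambda_x$ from $x_0$ to $x$, and I would set
$$f(x)=\int_{\lambda_x}\omega.$$

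First I would check that $f$ is well defined. The $G$-path integral is additive under concatenation (a constant path contributes nothing) and changes sign under inversion of a $G$-path, since the arrows do not enter $\int_\sigma\omega$ at all. Hence, if $\lambda_x$ and $\lambda_x'$ are two $G$-paths from $x_0$ to $x$, then $\lambda_x\ast(\lambda_x')^{-1}$ is a $G$-loop at $x_0$ and $\int_{\lambda_x}\omega-\int_{\lambda_x'}\omega=\int_{\lambda_x\ast(\lambda_x')^{-1}}\omega$. Because $G$ is simply $G$-connected the group $\Pi_1(G,x_0)$ is trivial, so this $G$-loop is $G$-homotopic to the constant $G$-loop $c_{x_0}$; by Lemma \ref{Lem1} its integral equals $\int_{c_{x_0}}\omega=0$, and therefore $\int_{\lambda_x}\omega=\int_{\lambda_x'}\omega$.

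Next I would verify that $f$ is smooth with $df=\omega$ and that $f$ is basic. For smoothness and $df=\omega$ the argument is local and identical to the manifold case: given $x\in M$ and a convex coordinate chart $U$ around $x$, every $x'\in U$ is joined to $x$ by the straight-line path $c_{x'}$ inside $U$, so $\lambda_x\ast c_{x'}$ is an admissible $G$-path from $x_0$ to $x'$ with $f(x')=f(x)+\int_{c_{x'}}\omega$, and $x'\mapsto\int_{c_{x'}}\omega$ is the usual Poincar\'e-lemma primitive of $\omega|_U$, hence smooth with differential $\omega|_U$. To see that $f$ is basic, let $g\in G$ be an arrow with $s(g)=y$, $t(g)=z$; choosing any $G$-path $\lambda_z$ from $x_0$ to $z$, the sequence obtained by appending to $\lambda_z$ the arrow $g^{-1}\colon z\to y$ followed by the constant path $c_y$ is an admissible $G$-path from $x_0$ to $y$ whose $G$-path integral is still $\int_{\lambda_z}\omega$, because neither the arrow nor the constant path contributes. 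By well-definedness of $f$ this yields $f(y)=\int_{\lambda_z}\omega=f(z)$, i.e.\ $s^\ast f=t^\ast f$. Thus $\omega=df$ with $f$ basic, so $\xi=0$; since $\omega$ was an arbitrary closed basic $1$-form, $H_{\textnormal{bas}}^1(G,\mathbb{R})=0$. The only step that is not purely formal is the smoothness of $f$, which is handled exactly as in the classical Poincar\'e lemma; the real content of the hypothesis is that it makes the primitive coming from $G$-path integration \emph{basic}, and this is precisely where the triviality of $\Pi_1(G,x_0)$, rather than mere simple connectivity of $M$, is needed.
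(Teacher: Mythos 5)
Your proposal is correct and follows essentially the same route as the paper: define $f(x)=\int_{\lambda_x}\omega$, use triviality of $\Pi_1(G,x_0)$ together with the homotopy invariance of the $G$-path integral (Lemma \ref{Lem1}) for well-definedness, check $df=\omega$ by a local/differentiation argument, and verify $s^\ast f=t^\ast f$ by appending an arrow and a constant path, exactly as in the paper's proof. The only cosmetic difference is that the paper differentiates along a chosen path $\gamma$ rather than invoking a convex chart and the classical Poincar\'e primitive.
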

\begin{proof}
	Let $\omega$ be a closed basic $1$-form and define $f(x)=\int_{\lambda_x}\omega$ where $\lambda_x$ is any $G$-path joining $x_0$ with $x$. This function is smooth and well defined since $\Pi_1(G,x_0)$ is trivial. Let $\gamma:[1,3]\to M$ be a smooth path such that $\gamma(2)=x$ and $\gamma'(2)=X_x\in T_xM$. Let $\lambda_{\gamma(1)}$ be a fixed $G$-path from $x_0$ to $\gamma(1)$ and consider the $G$-path $\lambda_{\gamma(\tau)}=\gamma|_{[1,\tau]}1_{\gamma(1)}\lambda_{\gamma(1)}$ for $1\leq\tau\leq 3$. Observe that $f(\gamma(\tau))=f(\gamma(1))+\int_1^{\tau}\omega_{\gamma(\nu)}(\gamma'(\nu)) d\nu$. Thus
$$
		df_x(X_x) = \frac{d}{d\tau}\left( f(\gamma(1))+\int_1^{\tau}\omega_{\gamma(\nu)}(\gamma'(\nu)) d\nu\right)|_{\tau=2}= \frac{d}{d\tau}\left(\int_1^{\tau}\omega_{\gamma(\nu)}(\gamma'(\nu)) d\nu\right)|_{\tau=2}=\omega_x(X_x).$$

Let us now check that $f$ is basic. If $g\in G$ then it follows that $\lambda_{t(g)}=  c_{t(g)}g\lambda_{s(g)}$, where $c_{t(g)}$ denotes the constant path at $t(g)$ and $\lambda_{s(g)}$ is any $G$-path from $x_0$ to $s(g)$, is a $G$-path joining $x_0$ with $t(g)$. Note that by definition $\int_{\lambda_{t(g)}}\omega=\int_{\lambda_{s(g)}}\omega$ since the $G$-path $c_{t(g)}g$ does not contribute to the $G$-path integral of the left hand side. Hence, $f(s(g))=f(t(g))$ as desired.
	
\end{proof}

If $\Pi_1(G,x_0)$ is the fundamental group of $G$ at the base-point $x_0\in M$ then from Lemma \ref{Lem1} we get a well defined group homomorphism $l_{\omega}:\Pi_1(G,x_0)\to (\mathbb{R},+)$ by sending $[\sigma]\mapsto \int_{\sigma}\omega$. Since $\mathbb{R}$ is abelian it follows that $l_\omega$ factors through the Hurewicz $G$-homomorphism $h$ from Proposition \ref{Hur} by a uniquely determined group homomorphism $\textnormal{Per}_\xi: H_1(G,\mathbb{Z})\to \mathbb{R}$ which only depends on the cohomology class $[\omega]\in H_{\textnormal{bas}}^1(G,\mathbb{R})$, see Remark \ref{RemarkBasicCohomologous}. This will be called the \emph{$G$-homomorphism of periods} of $\omega$. Because of the isomorphism $H^1_{dR}(G)\cong \textnormal{Hom}(H_1(G,\mathbb{\mathbb{Z}}),\mathbb{R})$ and $H_{\textnormal{bas}}^1(G,\mathbb{R})\hookrightarrow H^1_{dR}(G)$ it follows that the homomorphism of periods $\textnormal{Per}_\xi$ determines entirely the basic cohomology class $\xi$ and, moreover, any group homomorphism $H_1(G,\mathbb{Z})\to \mathbb{R}$ can be realized as the homomorphism of periods of a closed basic 1-form on $M$. The last assertion can be proven by arguing as in \cite[p. 164-165]{F} but using instead basic forms as in \cite[s. 8]{PPT}.

Let $d\theta$ denote the angle form on $S^1$. Here $\theta=\frac{1}{2\pi}\phi$ where $\phi$ is the angle multi-valued function on $S^1$. This is a closed 1-form without zeroes which can not be presented as the differential of a smooth function. The latter fact is consequence of the Stokes Theorem and the fact that $\int_{S^1}d\theta=1$. It is clear that if $f:M\to S^1$ is basic then $f^\ast(d\theta)$ becomes a closed basic $1$-form on $M$. Let us characterize the closed basic 1-forms that can be obtained in this way. 

\begin{proposition}\label{IntegralClass}
Let $\omega$ be a closed basic 1-form on $M$. Then $\omega=f^\ast(d\theta)$ where $f:M\to S^1$ is a smooth basic function if and only if the cohomology class $\xi=[\omega]\in H_{\textnormal{bas}}^1(G,\mathbb{R})$ is integral, that is, $\xi\in H_{\textnormal{bas}}^1(G,\mathbb{Z})=H_{\textnormal{bas}}^1(G,\mathbb{R})\cap H^1(M,\mathbb{Z})$.
\end{proposition}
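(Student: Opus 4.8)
The plan is to mimic the classical characterization of integral de~Rham classes via maps to the circle, carefully checking at every stage that the basic condition $s^\ast=t^\ast$ is respected.

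First I would prove the easy direction. If $\omega=f^\ast(d\theta)$ with $f\colon M\to S^1$ basic, then $\xi=f^\ast[d\theta]$. Since $[d\theta]$ generates $H^1(S^1,\mathbb{Z})\subset H^1(S^1,\mathbb{R})$, naturality of the de~Rham/singular comparison gives $\xi\in H^1(M,\mathbb{Z})$; and $\omega$ is basic by hypothesis, so $\xi\in H^1_{\textnormal{bas}}(G,\mathbb{R})\cap H^1(M,\mathbb{Z})=H^1_{\textnormal{bas}}(G,\mathbb{Z})$. Concretely one can avoid invoking the comparison isomorphism and instead argue directly: for any $G$-loop $\sigma$ at $x_0$, $\int_\sigma\omega=\int_\sigma f^\ast(d\theta)=\deg(f\circ\sigma)\in\mathbb{Z}$ by the $G$-path version of Stokes together with the fact that $f$ is basic (so the arrow contributions $g_j$ drop out because $f\circ s=f\circ t$). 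Hence $\textnormal{Per}_\xi(H_1(G,\mathbb{Z}))\subseteq\mathbb{Z}$, which is exactly what integrality of $\xi$ means when read through the identification $H^1_{\textnormal{bas}}(G,\mathbb{R})\cong H^1(X,\mathbb{R})$.

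For the converse, assume $\textnormal{Per}_\xi$ takes integer values. Define $f\colon M\to S^1=\mathbb{R}/\mathbb{Z}$ by $f(x)=\int_{\lambda_x}\omega \bmod \mathbb{Z}$, where $\lambda_x$ is any $G$-path from the base-point $x_0$ to $x$. This is well defined independently of the choice of $\lambda_x$: two such paths differ by a $G$-loop, whose $\omega$-integral lies in $\textnormal{Per}_\xi(H_1(G,\mathbb{Z}))\subseteq\mathbb{Z}$ by Lemma~\ref{Lem1}, Proposition~\ref{Hur} and Remark~\ref{RemarkBasicCohomologous}. Smoothness and the identity $df=\omega$ are checked exactly as in the proof of Proposition~\ref{BasicTrivial}, by moving the endpoint of $\lambda_x$ along a test curve $\gamma$ and differentiating $\tau\mapsto\int_1^\tau\omega_{\gamma(\nu)}(\gamma'(\nu))\,d\nu$. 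Finally, $f$ is basic: for $g\in G$ the $G$-path $c_{t(g)}g\lambda_{s(g)}$ joins $x_0$ to $t(g)$ and the segment $c_{t(g)}g$ contributes nothing to the $G$-path integral, so $\int_{\lambda_{t(g)}}\omega=\int_{\lambda_{s(g)}}\omega$, giving $f(t(g))=f(s(g))$, i.e. $s^\ast f=t^\ast f$. Since $df=\omega$ and $d\theta$ pulls back under $\mathbb{R}\to\mathbb{R}/\mathbb{Z}$ to the standard generator, one gets $f^\ast(d\theta)=\omega$.

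The main obstacle is the well-definedness of $f$, which is precisely where the hypothesis enters: I must know that the periods of $\omega$ over $G$-loops are integers, and this requires identifying $\textnormal{Per}_\xi(H_1(G,\mathbb{Z}))$ with the image of $\xi$ under the integral lattice $H^1_{\textnormal{bas}}(G,\mathbb{Z})\hookrightarrow H^1_{\textnormal{bas}}(G,\mathbb{R})$. The subtlety is that $H^1_{\textnormal{bas}}(G,\mathbb{Z})$ was defined as $H^1_{\textnormal{bas}}(G,\mathbb{R})\cap H^1(M,\mathbb{Z})$, a condition on $M$, whereas periods are naturally computed over $H_1(G,\mathbb{Z})$ (total singular homology of the groupoid); reconciling the two amounts to the Hurewicz picture of Proposition~\ref{Hur} together with the isomorphism $H^\bullet_{\textnormal{bas}}(G,\mathbb{R})\cong H^\bullet(X,\mathbb{R})$ and the de~Rham comparison, and I would spell out that $\xi$ integral in the stated sense is equivalent to $\textnormal{Per}_\xi$ being $\mathbb{Z}$-valued. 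Everything else is a routine transcription of the manifold argument, with the single recurring point that $f$ basic forces the groupoid arrows to drop out of all $G$-path integrals.
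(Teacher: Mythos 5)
Your proposal is correct and follows essentially the same route as the paper: the forward direction computes the $G$-periods of $f^\ast(d\theta)$ as degrees of the pushed-forward loops on $S^1$ (the basic condition making the arrow contributions disappear so the pieces concatenate), and the converse defines $f(x)=\int_{\lambda_x}\omega \bmod \mathbb{Z}$ (the paper writes it as $\exp(2\pi\sqrt{-1}\int_{\lambda_x}\omega)$), with well-definedness from integrality of the periods and with smoothness, $df=\omega$, and basicness checked exactly by the computations of Proposition \ref{BasicTrivial}. The point you flag about reconciling $\mathbb{Z}$-valued periods with the definition $H^1_{\textnormal{bas}}(G,\mathbb{Z})=H^1_{\textnormal{bas}}(G,\mathbb{R})\cap H^1(M,\mathbb{Z})$ is likewise passed through implicitly in the paper's proof.
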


\begin{proof}
We will mainly follow the classical proof of this result as in \cite[p. 37]{F}. Suppose that $\omega=f^\ast(d\theta)$ with $f:M\to S^1$ basic. Note that $f$ induces a Lie groupoid morphism $F:(G\rr M)\to (S^1\rr S^1)$ where $F$ is either given by $s^\ast f$ or $t^\ast f$. Therefore, if $\sigma=\sigma_ng_n\sigma_{n-1}\cdots \sigma_1g_1\sigma_0$ is a $G$-loop then $$f_\ast(\sigma)=f_\ast(\sigma_n)1_{f_\ast(\sigma_n)(0)}f_\ast(\sigma_{n-1})\cdots f_\ast(\sigma_1)1_{f_\ast(\sigma_1)(0)}f_\ast(\sigma_0),$$
turns out to be equivalent to a usual loop on $S^1$ (actually, we obtain a branch of loops formed by $f_\ast(\sigma_j)$ with $j=0,1,\cdots,n$). In consequence, the number
$$\int_{\sigma}\omega= \sum_{k=0}^{n}\int_{\sigma_k}f^\ast(d\theta)=\sum_{k=0}^{n}\int_{f_\ast(\sigma_k)}d\theta=\int_{f_\ast(\sigma)}d\theta\in \mathbb{Z},$$
is an integer since it agrees with the sum of the degrees of the loops $f_\ast(\sigma_j)$ on $S^1$, which clearly computes the degree of the whole branch loop $f_\ast(\sigma)$. Thus, we get that the $G$-homomorphism of periods of any closed basic $1$-form $\omega=f^\ast(d\theta)$ with $f:M\to S^1$ basic takes integral values so that its associated cohomology class lies inside $H_{\textnormal{bas}}^1(G,\mathbb{Z})$. Conversely, let us now suppose that all the $G$-periods associated to $\xi$ are integral. Fix a base point $x_0\in M$ and define $f(x)=\exp\left( 2\pi \sqrt{-1}\int_{\lambda_x}\omega\right)$, where $\lambda_x$ is any $G$-path joining $x_0$ with $x$. Note that the definition of $f$ does not depend on $\lambda_x$ since if $\lambda'_x$ is another $G$-path from $x_0$ to $x$ then for the $G$-loop at $x_0$ we get $\sigma= (\lambda'_x)^{-1} \ast \lambda_x$ and $\int_{\lambda_x}\omega-\int_{\lambda'_x}\omega=\int_{\sigma}\omega \in \mathbb{Z}$. By performing similar computations as those in the proof of Proposition \ref{BasicTrivial} it is simple to check that $f$ is a smooth basic function satisfying $\omega=f^\ast(d\theta)$.
\end{proof}

We need to obtain an additional property regarding the $G$-homomorphism of periods associated to the basic cohomology class $\xi$ of a closed basic $1$-form $\omega$ on $M$. By Proposition \ref{Covering} we can consider a covering space $p:M_{\xi}\to M$ over $G$ which corresponds to the kernel of the $G$-homomorphism of periods $\textnormal{Per}_\xi\circ h^{-1}:\Pi_1(G,x_0)\to \mathbb{R}$. Every $(M_{\xi}\rtimes G)$-loop in $M_\xi$ project by $p$ to a $G$-loop in $M$ with trivial periods with respect to $\omega$. Therefore, it follows that the pullback basic 1-form $p^\ast\omega$ is basic exact. That is, there is a basic function $f:M_\xi \to \mathbb{R}$ such that $p^\ast\omega=df$. Such $f$ can be defined as $f(e)=\int_{\lambda_e}p^\ast\omega$ where $\lambda_e$ is any $(M_{\xi}\rtimes G)$-path joining $e$ to a fixed base point in $M_{\xi}$. Recall that the free abelian group of equivariant covering transformations $\Gamma^G(E)$ acts by the left on $M_\xi$. Thus, the cohomology class $\xi$ determines an injective group homomorphism $\alpha_\xi:\Gamma^G(E)\to \mathbb{R}$ with image equal to the group of periods. Indeed, we define $\alpha_\xi$ through the composition $\textnormal{Per}_\xi\circ \beta^{-1}$ where $\beta:\Gamma^G(E)\to \Pi_1(G,x_0)$ as $\beta(f)=[p_\ast(\tilde{\sigma})]$ is the isomorphism previously described. Therefore, for $\varphi\in \Gamma^G(E)$ we choose $x\in M_\xi$ and any $(M_{\xi}\rtimes G)$-path $\tilde{\sigma}$ from $x$ to $\varphi(x)$. Note that $p_\ast(\tilde{\sigma})$ defines a $G$-loop at $p(x)$, so that it also defines a homology class in $|p_\ast(\tilde{\sigma})|\in H_1(G,\mathbb{Z})$. Hence, since $H^1_{dR}(G)\cong \textnormal{Hom}(H_1(G,\mathbb{\mathbb{Z}}),\mathbb{R})$ and $H_{\textnormal{bas}}^1(G,\mathbb{R})\hookrightarrow H^1_{dR}(G)$ it makes sense to set
\begin{equation}\label{RankDeck}
\alpha_\xi(\varphi)=\langle \xi,|p_\ast(\tilde{\sigma})|\rangle=\int_{p_\ast(\tilde{\sigma})}\omega\in \mathbb{R}.
\end{equation}

Such an expression does not depend on $x$ nor $\tilde{\sigma}$. It is clear that $df=p^\ast \omega$ is invariant by the action of $\Gamma^G(E)$ but $f$ is not, in fact:
\begin{lemma}\label{FormulaEqui}
The following formula holds true
$$f(\varphi(x))=f(x)+\alpha_\xi(\varphi),$$
for all $x\in M_\xi$ and $\varphi\in \Gamma^G(E)$.
\end{lemma}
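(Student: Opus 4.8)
The plan is to evaluate $f(\varphi(x))-f(x)$ by integrating $df=p^\ast\omega$ along an $(M_\xi\rtimes G)$-path joining $x$ to $\varphi(x)$, and then to recognize the resulting number as $\alpha_\xi(\varphi)$ via formula \eqref{RankDeck}. Fix $x\in M_\xi$ and $\varphi\in\Gamma^G(E)$ and, using that $M_\xi\rtimes G$ is connected, choose an $(M_\xi\rtimes G)$-path $\tilde\sigma=\tilde\sigma_n(e_n,g_n)\tilde\sigma_{n-1}\cdots(e_1,g_1)\tilde\sigma_0$ from $x$ to $\varphi(x)$.

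First I would establish the auxiliary identity
\[
\int_{\tilde\sigma}df=f(\tilde\sigma_n(1))-f(\tilde\sigma_0(0))
\]
valid for any basic function $f$ on $M_\xi$. By definition $\int_{\tilde\sigma}df=\sum_{k=0}^n\int_{\tilde\sigma_k}df$, and the fundamental theorem of calculus for the piecewise smooth paths $\tilde\sigma_k$ gives $\int_{\tilde\sigma_k}df=f(\tilde\sigma_k(1))-f(\tilde\sigma_k(0))$. Since the arrow $(e_j,g_j)$ goes from $e_jg_j$ to $e_j$ and $\tilde\sigma_j(0)=e_j$, the gluing condition for an $(M_\xi\rtimes G)$-path reads $\tilde\sigma_{j-1}(1)=\tilde\sigma_j(0)\cdot g_j$; basicness of $f$, i.e. $f(e\cdot g)=f(e)$, then forces $f(\tilde\sigma_{j-1}(1))=f(\tilde\sigma_j(0))$ for every $j$, so the sum telescopes to $f(\tilde\sigma_n(1))-f(\tilde\sigma_0(0))=f(\varphi(x))-f(x)$. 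This is the computation alluded to in Remark \ref{RemarkBasicCohomologous}.

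Next I would compute the same quantity through $p^\ast\omega$. From $df=p^\ast\omega$ together with the change-of-variables identity $\int_{\tilde\sigma_k}p^\ast\omega=\int_{p_\ast(\tilde\sigma_k)}\omega$ for ordinary path integrals, summing over $k$ gives $\int_{\tilde\sigma}df=\int_{\tilde\sigma}p^\ast\omega=\int_{p_\ast(\tilde\sigma)}\omega$. Since $\varphi$ is equivariant and covers the identity of $M$, we have $p(\varphi(x))=p(x)$, so $p_\ast(\tilde\sigma)$ is a $G$-loop at $p(x)$, and formula \eqref{RankDeck} defining $\alpha_\xi$ asserts precisely that $\int_{p_\ast(\tilde\sigma)}\omega=\alpha_\xi(\varphi)$. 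Comparing the two evaluations of $\int_{\tilde\sigma}df$ yields $f(\varphi(x))-f(x)=\alpha_\xi(\varphi)$, as claimed.

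The only delicate point is the bookkeeping in the telescoping step: one must use the orientation convention for the arrows $(e_j,g_j)$ and the gluing relations $\tilde\sigma_j(0)g_j=\tilde\sigma_{j-1}(1)$ consistently so that basicness of $f$ cancels every intermediate term. Everything else reduces to the standard path-integral change-of-variables identity and to the definition of $\alpha_\xi$; in particular, independence of the statement from the auxiliary choice of $\tilde\sigma$ is inherited from the independence already recorded for \eqref{RankDeck}.
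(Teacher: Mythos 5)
Your proposal is correct and takes essentially the same route as the paper: both integrate $df=p^\ast\omega$ along an $(M_\xi\rtimes G)$-path from $x$ to $\varphi(x)$, telescope the sum $\sum_k\bigl(f(\tilde\sigma_k(1))-f(\tilde\sigma_k(0))\bigr)$ using basicness of $f$ across the arrows, and identify the resulting $G$-path integral with $\alpha_\xi(\varphi)$ via \eqref{RankDeck}. Your explicit handling of the gluing relation $\tilde\sigma_{j-1}(1)=\tilde\sigma_j(0)\cdot g_j$ is exactly the paper's cancellation $f(t(\tilde g_k))=f(s(\tilde g_k))$, so there is no substantive difference.
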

\begin{proof}
Pick a $(M_{\xi}\rtimes G)$-path $\tilde{\sigma}=\tilde{\sigma}_n\tilde{g}_n\tilde{\sigma}_{n-1}\cdots \tilde{\sigma}_1\tilde{g}_1\tilde{\sigma}_0$ from $x$ to $\varphi(x)$. Then, since $f$ is basic we get

\begin{eqnarray*}
\int_{\tilde{\sigma}} p^\ast \omega & =& \int_{\tilde{\sigma}} df = \sum_{k=0}^{n}(f(\tilde{\sigma_k}(1))-f(\tilde{\sigma_k}(0)))\\
&= & f(\tilde{\sigma_n}(1))+\sum_{k=1}^n(f(t(\tilde{g}_k))-f(s(\tilde{g}_k)))-f(\tilde{\sigma_0}(0))= f(\varphi(x))-f(x).
\end{eqnarray*}

But $\int_{\tilde{\sigma}} p^\ast \omega=\int_{p_\ast \tilde{\sigma}} \omega =\alpha_\xi(\varphi)$, so that the formula follows.
\end{proof}

\subsection{Closed basic 1-forms of Morse type}
Let $\omega$ be a closed basic $1$-form on $G$ and let $\xi$ denote the basic cohomology class $[\omega]\in H_{\textnormal{bas}}^1(G,\mathbb{R})$. Note that $\xi=0$ if and only if there exists a basic smooth function $f:M\to \mathbb{R}$ such that $\omega=df$. Therefore, in this case the Morse theoretical features of $\omega$ on $X$ are the same as those described by means of $f$ in \cite{OV}. We will be mainly interested in studying the case $\xi\neq 0$.
\begin{lemma}
	The set of zeros of $\omega\in \Omega_{\textnormal{bas}}^1(G)$ is saturated in $M$. In particular, if $\omega_1=s^\ast \omega=t^\ast \omega$ then we have a topological subgroupoid $\textnormal{Zeros}(\omega_1)\rr \textnormal{Zeros}(\omega)$ of $G\rr M$.
\end{lemma}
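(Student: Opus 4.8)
The plan is to establish the two assertions in turn: first that the zero set $\textnormal{Crit}(\omega)$ is a union of $G$-orbits, and then to recognize the claimed object as the full restriction of $G$ to that saturated subset.

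First I would record the two inputs that make the argument work. A point $x\in M$ lies in $\textnormal{Crit}(\omega)$ exactly when $\omega_x=0\in T_x^\ast M$, and both structural maps $s,t\colon G\to M$ are surjective submersions, so that for every arrow $g\in G$ the differentials $ds_g\colon T_gG\to T_{s(g)}M$ and $dt_g\colon T_gG\to T_{t(g)}M$ are surjective. Now fix $g\in G$ and put $x=s(g)$, $y=t(g)$. Evaluating the basic identity $s^\ast\omega=t^\ast\omega$ at $g$ gives the equality $\omega_x\circ ds_g=\omega_y\circ dt_g$ in $T_g^\ast G$. If $x\in\textnormal{Crit}(\omega)$ then the left-hand side vanishes, hence $\omega_y\circ dt_g=0$, and surjectivity of $dt_g$ forces $\omega_y=0$; thus $y\in\textnormal{Crit}(\omega)$. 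Since $g$ was arbitrary, $\textnormal{Crit}(\omega)$ is saturated.

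Next, writing $\omega_1\ce s^\ast\omega=t^\ast\omega$, the same computation shows that $(\omega_1)_g=\omega_{s(g)}\circ ds_g$ vanishes if and only if $\omega_{s(g)}=0$ (again by surjectivity of $ds_g$), and symmetrically for $t$; hence $\textnormal{Crit}(\omega_1)=s^{-1}(\textnormal{Crit}(\omega))=t^{-1}(\textnormal{Crit}(\omega))$. In particular $s$ and $t$ restrict to (surjective) maps $\textnormal{Crit}(\omega_1)\rr\textnormal{Crit}(\omega)$. Closure under the remaining structure maps is then immediate from the description $\textnormal{Crit}(\omega_1)=s^{-1}(\textnormal{Crit}(\omega))$: for $x\in\textnormal{Crit}(\omega)$ one has $s(u(x))=s(1_x)=x$, so $1_x\in\textnormal{Crit}(\omega_1)$; if $g,h\in\textnormal{Crit}(\omega_1)$ are composable then $s(gh)=s(h)\in\textnormal{Crit}(\omega)$, so $gh\in\textnormal{Crit}(\omega_1)$; and $s(g^{-1})=t(g)\in\textnormal{Crit}(\omega)$ whenever $g\in\textnormal{Crit}(\omega_1)$. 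Therefore $\textnormal{Crit}(\omega_1)\rr\textnormal{Crit}(\omega)$ is precisely the restriction of $G\rr M$ to the saturated subset $\textnormal{Crit}(\omega)$, hence a subgroupoid.

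The argument is essentially formal once saturation is in place; the single step deserving care is deducing $\omega_y=0$ from the equality of the pulled-back forms, which is exactly where submersivity of $s$ and $t$ enters. I would also stress that $\textnormal{Crit}(\omega)$ need not be an embedded submanifold of $M$, so one should only claim a \emph{topological} subgroupoid, in agreement with the phrasing of the statement.
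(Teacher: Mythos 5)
Your argument is correct and follows the same route as the paper: the paper's proof consists precisely of observing that $s,t$ are surjective submersions and that $\textnormal{Crit}(\omega_1)=s^{-1}\textnormal{Crit}(\omega)=t^{-1}\textnormal{Crit}(\omega)$, which is exactly the identity you derive from $s^\ast\omega=t^\ast\omega$ and surjectivity of $ds_g$, $dt_g$. You merely spell out the details (and the closure under the structure maps) that the paper leaves implicit.
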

\begin{proof}
	This easily follows from the fact that both $t$ and $s$ are surjective submersions and $\textnormal{Zeros}(\omega_1)=s^{-1}\textnormal{Zeros}(\omega)=t^{-1}\textnormal{Zeros}(\omega)$.
\end{proof}
If $\omega$ is a closed basic 1-form then by the Poincar\'e Lemma (see \cite[Lem. 8.5]{PPT}), it follows that for each groupoid orbit $\mathcal{O}$ there exists an open neighborhood $\mathcal{O}\subset U\subset M$ and a basic smooth function $f_U\in \Omega_{\textnormal{bas}}^0(G|_{U})$ such that $\omega|_U=df_U$. If $U$ is connected then the function $f_U$ is determined by $\omega|_U$ uniquely up to a constant. In particular, $\textnormal{Zeros}(\omega|_{U})=\textnormal{Crit}(f_U)$. The \emph{normal Hessian} of $\omega$ along an orbit of zeros $\mathcal{O}$ is defined to the the normal Hessian of $f_U$ along the critical orbit $\mathcal{O}$.

These elementary facts motivate the following definition.
\begin{definition}
	An orbit of zeros $\mathcal{O}$ of $\omega$ is said to be \emph{nondegenerate} if and only if $f_U$ is nondegenerate along $\mathcal{O}$ in the sense of Bott. Accordingly, we say that $\omega$ is \emph{Morse} if all of its orbits of zeros are nondegenerate.
\end{definition}

The notion of Morse--Bott function is classical and was initially introduced by Bott in \cite{Bo}. Our first key observation regarding this notion is that it is Morita invariant. The reader is recommended to consult \cite{dH,MoMr} for studying the basics on Morita equivalences in the realm of Lie groupoids.

\begin{proposition}\label{MoritaMorse}
	Suppose that $G$ an $G'$ are Morita equivalent Lie groupoids. If $G'$ admits a Morse closed basic $1$-form then so does $G$.
\end{proposition}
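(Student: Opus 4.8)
The plan is to reduce the statement to transporting a Morse-type closed basic $1$-form across a Morita equivalence, using the standard fact that any Morita equivalence can be refined through a pair of essential equivalences (hypercovers). Concretely, recall that $G$ and $G'$ being Morita equivalent means there is a third Lie groupoid $H$ together with essential equivalences (fully faithful, essentially surjective submersions) $\varphi: H\to G$ and $\varphi': H\to G'$. So it suffices to prove two things: (i) if $G'$ admits a Morse closed basic $1$-form and $\varphi': H\to G'$ is an essential equivalence, then $H$ admits one; and (ii) if $H$ admits a Morse closed basic $1$-form and $\varphi: H\to G$ is an essential equivalence, then $G$ admits one. In other words, Morse-ness of closed basic $1$-forms is invariant under pullback and pushforward along essential equivalences; composing these two yields the proposition.

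First I would handle the pullback direction (i). Write $\varphi_0: H_0\to G'_0$ for the map on objects, which for an essential equivalence is a surjective submersion. Given a Morse closed basic $1$-form $\omega'$ on $G'$, set $\omega := \varphi_0^\ast \omega'$. Closedness is immediate since pullback commutes with $d$. For basicness one checks $s_H^\ast \omega = t_H^\ast \omega$: this follows from $s_{G'}\circ \varphi_1 = \varphi_0\circ s_H$ and $t_{G'}\circ\varphi_1 = \varphi_0\circ t_H$ together with $s_{G'}^\ast\omega' = t_{G'}^\ast\omega'$. For the Morse condition, one uses that an essential equivalence identifies orbit spaces $H_0/H\cong G'_0/G'$ homeomorphically, sends orbits to orbits, and, crucially, induces isomorphisms on normal bundles of orbits intertwining the normal representations (this is exactly the kind of local-model statement underlying the Morita invariance of the normal Hessian, cf.\ the cited \cite{OV}). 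Hence critical orbits of $\omega$ correspond to critical orbits of $\omega'$, and the normal Hessian of $\omega$ along $\mathcal{O}$ is identified with that of $\omega'$ along the corresponding orbit; nondegeneracy is therefore preserved. One can phrase this cleanly using the local primitives: near an orbit $\mathcal{O}'$ one has $\omega'|_{U'} = df'_{U'}$, and then $\omega|_{\varphi_0^{-1}(U')} = d(f'_{U'}\circ\varphi_0)$, so nondegeneracy of $\omega$ reduces to nondegeneracy of $f'_{U'}\circ\varphi_0$, which follows from nondegeneracy of $f'_{U'}$ together with the fact that $\varphi_0$ is, transverse to each orbit, a submersion with the complementary directions tangent to orbits.

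For the pushforward direction (ii), given a Morse closed basic $1$-form $\eta$ on $H$ with essential equivalence $\varphi: H\to G$, I would produce $\omega$ on $G$ as follows. Since $\varphi_0: H_0\to G_0$ is a surjective submersion and $\eta$ is basic on $H$, one can descend $\eta$ to a well-defined closed basic $1$-form $\omega$ on $G$: pointwise, for $p\in G_0$ pick any $q\in\varphi_0^{-1}(p)$, and use $(d\varphi_0)_q$ to transport $\eta_q$; basicness of $\eta$ together with fully faithfulness of $\varphi$ (every two preimages $q, q'$ of $p$ are joined by a unique arrow in $H$, and transport along that arrow is the identity on the relevant quotient of tangent spaces because $\eta$ is basic) guarantees the result is independent of the choice of $q$ and is smooth since $\varphi_0$ is a submersion. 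Closedness and basicness of $\omega$ then follow by the same compatibility of structure maps as in (i) — indeed $\varphi_0^\ast\omega = \eta$, so $d\eta = 0$ gives $d\omega = 0$ after using that $\varphi_0^\ast$ is injective on forms (it is, being a submersion). The Morse condition for $\omega$ then transfers back by the same orbit/normal-bundle identification used in (i), now read in the other direction.

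The main obstacle is the verification, in direction (ii), that the descended form $\omega$ is well defined and smooth — that is, checking that the basicness of $\eta$ is exactly what is needed to make the fibrewise transport along $\varphi_0$ consistent over the fibres of an essential equivalence. This is where fully faithfulness is used in an essential way rather than just on objects, and it is the crux of why Morita-invariance holds at the level of the differential geometry and not merely at the level of cohomology classes. A secondary technical point, appearing in both directions, is the precise statement that an essential equivalence induces an isomorphism of normal representations of corresponding orbits compatible with the Hessians; I would either cite this from the slice-theorem literature for proper groupoids (e.g.\ \cite{PPT}, \cite{dHF}) or, more elementarily, deduce it from the local primitive picture $\omega|_U = df_U$ above, which reduces everything to the already-known Morita invariance of the Morse--Bott condition for basic functions as treated in \cite{OV}. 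Once these identifications are in place, the conclusion $c_j$-type data is not needed here; the statement is purely about existence of a Morse closed basic $1$-form, and the two essential-equivalence steps compose to give it.
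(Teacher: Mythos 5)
Your overall strategy (transport the form across the equivalence, match critical orbits, and transfer nondegeneracy through the local primitives $\omega|_U=df_U$) is the same as the paper's, but your implementation via a span of essential equivalences contains a genuine error at its load-bearing step: you assert that for an essential equivalence $\varphi\colon H\to G$ the object map $\varphi_0\colon H_0\to G_0$ is a surjective submersion. This is false in general. Essential surjectivity only requires that $t\circ\mathrm{pr}_1\colon G\times_{s,G_0,\varphi_0}H_0\to G_0$ be a surjective submersion; $\varphi_0$ itself need be neither surjective nor submersive. For instance, the inclusion of a complete transversal, or of an isotropy group $G_x\rr\{x\}$ into a transitive proper groupoid $G\rr M$, is an essential equivalence with $\varphi_0$ far from surjective. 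Your descent step (ii) collapses in such cases: the fibre $\varphi_0^{-1}(p)$ may be empty, $(d\varphi_0)_q$ need not be surjective (so ``transporting $\eta_q$'' does not define a covector on all of $T_pG_0$ without first invoking that the descended form must kill orbit directions and then using the normal-space isomorphism), and the smoothness argument ``since $\varphi_0$ is a submersion'' has no content. A smaller slip in the same step: two preimages $q,q'$ of $p$ are not joined by a \emph{unique} arrow of $H$; by fully faithfulness the set of such arrows is a torsor under the isotropy $G_p$, and the consistency of transport along \emph{all} of them is exactly what basicness must provide.

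The gap is repairable in two standard ways, either of which brings you back to the paper's argument. One is to choose the middle groupoid of the span to be a pullback groupoid along a surjective submersion (e.g.\ the pullback groupoids $G[P]\cong G'[P]$ over a principal bibundle $P$), in which case the object maps really are surjective submersions and your descent argument goes through. The other is simply to quote the Morita invariance of basic forms, i.e.\ the existence of a unique closed basic $1$-form $\omega$ on $G$ with $a_l^\ast\omega=a_r^\ast\omega'$ for the bibundle anchors $a_l,a_r$ (\cite{PPT,Wa}) — this is what the paper does, and it is precisely the point where your proposal tries to reprove the hard part with an argument that does not survive general essential equivalences. Once the transported form is in hand, your treatment of the Morse condition (correspondence of critical orbits via the induced isomorphisms on normal spaces, and nondegeneracy via the pulled-back primitives $f'_{U'}\circ\varphi_0$, resp.\ $a_l^\ast f_U=a_r^\ast f'_{U'}$) is correct and coincides with the paper's use of the surjective submersivity of the anchors together with the Morse--Bott invariance from \cite{OV}.
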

\begin{proof}
	Let $P$ be a principal bi-bundle between $G$ and $G'$ with anchor maps $a_l:P\to M$ and $a_r:P\to M'$. If $\omega'$ is a Morse closed basic $1$-form on $G'$ then there exists a unique closed basic $1$-form $\omega$ on $G$ such that $a_l^\ast(\omega)=a_r^\ast(\omega')$, see \cite{PPT,Wa}. This establishes a correspondence between orbits of zeros since both $a_l$ and $a_r$ are surjective submersions, compare Lemma 5.11 in \cite{PTW}. Furthermore, if $\mathcal{O'}$ and $\mathcal{O}$ are related orbits of zeros then there are connected neighborhoods $\mathcal{O'}\subseteq U'\subseteq M'$ and $\mathcal{O}\subseteq U\subseteq M$ together with basic functions $f'_{U'}$ and $f_{U}$ such that $a_l^\ast(f_{U})=a_r^\ast(f'_{U'})$. Hence, if $\mathcal{O'}$ is nondegenerate then so is $\mathcal{O}$ since both $a_l$ and $a_r$ are surjective submersions.
\end{proof}

Let $[M/G]$ denote the separated differentiable stack presented by $G\rr M$, compare \cite{BX,dH}. Observe that if $\omega$ is a basic $1$-form on $G$ then the expression $\overline{\omega}([x])=\omega(x)$ for $[x]\in X$ is well defined. In particular, this fact allows us to define a \emph{stacky closed $1$-form} on $[M/G]$ as an element $\overline{\omega}$ presented by a closed basic $1$-form $\omega$ on $G$. In consequence, we say that $[x]$ is a \emph{stacky zero} of $\overline{\omega}$ if and only if every point in the orbit $\mathcal{O}_x$ through $x$ is a zero of   $\omega$. Also, a stacky zero $[x]$ is nondegenerate if and only if $\mathcal{O}_x$ is nondegenerate for $\omega$.

\begin{definition}
	A stacky closed $1$-form $\overline{\omega}$ on $[M/G]$ is \emph{Morse} if all of its stacky zeros are nondegenerate. That is, it is presented by a closed basic $1$-form of Morse type $\omega$  on $G$.  
\end{definition}

It is well known that if $G\rr M$ is a Lie groupoid then its \emph{tangent groupoid} $TG\rightrightarrows TM$ is obtained by applying the tangent functor to each of its structural maps. If $\mathcal{O}_x \subset M$ is an orbit then we can restrict the groupoid structure to $G_{\mathcal{O}_x}=s^{-1}(\mathcal{O}_x)=t^{-1}(\mathcal{O}_x)$, thus obtaining a Lie subgroupoid $G_{\mathcal{O}_x}\rightrightarrows \mathcal{O}_x$ of $G\rightrightarrows M$. Furthermore, the Lie groupoid structure of $TG\rightrightarrows TM$ induces a Lie groupoid $\nu(G_{\mathcal{O}_x})\rightrightarrows \nu(\mathcal{O}_x)$ on the normal bundles, having the property that all of its structural maps are fiberwise isomorphisms. In particular, we have that $\overline{dt}\circ \overline{ds}^{-1}:s^*\nu(\mathcal{O}_x)\to t^*\nu(\mathcal{O}_x)$ defines a representation $(G_{\mathcal{O}_x}\rr \mathcal{O}_x)\curvearrowright(\nu(\mathcal{O}_x)\to \mathcal{O}_x)$. As a consequence, for every $x \in M$ the isotropy group $G_x$ has a canonical representation on the normal fiber $\nu(\mathcal{O}_x)_x$ called the \emph{normal representation} of $G_x$ on the normal direction.

Let $\mathcal{O}_x$ be a nondegenerate orbit of zeros of $\omega$ and let $\mathcal{O}_x\subset U\subset M$ and  $f_U:U\to \mathbb{R}$ respectively be an open neighborhood and basic smooth function such that $\omega|_U=df_U$. Let us also fix a groupoid metric on $G\rr M$ in the sense of del Hoyo and Fernandes \cite{dHF}. Since the normal Hessian $\textnormal{Hess}(f_U)$ is nondegenerate it follows that by using the groupoid metric the normal bundle $\nu(\mathcal{O}_x)$ splits into the Whitney sum of two subbundles $\nu_-(\mathcal{O}_x)\oplus \nu_+(\mathcal{O}_x)$ such that $\textnormal{Hess}(f_U)$ is strictly negative on $\nu_-(\mathcal{O}_x)$ and strictly positive on $\nu_+(\mathcal{O}_x)$. Let $G_x$ be the isotropy group at $x$. From Lemma 5.4 in \cite{OV} we know that $\textnormal{Hess}(f_U)$ is invariant with respect to the normal representation $G_x\curvearrowright \nu(\mathcal{O}_x)_x$ so that it preserves the splitting above since the normal representation is by isometries in this case. In consequence, we get a normal sub-representation $G_x\curvearrowright \nu_-(\mathcal{O}_x)_x$.

As consequence of Proposition 5.8 in \cite{OV} we can set up the following definition.

\begin{definition}
The \emph{stacky index} of $[x]$ is defined to be
$$\dim \nu_-(\mathcal{O}_{x})_x/G_{x}=\dim \nu_-(\mathcal{O}_{x})_x-\dim G_{x}.$$

Additionally, a zero $[x]$ of $\overline{\omega}$ is said to be \emph{orientable} if the action of $G_x$ on $\nu_-(\mathcal{O})_x$ is orientation-preserving.
\end{definition}

Stacky Morse functions $F:[M/G]\to \mathbb{R}$ were defined as well as studied in \cite{OV}. These are completely determined by basic functions $f:M\to \mathbb{R}$ whose critical orbits are nondegenerate in the sense of Bott. It is important to mention that the main results of classical Morse theory were extended to the context of Lie groupoids and their differentiable stacks by using such a notion of basic Morse--Bott function. For instance, Morse-like inequalities for the orbit space $X$ were obtained, provided it is compact. The case of orbifolds was first developed by Hepworth in \cite{H}. In both cases singular homology with real coefficients was considered. Nevertheless, such an approach turns out to be equivalent to considering integer coefficients after using the universal coefficient theorem for homology, compare \cite[p. 74]{BanHur}.

\begin{remark}
On the one hand, analogous results of local behavior, as the stacky Morse lemma and its consequences, can be analogously proven for stacky $1$-forms of Morse type on $[M/G]$. In particular, stacky zeros are isolated in $X$ and, if $X$ is compact, then they are a finite amount. On the other hand, we can guarantee the existence of stacky $1$-forms of Morse type in the following cases:
\begin{itemize}
\item the groupoid $G\rr M$ is proper with the canonical projection $\pi:M\to X$ being a proper map,
\item the groupoid $G\rr M$ is proper having either compact orbit space $X$ or else a finite number of Morita types, and
\item the groupoid $G\rr M$ is proper and \'etale, so that $[M/G]$ is an orbifold. By \emph{\'etale} we mean that either the source or the target map of $G$ is a local diffeomorphism.
\end{itemize}

All these assertions may be shown by using the stacky results in \cite{OV} and the results for orbifolds proven in \cite{H} together with the classical arguments developed in \cite[s. 1.4]{Pa}.
\end{remark}
\begin{remark}
As consequence of Proposition \ref{IntegralClass}, we have that the notions introduced above allow us to speak about circle-valued stacky Morse functions $[M/G]\to S^1$. Here $S^1$ stands for the differentiable stack presented by the unit Lie groupoid $S^1\rr S^1$.
\end{remark}

\section{The Novikov numbers}\label{S:4}

Our goal in this section is to exhibit a natural extension of the Novikov numbers associated to the basic cohomology class of a closed basic $1$-form on certain proper Lie groupoids. At the end we shall prove corresponding Novikov inequalities for compact orbifolds.

We start by introducing the so-called Novikov ring. Let $\Gamma$ be an additive subgroup of $(\mathbb{R},+)$. We denote by $\textbf{Nov}(\Gamma)$ the set of formal power series of the form $\sum_{\gamma\in \Gamma}n_{\gamma}\tau^{\gamma}$ where $\tau$ is a formal variable, the coefficients are integers $n_\gamma\in \mathbb{Z}$, and the exponents $\gamma$ belong to $\Gamma$ and satisfy the following condition:
$$\textnormal{for any}\ c\in\mathbb{R}\ \textnormal{the set}\ \lbrace \gamma\in \Gamma: n_{\gamma}\neq 0,\ \gamma>c\rbrace\ \textnormal{is finite}.$$

Equivalently, an element in $\textbf{Nov}(\Gamma)$ can be represented in the form $\sum_{i=0}^\infty n_{i}\tau^{\gamma_i}$ where $n_i\in \mathbb{Z}$, $\gamma_i\in \Gamma$ with $\gamma_1>\gamma_2>\gamma_3>\cdots$ and $\gamma_i$ tends to $-\infty$. It follows that $\textbf{Nov}(\Gamma)$ is a commutative ring which is called the \emph{Novikov ring} of $\Gamma$. For the particular case $\Gamma=\mathbb{R}$ we shall abbreviate the notation $\textbf{Nov}(\Gamma)$ to $\textbf{Nov}$. Some important properties are that $\textbf{Nov}(\Gamma)$ is a principal ideal domain, if $\mathbb{F}$ is a field then $\mathbb{F}\otimes \textbf{Nov}(\Gamma)$ is also a field, and if $\Gamma_1\subset \Gamma_2\subset \mathbb{R}$ are additive subgroups then $\textbf{Nov}(\Gamma_2)$, seen as an $\textbf{Nov}(\Gamma_1)$-module, has no torsion and is flat. In particular, $\textbf{Nov}$ is torsion free.

\begin{definition}
A Lie groupoid $G\rr M$ is said to be of \emph{finitely generated type} if the total homology $H_\bullet^{\textnormal{tot}}(G,\mathcal{L})$ is a finitely generated $R$-module for any local system of $R$-modules $\mathcal{L}$ over $G\rr M$.
\end{definition}

An important class of Lie groupoids of finitely generated type is given by Lie groupoids for which each manifold forming its nerve has finitely many cells in every dimension, in particular, compact Lie groupoids.

From now on we assume that $G\rr M$ is a proper Lie groupoid of finitely generated type with compact orbit space $X$. Let $\xi\in H_{\textnormal{bas}}^1(G,\mathbb{R})$ be any basic cohomology class and $\omega$ be a closed basic 1-form presenting it. We can define a ring homomorphism $\phi_\xi: \mathbb{Z}(\Pi_1(G,x_0))\to \textbf{Nov}$ by setting $\phi_\xi([\sigma]):=\tau^{\textnormal{Per}_\xi(h([\sigma]))}$ for all $[\sigma]\in \Pi_1(G,x_0)$ and then extending linearly. As consequence of Example \ref{ExampleNovikov} we get that $\phi_\xi$ determines a local system $\mathcal{L}_\xi$ of left $\textbf{Nov}$-modules over $G\rr M$. The groups $H_j^{\textnormal{tot}}(G,\mathcal{L}_\xi)$ are called \emph{Novikov homology groups} of $\xi$. It follows that the homology $H_j^{\textnormal{tot}}(G,\mathcal{L}_\xi)$ is a finitely generated module over the ring {\bf Nov}. Since {\bf Nov} is a principal ideal domain we have that the module $H_j^{\textnormal{tot}}(G,\mathcal{L}_\xi)$ is a direct sum of a free submodule with a torsion submodule. The \emph{Novikov Betti number} $b_j(\xi)$ is defined to be the rank of the  free summand of $H_j^{\textnormal{tot}}(G,\mathcal{L}_\xi)$ and the \emph{Novikov torsion number} $q_j(\xi)$ is defined to be the minimal number of generators of the torsion submodule of $H_j^{\textnormal{tot}}(G,\mathcal{L}_\xi)$.

\begin{remark}\label{xi0Remark}
If $\xi=0$ then $\mathcal{L}_0$ is constant since the induced isomorphism with respect to $\mathcal{L}_0$ of every $G$-path has to be the trivial one. In other words, the homomorphism of periods $\textnormal{Per}_0$ takes values in $\mathbb{Z}\subset \textbf{Nov}$ and the homology with local coefficients $\mathcal{L}_0$ becomes singular homology with coefficients in \textbf{Nov}. By Proposition \ref{EilenbergProp} we know that the total chain complex $\textbf{Nov}\otimes_{\mathbb{Z}(\Pi_1(G,x_0))} \tilde{C}_\bullet(E)$ agrees in this case with $\textbf{Nov}\otimes_{\mathbb{Z}} \tilde{C}_\bullet(G)$. In consequence, by using the universal coefficient theorem for homology as in Lemma 1.12 from \cite{F} we obtain that $b_j(0)$ coincides with the rank (``Betti numbers'') of $H_j(G,\mathbb{Z})$ and $q_j(0)$ equals the minimal number of generators of the torsion subgroup of $H_j(G,\mathbb{Z})$. This is because $\textbf{Nov}$ is torsion free. 
\end{remark}

We claim that there are other two possible ways to define the Novikov numbers associated to $\xi$. In order to do so, we define the \emph{rank} of the basic cohomology class $\xi$ as the rank of the image of $\textnormal{Per}_\xi$. This number will be denoted by $\textnormal{rank}(\xi)$. Note that by arguing as in the proof Proposition \ref{BasicTrivial} we may prove that $\textnormal{rank}(\xi)=0$ if and only if there is a basic function $f:M\to \mathbb{R}$ such that $\omega=df$. That is, $\textnormal{rank}(\xi)=0$ if and only if $\xi=0$. Furthermore:

\begin{remark}
If we consider a covering space $p:M_{\xi}\to M$ over $G$ which corresponds to the kernel of the $G$-homomorphism of periods $\textnormal{Per}_\xi$ then the expression \eqref{RankDeck} also shows that the rank of the group $\Gamma^G(M_\xi)$ equals the rank of the cohomology class $\xi$.
\end{remark}

Recall that the orbit space $X$ can be triangulated so that the basic cohomology $H_{\textnormal{bas}}^\bullet(G,\mathbb{R})$ becomes a finite dimensional vector space \cite{PPT}. Thus, as an important fact we have that:

\begin{proposition}\label{DenseRank1}
	The set of classes in $H_{\textnormal{bas}}^1(G,\mathbb{R})$ having rank $1$ is dense.
\end{proposition}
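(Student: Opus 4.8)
The plan is to show that the (already dense) set of classes with rational periods lies, after deleting the origin, inside the rank-$1$ locus. Concretely, a rank-$1$ class is precisely a nonzero scalar multiple of a nonzero integral class (equivalently, by Proposition \ref{IntegralClass}, a nonzero class whose period homomorphism has image a nontrivial cyclic subgroup of $\mathbb{R}$), and such classes are easy to produce densely.

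First I would use compactness of $X$ to write $H_1(G,\mathbb Z)\cong H_1(X,\mathbb Z)$ as a finitely generated abelian group with generators $c_1,\dots,c_r$, and consider the linear map $\Phi\colon H^1_{\mathrm{bas}}(G,\mathbb R)\to\mathbb R^r$, $\Phi(\xi)=(\mathrm{Per}_\xi(c_1),\dots,\mathrm{Per}_\xi(c_r))$. Since in our setting $H^1_{\mathrm{bas}}(G,\mathbb R)\cong H^1_{dR}(G)\cong\mathrm{Hom}(H_1(G,\mathbb R),\mathbb R)$ and $\mathrm{Per}_\xi$ is the de Rham pairing against $\xi$ (cf.\ \eqref{RankDeck}), the map $\Phi$ is injective (a class with all periods zero vanishes, as noted right after \eqref{RankDeck}), and its image is the linear subspace $W=\{v\in\mathbb R^r:\langle a,v\rangle=0\ \text{for every integer relation}\ \sum a_ic_i=0\ \text{in}\ H_1(G,\mathbb Z)\}$. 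The module of such relations is a subgroup of $\mathbb Z^r$, hence finitely generated by integer vectors, so $W$ is a $\mathbb Q$-rational linear subspace of $\mathbb R^r$; therefore $W\cap\mathbb Q^r$ is dense in $W$, and consequently $\Phi^{-1}(W\cap\mathbb Q^r)$ is dense in $H^1_{\mathrm{bas}}(G,\mathbb R)$ because $\Phi$ is a homeomorphism onto $W$.

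Second, I would check that $\Phi^{-1}(W\cap\mathbb Q^r)\setminus\{0\}$ consists of rank-$1$ classes: if $\Phi(\xi)\in\mathbb Q^r$ then every period $\mathrm{Per}_\xi(c_i)$ is rational, so the image of $\mathrm{Per}_\xi$ lies in a cyclic subgroup of $\mathbb R$ and $\mathrm{rank}(\xi)\le 1$, while $\xi\ne 0$ forces $\mathrm{rank}(\xi)\ge 1$ by the characterization $\mathrm{rank}(\xi)=0\iff\xi=0$. Since $H^1_{\mathrm{bas}}(G,\mathbb R)$ has no isolated points (the statement has content only when $H^1_{\mathrm{bas}}(G,\mathbb R)\ne 0$), deleting the single point $0$ from the dense set $\Phi^{-1}(W\cap\mathbb Q^r)$ keeps it dense, which yields the claim.

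The only point requiring care is the rationality of $W$, i.e.\ that imposing rational periods on the chosen generators $c_1,\dots,c_r$ carves out a dense subset rather than a thin one. This is exactly where the identification $H^1_{\mathrm{bas}}(G,\mathbb R)\cong\mathrm{Hom}(H_1(G,\mathbb R),\mathbb R)$ (valid for proper \'etale $G$ with compact $X$) and the finite generation of $H_1(G,\mathbb Z)$ are used: together they force $\Phi$ to surject onto a subspace cut out by integer linear equations. Everything else is routine. An alternative route avoiding $\Phi$ altogether is to observe that the integral classes form a full-rank lattice inside $H^1_{\mathrm{bas}}(G,\mathbb R)\cong H^1(X,\mathbb R)$, so their $\mathbb Q$-span is dense, and any nonzero rational multiple of an integral class has rank $1$ by Proposition \ref{IntegralClass}.
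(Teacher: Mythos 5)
Your proof is correct and follows essentially the same route as the paper, which simply invokes Farber's Corollary 2.2 adapted to $H^1_{\textnormal{bas}}(G,\mathbb{R})$ and $H^1_{\textnormal{bas}}(G,\mathbb{Z})$: the classical argument is precisely your observation that classes with rational periods (equivalently, rational multiples of integral classes) are dense, and that any such nonzero class has rank $1$ because its group of periods is a finitely generated subgroup of $\mathbb{Q}$, hence cyclic, while $\textnormal{rank}(\xi)=0$ only for $\xi=0$. Your explicit map $\Phi$ just spells out why the integral-period classes form a full-rank lattice, which is exactly what the identifications $H^1_{\textnormal{bas}}(G,\mathbb{R})\cong H^1_{dR}(G)\cong \textnormal{Hom}(H_1(G,\mathbb{R}),\mathbb{R})$ and the realizability of any homomorphism $H_1(G,\mathbb{Z})\to\mathbb{R}$ as a period homomorphism provide in the paper's setting.
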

\begin{proof}
The proof of this result is similar to that of Corollary 2.2 in \cite[p. 38]{F} when considering instead $H_{\textnormal{bas}}^1(G,\mathbb{R})$ and $H_{\textnormal{bas}}^1(G,\mathbb{Z})$.
\end{proof}

Let $\Gamma$ be an additive subgroup of $(\mathbb{R},+)$. Recall that an element $x$ in $\textbf{Nov}(\Gamma)$ can be represented in the form $x=\sum_{i=0}^\infty n_{i}\tau^{\gamma_i}$ where $n_i\in \mathbb{Z}$, $\gamma_i\in \Gamma$ with $\gamma_1>\gamma_2>\gamma_3>\cdots$ and $\gamma_i$ tends to $-\infty$. We denote by $\mathbb{Z}[\textbf{Nov}(\Gamma)]$ the group ring consisting only of finite sums $x$ as above. This is a subring of $\textbf{Nov}(\Gamma)$. Let $S\subset \mathbb{Z}[\textbf{Nov}(\Gamma)]$ be the subset consisting of the elements in $\mathbb{Z}[\textbf{Nov}(\Gamma)]$ with leading term $1$, so that we have a canonical inclusion of the localized ring $\mathcal{R}(\Gamma):=S^{-1}\mathbb{Z}[\textbf{Nov}(\Gamma)]$ into the Novikov ring $\textbf{Nov}(\Gamma)$. This is well defined since the elements of $S$ are invertible in $\textbf{Nov}(\Gamma)$. The ring $\mathcal{R}(\Gamma)$ is also a principal ideal domain which will be called the \emph{rational part} of $\textbf{Nov}(\Gamma)$. Consult \cite[s. 1.3]{F} for specific details. 

Firstly, let us denote by $\Gamma_\xi$ the image inside $\mathbb{R}$ of the $G$-homomorphism of periods $\textnormal{Per}_{\xi}: H_1(G,\mathbb{Z})\to \mathbb{R}$. It follows that $\Gamma_\xi$ is a finitely generated free abelian group. The class $\xi$ determines a ring homomorphism $\psi_\xi: \mathbb{Z}(\Pi_1(G,x_0))\to \mathcal{R}(\Gamma_\xi)$ defined as $\psi_\xi([\sigma]):=\tau^{\langle \xi,h([\sigma])\rangle}$ for all $[\sigma]\in \Pi_1(G,x_0)$. As above, the 
homomorphism $\psi_\xi$ gives rise to a local system of left $\mathcal{R}(\Gamma_\xi)$-modules $\mathcal{M}_\xi$ over $G\rr M$ and the homology $H_j^{\textnormal{tot}}(G,\mathcal{M}_\xi)$ is a finitely generated module over the principal ideal domain $\mathcal{R}(\Gamma_\xi)$. From Corollary 1.12 in \cite{F} we obtain that $\textbf{Nov}(\Gamma_\xi)$ is flat over $\mathcal{R}(\Gamma_\xi)$ so that we may get an isomorphism $H_j^{\textnormal{tot}}(G,\mathcal{L}_\xi)\cong \textbf{Nov}(\Gamma_\xi)\otimes_{\mathcal{R}(\Gamma_\xi)} H_j^{\textnormal{tot}}(G,\mathcal{M}_\xi)$ after applying a usual argument of spectral sequences. This immediately implies that the rank of $H_j^{\textnormal{tot}}(G,\mathcal{M}_\xi)$ equals $b_j(\xi)$ and the minimal number of generators of the torsion submodule of $H_j^{\textnormal{tot}}(G,\mathcal{M}_\xi)$ agrees with $q_j(\xi)$.

Secondly, let us consider the covering space $p:M_{\xi}\to M$ over $G$ which corresponds to the kernel of the $G$-homomorphism of periods $\textnormal{Per}_{\xi}$, see Proposition \ref{Covering}. It is simple to check that a $G$-loop $\sigma$ in $M$ lifts to another $(M_\xi\rtimes G)$-loop in $M_\xi$ if and only if $\textnormal{Per}_{\xi}(|\sigma|)=\langle \xi, |\sigma|\rangle=0$, where $|\sigma|\in H_1(G,\mathbb{R})$ denotes the corresponding homology class of the $G$-loop $\sigma$. Thus, by the isomorphism theorem it follows that the group of covering 
transformations $\Gamma^G(M_\xi)$ can be naturally identified with $L_\xi = H_1(G,\mathbb{Z})/\textnormal{ker}({\xi})$ and $\textnormal{Per}_{\xi}$ yields an isomorphisms between the groups $L_\xi$ and $\Gamma_\xi$. Observe that after fixing a base for the free abelian group $L_\xi$ we may identify the group  ring $\Lambda_\xi=\mathbb{Z}[L_\xi]$ with the ring of  Laurent integral polynomials $\mathbb{Z}[T_1,\cdots,T_r,T_1^{-1},\cdots,T_r^{-1}]$. Let us denote by $w_1,\cdots,w_r$ the weights of the variables $T_1,\cdots,T_r$ which are determined by the $G$-homomorphism of periods $\textnormal{Per}_{\xi}:L_\xi\to \Gamma_\xi$. These weights are linearly independent over $\mathbb{Z}$ so that we may define the weight of a monomial 
$T_1^{n_1}\cdots T_r^{n_r}$ as $\sum n_jw_j$. Denote by $S_\xi\subset \Lambda_\xi$ the set consisting of the Laurent polynomials such that the monomial of maximal 
weight appearing in them has coefficient $1$. This is a multiplicative subset and the localized ring $S_\xi^{-1}\Lambda_\xi$ is a principal ideal domain since it is isomorphic to the rational subring $\mathcal{R}(\Gamma_\xi)$ of the Novikov ring, compare again \cite[s. 1.3]{F}.

Let $p_1:E\to M$ be the universal covering over $G$ (see Example \ref{ExaUnivesal}) and let $p_2:F\to M$ be the covering space over $G$ corresponding to the kernel $\textnormal{ker}(\xi)\subset H_1(G,\mathbb{Z})$, after mapping it to $\Pi_1(G,x_0)$ by using the Hurewicz $G$-homomorphism, see Proposition \ref{Covering}. These covering spaces give rise to the action groupoids $E\rtimes G\rr E$ and $F\rtimes G\rr F$. By viewing at the total groupoid homology with local coefficients in $\mathcal{M}_\xi$ as the total groupoid equivariant homology described in Proposition \ref{EilenbergProp} we get isomorphisms among the total chain complexes
$$\mathcal{R}(\Gamma_\xi)\otimes_{\mathbb{Z}(\Pi_1(G,x_0))} \tilde{C}_\bullet(E)\cong S_\xi^{-1}\Lambda_\xi\otimes_{\Lambda_\xi} \tilde{C}_\bullet(F)\cong S_\xi^{-1} \tilde{C}_\bullet(F).$$

It is important to notice that in the previous identifications we used the isomorphism $\textnormal{Per}_{\xi}:L_\xi\to \Gamma_\xi$. As localization is an exact functor we obtain that $H_\bullet^{\textnormal{tot}}(G,\mathcal{M}_\xi)\cong  S_\xi^{-1} H_\bullet(F,\mathbb{Z})$. Hence, the Novikov Betti number $b_j(\xi)$ coincides with the rank of $H_j(F,\mathbb{Z})$ and the Novikov torsion number $q_j(\xi)$ equals the minimal number of generators of the torsion submodule of the $S_\xi^{-1}\Lambda_\xi$-submodule $S_\xi^{-1} H_j(F,\mathbb{Z})$.

\begin{remark}\label{SameResults}
The reader probably already noticed that the definitions of the Novikov numbers provided above for this new setting became both natural and straightforward after having described the algebraic/differential topology notions from Sections \ref{S:2} and \ref{S:3}. It is left as an exercise to the reader to verify that similar results as those in Sections 1.5 and 1.6 from \cite{F} may be adapted in our context without so many changes along the proofs. In particular, we have that if $\xi_1,\xi_2\in H_{\textnormal{bas}}^1(G,\mathbb{R})$ are two basic cohomology classes such that $\textnormal{ker}(\xi_1)=\textnormal{ker}(\xi_2)$ then $b_j(\xi_1)=b_j(\xi_2)$ for all $j$. Also, $q_j(\xi_1)=q_j(\lambda\xi_2)$ for all $\lambda\in \mathbb{R}$ with $\lambda>0$.
\end{remark}

\subsection{Novikov inequalities}

A Lie groupoid is said to be \emph{étale} if its manifolds of arrows and objects have the same dimension. Let us further assume that $G\rr M$ is \'etale, so that $[M/G]$ is an orbifold \cite{Moerd}. In this specific case we get that $H_{dR}^\bullet(G)\cong H_{\textnormal{bas}}^\bullet(G,\mathbb{R})$ (see \cite{TuX}), and, in turn, $H_{dR}^\bullet(G)\cong H^\bullet(X,\mathbb{R})$. It follows that we may identify the total singular homology $H_1(G,\mathbb{Z})$ of $G$ with the singular homology $H_\bullet(X,\mathbb{Z})$ of $X$.

\begin{remark}
If $p:E\to M$ is any covering space over $G$ (e.g. the universal covering from Example \ref{ExaUnivesal}), then the action groupoid $E\rtimes G\rr E$ is a proper \'etale Lie groupoid as well, meaning that it also represents an orbifold \cite{Moerd}. On the one hand, by Remark \ref{xi0Remark}  we have that if $\xi=0$ then $b_j(0)$ and $q_j(0)$ respectively recover the corresponding Betti and torsion numbers of the orbit space $X$. On the other hand, the action groupoids $E\rtimes G\rr E$ and $F\rtimes G\rr F$ associated to the covering spaces $p_1$ and $p_2$ over $G$ mentioned above are also proper \'etale Lie groupoids. Therefore, it follows that after naturally adapting Corollaries 4.13 and 4.14 from \cite[p. 224]{MoMr} to the homology case we may think of the total homologies $H_\bullet^{\textnormal{tot}}(G,\mathcal{L}_\xi)$, $H_\bullet^{\textnormal{tot}}(G,\mathcal{M}_\xi)$, $H_\bullet(E,\mathbb{Z})$ and $H_\bullet(F,\mathbb{Z})$ as being respectively identified with the usual homologies of the orbit spaces $H_\bullet(X,\pi_\ast(\mathcal{L}_\xi))$, $H_\bullet(X,\pi_\ast(\mathcal{M}_\xi))$, $H_\bullet(E/E\rtimes G,\mathbb{Z})$ and $H_\bullet(F/F\rtimes G,\mathbb{Z})$ where $\pi:M\to X$ denotes the canonical orbit projection.
\end{remark}

We are now in conditions to prove the Novikov inequalities for compact orbifolds. This can be done by following the strategy described in \cite[s. 2.3]{F} step by step. It is worth mentioning that the inequalities below depend at some point on the usual Morse inequalities for orbifolds which were already proven in \cite{H} (see also \cite{OV}). Although the ideas of the proof are natural and straightforward adaptations of the classical ones, we will bring enough details in order to use most of the machinery introduced in the previous sections.

\begin{theorem}\label{ThmNokivovOrbifolds}
Let $G\rr M$ be a proper and \'etale Lie groupoid such that the orbit space $X$ is compact. Let $\omega$ be a Morse closed basic $1$-form on $M$. If $c_j(\omega)$ denotes the number of stacky zeros in $X$ having stacky Morse index $j$ then
\begin{equation}\label{NovikovInequalities}
c_j(\omega)\geq b_j(\xi)+q_j(\xi)+q_{j-1}(\xi),
\end{equation}
where $\xi=[\omega]\in H^1_{\textnormal{bas}}(G,\mathbb{R})$ is the basic cohomology class of $\omega$.
\end{theorem}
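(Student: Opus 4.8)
The plan is to follow the classical Novikov argument, transplanting it to the étale–proper groupoid setting by working with the universal covering $p\colon E\to M$ over $G$ from Example~\ref{ExaUnivesal}, the associated action groupoid $E\rtimes G\rr E$, and the Eilenberg $G$-isomorphism (Proposition~\ref{EilenbergProp}) that identifies $H_\bullet^{\textnormal{tot}}(G,\mathcal{L}_\xi)$ with the equivariant homology $H_\bullet(\textbf{Nov}\otimes_{\mathbb{Z}(\Pi_1(G,x_0))}\tilde C_\bullet(E))$. By Remark~\ref{SameResults} and Proposition~\ref{DenseRank1} it suffices, up to a density/semicontinuity argument, to treat the case $\operatorname{rank}(\xi)=1$, and moreover (scaling $\xi$ as in Remark~\ref{SameResults}) we may assume $\xi$ lies arbitrarily close to an integral class; composing $\textnormal{Per}_\xi$ with a suitable homomorphism $\Gamma_\xi\to\mathbb{Z}$ lets us reduce to a single ``lift to $S^1$'' situation à la Proposition~\ref{IntegralClass}. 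First I would set up the Morse–Bott chain complex for $\omega$ on the orbit space: triangulating the compact $X=M/G$ (using \cite{PPT}), pulling back to a $G$-CW-type filtration of $M$ whose cells correspond to the critical orbits $\mathcal{O}_x$ of $\omega$ weighted by the negative normal representation $G_x\curvearrowright\nu_-(\mathcal{O}_x)$, so that $c_j(\omega)$ counts the cells of stacky index $j$; this is exactly the data underlying the orbifold Morse inequalities of \cite{H,OV}.

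The core is the construction of a Novikov-type complex. I would take the handle/cell decomposition of $(M,\omega)$ and lift it to the covering $E$ (equivalently to $M_\xi$), obtaining a free chain complex $C_\bullet$ over the group ring $\mathbb{Z}[\Gamma^G(E)]$ (or over $\Lambda_\xi=\mathbb{Z}[L_\xi]$ after passing to $M_\xi$) with exactly $c_j(\omega)$ generators in degree $j$. Here the key analytic input is Lemma~\ref{FormulaEqui}: on $M_\xi$ the pullback $p^\ast\omega$ is exact, $p^\ast\omega=df$, and $f$ transforms under $\varphi\in\Gamma^G(E)$ by $f(\varphi(x))=f(x)+\alpha_\xi(\varphi)$, so that $f$ is a genuine real-valued Morse–Bott function on the covering whose gradient flow (with respect to the lifted groupoid metric of \cite{dHF}) provides the attaching data; the $\Gamma^G(E)$-equivariance of this flow makes the resulting cellular complex free over the group ring, and the weight/leading-term bookkeeping is governed by the values of $\alpha_\xi$, which is where the Novikov ring enters. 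Tensoring $C_\bullet$ with $\textbf{Nov}$ (resp.\ with $\mathcal{R}(\Gamma_\xi)$, resp.\ localizing as $S_\xi^{-1}C_\bullet$) gives, by Proposition~\ref{EilenbergProp} and the isomorphisms of total chain complexes recorded just before Theorem~\ref{ThmNokivovOrbifolds}, a free complex over a principal ideal domain computing $H_\bullet^{\textnormal{tot}}(G,\mathcal{L}_\xi)$ with precisely $c_j(\omega)$ generators in degree $j$.

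From there the inequalities are pure algebra over a PID, identical to the manifold case: for any chain complex of free modules over a PID with $c_j$ generators in degree $j$ one has $c_j\ge b_j+q_j+q_{j-1}$ where $b_j$ is the rank of homology and $q_j$ the minimal number of torsion generators; I would invoke this (it is \cite[Lem.~2.4 or the ``Main Lemma''; s.~2.3]{F}) verbatim, with $b_j,q_j$ being exactly the Novikov Betti and torsion numbers by the three equivalent descriptions established above. Finally I would remove the rank-one assumption: for general $\xi$, approximate by rank-one classes $\xi_n\to\xi$ with $\ker\xi_n$ eventually adapted to $\ker\xi$ (Proposition~\ref{DenseRank1}), use that $c_j(\omega)$ is unchanged under small perturbation of $[\omega]$ that keeps the same Morse form $\omega$, and use the semicontinuity $b_j(\xi)+q_j(\xi)\le b_j(\xi_n)+q_j(\xi_n)$ for $\xi_n$ near $\xi$ (an exactness/flatness argument with $\textbf{Nov}(\Gamma_{\xi_n})$ over $\textbf{Nov}(\Gamma_\xi)$, as in \cite[s.~1.5--1.6]{F}), to pass to the limit. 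The main obstacle I expect is making the equivariant Morse–Bott handle decomposition rigorous in the groupoid setting — ensuring the negative normal representations $G_x\curvearrowright\nu_-(\mathcal{O}_x)$ glue into an honest free $\mathbb{Z}[\Gamma^G(E)]$-complex on the covering, i.e.\ that the lifted gradient flow of $f$ respects the étale–proper structure and that the Morita invariance (Propositions~\ref{MoritaLocal}, \ref{MoritaMorse}) is compatible with this chain-level construction; once that stacky Morse complex is in hand, the passage to Novikov coefficients and the PID estimate are routine.
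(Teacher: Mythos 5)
The central step of your construction---that the lifted gradient flow of $f$ on $M_\xi$ (equivalently on $E$) yields a free chain complex over $\mathbb{Z}[\Gamma^G(M_\xi)]$ with exactly $c_j(\omega)$ generators in degree $j$ computing the relevant equivariant/orbit-space homology---is precisely what is not available in the orbifold setting, and you leave it unproved. When a critical orbit $\mathcal{O}_x$ has nontrivial isotropy $G_x$ acting on $\nu_-(\mathcal{O}_x)$, crossing the critical level attaches to the orbit space the cone on $S^{j-1}/G_x$ rather than a $j$-cell, so there is no CW/handle structure on $X_\xi$ (nor a free $\mathbb{Z}[\Gamma^G(M_\xi)]$-complex) with one generator per critical point of stacky index $j$; this is exactly why the Morse inequalities for orbifolds are themselves a theorem (\cite{H}) and are not proved by exhibiting such a complex. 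Since your final step is the PID estimate $c_j\ge b_j+q_j+q_{j-1}$ applied to a free complex with $c_j$ generators, the argument does not go through as written: the point you flag as ``the main obstacle'' carries the entire weight of the theorem.

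The paper's proof avoids building any Morse--Novikov complex. After the same reduction to an integral (rank-one) class, it passes to the covering $p:M_\xi\to M$ of Proposition~\ref{Covering}, writes $p^\ast\omega=df$ with $f(Tx)-f(x)=-1$ via Lemma~\ref{FormulaEqui}, and then works on the orbit space $X_\xi$: it cuts out the compact band $N=\overline f^{-1}([b,b+1])$ between a regular level $V$ and its $\overline T$-translate, observes that $\overline f|_N$ is a stacky Morse function with $c_j(\overline f|_N)=c_j(\omega)$, applies the already-established orbifold Morse inequalities of \cite{H} there, and finally runs Farber's algebraic argument (Theorem 2.4 of \cite{F}) on the simplicial complex $\overline C_\bullet(Y)$, $Y=\overline f^{-1}((-\infty,b+1])$, regarded as finitely generated $\mathbb{Z}[\overline T]$-modules, analyzing Betti numbers at the prime ideals of $\mathbb{Z}[\overline T]$ to produce $b_j(\xi)$ and $q_j(\xi)$. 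Your reduction to rank one also needs repair: the paper does not use semicontinuity (which, as you state it, would in any case miss the $q_{j-1}$ term), but instead chooses rank-one classes $\xi_n$ in $N_\xi=\lbrace\eta:\eta|_{\ker(\xi)}=0\rbrace$ with $b_j(\xi_n)=b_j(\xi)$ and $q_j(\xi_n)=q_j(\xi)$ exactly (Proposition~\ref{DenseRank1}, Remark~\ref{SameResults}), and replaces $\omega$ by $\omega_n=\omega-\sum_k(a_k-a_{n,k})\omega_k$ with the $\omega_k$ vanishing near the zero set, so that $c_j(\omega_n)=c_j(\omega)$; one cannot ``keep the same Morse form $\omega$'' while changing its cohomology class, so some such modification of the representative is unavoidable.
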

\begin{proof}
It suffices to prove the inequalities under the additional assumption that the basic cohomology class $\xi$ is integral. That is, $\xi\in H_{\textnormal{bas}}^1(G,\mathbb{Z})$. The latter requirement is equivalent to asking that $\xi$ has rank $1$. Indeed, basic cohomology classes $\xi$ of rank 1 are real multiples of integral basic cohomology classes, namely, $\xi = \lambda \xi_0$ where   $\xi\in H_{\textnormal{bas}}^1(G,\mathbb{Z})$ and $\lambda$ is a nonzero real number. It is because in this specific case the image of the $G$-homomorphism of periods $\textnormal{Per}_\xi$ is a cyclic subgroup in $\mathbb{R}$ so that all periods are integral multiples of a minimal period $\lambda\in \mathbb{R}_{>0}$. In other words, the $\lambda^{-1}\xi$ has all integral periods and it belongs to $H_{\textnormal{bas}}^1(G,\mathbb{Z})$. Therefore, if $\xi=\lambda \xi_0$ has rank $1$ and $\omega$ is a Morse closed basic 1-form in the class $\xi$ then $\omega_0=\lambda^{-1}\omega$ is another Morse closed basic 1-form in the class $\xi_0$ having the same zeros. In consequence, $b_j(\xi)=b_j(\xi_0)$ and $q_j(\xi)=q_j(\xi_0)$ by Remark \ref{SameResults}.

Assume for a moment that the Novikov inequalities \eqref{NovikovInequalities} hold true for basic cohomology classes of rank $1$. The argument to prove that the previous assumption is enough to ensure that the inequalities hold true for every basic cohomology class of rank $>1$ is similar to that in Lemma 2.5 from \cite{F} after considering instead basic cohomology. We sketch the argument here for the sake of completeness as well as the role it plays in our proof. Suppose that $\xi\in H_{\textnormal{bas}}^1(G,\mathbb{R})$ is a basic cohomology class of rank $>1$ and let $\omega$ be a Morse closed basic $1$-form in $\xi$. Let $S\subset X$ denote the set of zeroes of $\overline{\omega}$. This is a finite set since $X$ is compact. Consider the vector subspace $N_\xi=\lbrace \eta\in H_{\textnormal{bas}}^1(G,\mathbb{R}): \eta|_{\ker(\xi)}=0\rbrace$. By similar arguments as those used in the proof of Theorem 1.44 from \cite{F} and by Proposition \ref{DenseRank1} it follows that there exists a sequence of rank 1 basic cohomology classes $\xi_n\in N_\xi$ such that $\xi_n\to \xi$ as $n$ goes to $\infty$, $b_j(\xi_n)=b_j(\xi)$ and $q_j(\xi_n)=q_j(\xi)$ for all $j$ and $n$. Let us fix a basis $\eta_1,\cdots,\eta_r$ of $N_\xi$ whose elements are respectively represented by closed basic $1$-forms $\omega_1,\cdots,\omega_r$. Note that since $G$ is proper and $\eta_k|_{\ker(\xi)}=0$ we may ensure that there are open neighborhoods $U_k\subset M$ such that $U_k/G_{U_k}\subset X$ are open neighborhoods of $S$ and $\overline{\omega}_k$ vanishes identically on $U_k/G_{U_k}$ for all $k=1,\cdots,r$. It is clear that we can rewrite $\xi=\sum_{k=1}^r a_k \eta_k$ and $\xi_n=\sum_{k=1}^r a_{k,n} \eta_k$ with $a_k,a_{n,k}\in \mathbb{R}$ such that $a_{n,k}\to a_k$ as $n$ goes to $\infty$. Let us define $\omega_n=\omega-\sum_{k=1}^r(a_k-a_{n,k})\omega_k$. This is a basic closed $1$-form for which there is an open neighborhood $U\subset M$ made out of the $U_k's$ above such that $S\subset U/G_{U}$ and $\omega-\overline{\omega}_n$ vanishes identically on $U/G_{U}$. Furthermore, for $n$ large enough it follows that $\omega_n$ has no zeros outside $U$. That is, $c_j(\omega_n)=c_j(\omega)$ for any $j$ provided that $n$ is large enough. Note that from the defining formula above it follows that the basic cohomology class $[\omega_n]$ agrees with $\xi_n$ and they have rank $1$. In consequence, we get that if the Novikov inequalities hold true for $[\omega_n]$ then they must hold true also for $\xi$. 

Suppose that $\xi$ has rank $1$. Let us take a covering space $p:M_{\xi}\to M$ over $G$ which corresponds to the kernel of the $G$-homomorphism of periods $\textnormal{Per}_\xi\circ h^{-1}:H_1(G,\mathbb{Z})\to \mathbb{R}$ of the cohomology class $\xi$, see Proposition \ref{Covering}. In this case $M_\xi$ has an infinite cyclic group of equivariant covering transformations $\Gamma^G(M_\xi)$ whose generator is denoted by $T$. We already know that the pullback $p^\ast (\omega)$ is a closed basic-exact $1$-form so that there exists a basic function $f:M_\xi \to \mathbb{R}$, uniquely determined up to constant, such that $p^\ast (\omega)=df$. From Lemma \ref{FormulaEqui} it follows that $f(Tx)-f(x)=c$ is a constant for all $x\in M_\xi$. The number $c$ equals the minimal period of $\omega$ so that we may assume $c = \pm 1$ in our case since $\xi$ is integral. Assume that the generator $T$ is chosen so that $f(Tx)-f(x)=-1$ for all $x\in M_\xi$. Otherwise, we may take $T^{-1}$ instead of $T$.

Let us consider the action groupoid $M_\xi\rtimes G\rr M_\xi$, the stacky function $\overline{f}:[M_\xi/M_\xi\rtimes G]\to \mathbb{R}$ determined by $f$ and the Lie groupoid morphism $p:M_\xi\rtimes G\to G$ induced by the covering space $M_{\xi}\to M$ over $G$. We denote by $X_\xi$ the orbit space $M_\xi/M_\xi\rtimes G$ and by $\overline{T}:X_\xi\to X_\xi$ and $\overline{p}:X_\xi\to X$ the induced maps between the orbit spaces. On the one hand, observe that the formula $\overline{f}(\overline{T}[x])-\overline{f}([x])=-1$ holds true for all $[x]\in X_\xi$. On the other hand, the critical points of $f$ are precisely the elements in $p^{-1}(x)$ of the zeros $x\in M$ of $\omega$ so that the critical points of the stacky function $\overline{f}$ in $X_\xi$ are given by the elements in $\overline{p}^{-1}([x])$ of the zeroes $[x]$ of $\overline{\omega}$ in $X$. Pick a regular value $b\in \mathbb{R}$ of $\overline{f}$ and set $V=\overline{f}^{-1}(b)$, $N=\overline{f}^{-1}([b,b+1])$, and $Y=\overline{f}^{-1}((-\infty,b+1])$. Note that the projection $\overline{p}$ determines a one-to-one correspondence between the critical points of $\overline{f}|_{N}$ and the zeroes of $\overline{\omega}$. Furthermore, $\overline{f}|_{N}:N\to [b,b+1]$ is a stacky Morse function since $p_\xi$ is a local diffeomorphism and $\omega$ is of Morse type. Therefore, $c_j(\overline{f}|_{N})=c_j(\omega)$ for all $j=0,1,2,\cdots$. The homeomorphism $\overline{T}$ maps $Y$ into itself. As $X_\xi$ can be triangulated (see \cite{PPT}), it follows that we may fix a triangulation of $V$ which in turn induces another triangulation of $\overline{T}^{-1}V$, thus obtaining a simplicial isomorphism $\overline{T}:\overline{T}^{-1}V\to V$. Let us choose a triangulation of $N$ in such a way $V$ and $\overline{T}^{-1}V$ are sub-complexes. So, after applying the homeomorphism $\overline{T}$ we can get a triangulation of the whole $Y$ so that $\overline{T}:Y\to Y$ is represented by a simplicial map. In other words, we have obtained a chain complex $\overline{C}_\bullet(Y)$ of simplicial chains which actually is a complex of finitely generated $\mathbb{Z}[\overline{T}]$-modules.

The standard Morse inequalities for orbifolds were proved in Theorem 7.11 from \cite{H}. Hence, by mimicking the analysis of the Betti numbers $b_j(\overline{C},\mathfrak{p})$ associated to different prime ideals $\mathfrak{p}\subset\mathbb{Z}[\overline{T}]$, exactly to how it is done in the remaining part of the proof of Theorem 2.4 in \cite{F}, we can get the inequalities 
$$\sum_{k=0}^j(-1)^{k}c_{j-k}(\omega)\geq q_j(\xi)+\sum_{k=0}^j(-1)^kb_{j-k}(\xi),$$
which are slightly stronger than the Novikov inequalities we wanted to prove.
\end{proof}

In order to exhibit some examples where Theorem \ref{ThmNokivovOrbifolds} can be worked out we look at the special class of actions groupoids. Let us suppose that $K$ is a Lie group acting on a connected smooth manifold $M$, so that we can construct the action groupoid $K\ltimes M\rr M$. On the one hand, the set of $(K\ltimes M)$-loops based at $x\in M$ is in one-to-one correspondence with the set of pairs $(\sigma,k)$ where $\sigma:[0,1]\to M$ is a smooth path starting at $x$ and $k\in K$ is such that $k\sigma(0)=\sigma(1)$, compare \cite[p. 608]{BH} and \cite[p. 192]{MoMr}. Additionally, there exists a short exact sequence of groups:
\begin{equation}\label{actionGpaths}
 1\to \Pi_1(M,x)\to \Pi_1(K\ltimes M,x)\to \Pi_0(K)\to 1,
\end{equation}
where $\Pi_0(K)$ denotes the set of connected components of $K$. On the other hand, it is well known that a differential form on $M$ is \emph{basic} if it is $K$-invariant and horizontal, the latter term meaning that it vanishes on vectors tangent to the $K$-orbits. As shown in \cite{Wa}, a differential form is basic in the previous sense if and only if it is a basic differential form on the action groupoid $K\ltimes M\rr M$.

The equivariant version of the Novikov inequalities was obtained by Braverman--Farber in \cite{BravermanFarber}, provided that both $K$ and $M$ are compact. Therefore, as an immediate consequence we get:

\begin{corollary}
Let $K$ be a finite Lie group acting effectively on a compact manifold $M$. Then, the Novikov inequalities for the quotient orbifold $[M/K\ltimes M]\approx M/K$ coincide with the equivariant Novikov inequalities associated to the action of $K$ on $M$.
\end{corollary}

The previous result provides us with a simple way to obtain examples where the Novikov inequalities for certain orbifolds can be described from known ones \cite{BravermanFarber0,BravermanFarber}.

Let us exhibit other interesting situations:

\begin{example}
Let $G\rr M$ be a proper and étale Lie groupoid for which there exists a basic smooth function $f:M\to S^1$ with no critical points. If $d\theta$ is the angle form on $S^1$ then the Novikov numbers $b_j(\xi)$ and $q_j(\xi)$ for $\xi=[f^\ast(d\theta)]$ must vanish. More importantly, if we consider the product orbifold $([M/G]\times N,\overline{f^\ast(d\theta)}+\omega)$ where $N$ is a compact manifold and $\omega$ is a closed 1-form of Morse type on $N$ then we can describe the Novikov inequalities for $f^\ast(d\theta)+\omega$ out of the Novikov inequalities for $\omega$. It is worth saying that $f^\ast(d\theta)+\omega$ is a closed basic 1-form of Morse type, as it is locally given by the sum of two Morse--Bott functions which give rise to a Morse--Bott function on the product $M\times N$, see \cite[p. 736]{Latschev}. Some explicit computations for the Novikov inequalities in the classical case of compact manifolds can be found for instance in \cite[s. 6.3.7]{BanHurSpaeth}.
\end{example}

\begin{example}
Let $K$ be a connected Lie group and $M$ be a simply connected manifold. Suppose that there is a proper and locally free action of $K$ on $M$, so that the quotient space $M/K$ inherits the structure of an orbifold which can be codified by the action groupoid $K\ltimes M\rr M$. From the sequence \eqref{actionGpaths} it follows that such an action groupoid is $(K\ltimes M)$-simply connected. Therefore, as consequence of Proposition \ref{BasicTrivial} we conclude that the Novikov inequalities associated to any closed 1-form of Morse type on $M/K$ are completely determined by the orbifold Morse inequalities showed in \cite{H}. A particular instance of this situation is provided by the action of the circle $S^1\subset \mathbb{C}$ on the 3-sphere $S^3\subset \mathbb{C}^2$ given by $k\cdot (z,w)=(k^pz,k^qw)$ where $p,q$ are integers for which $\gcd(p,q)=1$.
\end{example}

\begin{example}
	If $M$ is a compact manifold with boundary then its \emph{double} $N$ can be constructed by gluing together a copy of $M$ and its mirror image along their common boundary. It follows that there is natural reflection action of $\mathbb{Z}_2$ on the manifold $N$ fixing the common boundary and the corresponding quotient space can be identified with $M$, so that $M$ has a natural orbifold structure. Therefore, we may think of the Novikov inequalities for these kinds of orbifolds as those inequalities described for manifolds with boundary in \cite{BravermanSilantyev}. A detailed study about orbifolds whose underlying space is a manifold with boundary can be found in \cite{Lange} and its quoted references.
\end{example}

\subsection{Zeros of symplectic vector fields}

We finish the paper by quickly explaining how Theorem \ref{ThmNokivovOrbifolds} could be applied to find a lower bound for the numbers of zeros of certain symplectic vector fields on symplectic orbifolds. Let us first introduce the notion of 0-symplectic groupoid \cite{hsz}. A \emph{foliation groupoid} is a Lie groupoid $G \rightrightarrows M$ whose space of objects $M$ is Hausdorff and whose isotropy groups $G_x$ are discrete for all $x\in M$. For instance, every \'etale  Lie groupoid with Hausdorff objects manifold is a foliation groupoid. The converse is not true, however every foliation groupoid is Morita equivalent to an \'etale groupoid. As shown in \cite{crainic,CraMoer}, being a foliation groupoid is equivalent to the associated Lie algebroid anchor map $\rho:A\to TM$ being injective. As a consequence, the manifold $M$ comes with a regular foliation $\mathcal{F}$ tangent to the leaves of $\mathrm{im}(\rho)\subseteq TM$. Note that if $G \rightrightarrows M$ is source-connected the leaves of $\mathcal{F}$ coincide with the groupoid orbits. We say that a basic $2$-form $\Omega$ on $M$ is \emph{nondegenerate} if $\ker(\Omega)=\mathrm{im}(\rho)\subseteq TM$. Accordingly, a $0$-\emph{symplectic groupoid} is a foliation groupoid equipped with a closed and nondegenerate basic $2$-form. It follows immediately that $(M,\Omega)$ is a pre-symplectic manifold with $\textnormal{ker}(\Omega)=T\mathcal{F}$ in the sense of \cite{LS}. Additionally, there is a left action of the product groupoid $G\times G \rightrightarrows M\times M$ on $G$ along $(s,s)$ given by $(g,h)f=gfh^{-1}$. The components of the orbits of this action define a regular foliation $\mathcal{F}_1$ of $G$ satisfying $T\mathcal{F}_1=\textnormal{ker}(ds)+\textnormal{ker}(dt)$. In particular, $\Omega$ is nondegenerate if and only if $\textnormal{ker}(s^\ast \Omega)=\textnormal{ker}(ds)+\textnormal{ker}(dt)$. As a consequence, $(G,s^\ast \Omega)$ is also a pre-symplectic manifold with $\textnormal{ker}(s^\ast \Omega)=T\mathcal{F}_1$. This notion of symplectic form is Morita invariant so that it yields a well defined notion of symplectic form over the differentiable stack $[M/G]$ presented by $G\rr M$.

Let $G\rr M$ be a foliation groupoid and let $\rho:A\to TM$ denote its Lie algebroid. The set of \emph{basic vector fields} $\mathfrak{X}_{\textnormal{bas}}(G)$ is by definition the quotient
$$\mathfrak{X}_{\textnormal{bas}}(G)=\frac{\lbrace (v_1,v_0)\in \mathfrak{X}(G)\times \mathfrak{X}(M): ds(v_1)=v_0\circ s,\ dt(v_1)=v_0\circ t \rbrace}{\lbrace (v_1,v_0): ds(v_1)=v_0\circ s,\ dt(v_1)=v_0\circ t,\ v_1\in (\ker(ds)+\ker(dt)) \rbrace}.$$

That is, a basic vector field is not strictly a vector field, but a pair of equivalence classes of vector fields. It is simple to see that a basic vector field $v= (v_1, v_0)$ is determined by its 2st component $v_0$. By Proposition 5.3.12 from \cite{hsz} we know that Morita equivalent foliation groupoids have isomorphic spaces of basic vector fields so that we may think of $\mathfrak{X}_{\textnormal{bas}}(G)$ as the space of vector field on $[M/G]$. Note that if we identify the basic forms $\Omega_{\textnormal{bas}}^\bullet(G)$ with the set of pairs $\lbrace (\theta_1,\theta_0)\in \Omega^\bullet(G)\times \Omega^\bullet(M): s^\ast(\theta_0)=\theta_1= t^\ast(\theta_0)\rbrace$ then we have contraction operations and Lie derivatives $\iota: \mathfrak{X}_{\textnormal{bas}}(G)\times \Omega_{\textnormal{bas}}^\bullet(G)\to \Omega_{\textnormal{bas}}^{\bullet-1}(G)$ and $\mathcal{L}:\mathfrak{X}_{\textnormal{bas}}(G)\times \Omega_{\textnormal{bas}}^\bullet(G)\to \Omega_{\textnormal{bas}}^{\bullet}(G)$ respectively defined by

$$\iota_v\theta=(\iota_{\tilde{v_1}}\theta_1,\iota_{\tilde{v_0}}\theta_0)\quad\textnormal{and}\quad \mathcal{L}_v\theta=(\mathcal{L}_{\tilde{v_1}}\theta_1,\mathcal{L}_{\tilde{v_0}}\theta_0),$$
where $(\tilde{v_1},\tilde{v_0})\in \mathfrak{X}(G)\times \mathfrak{X}(M)$ is a representative of $v$. These expressions do not depend on the choice of $(\tilde{v_1},\tilde{v_0})$, see \cite{hsz}.

Let us now suppose that $\Omega$ defines a $0$-symplectic structure on $G\rr M$. The nondegeneracy requirement implies that the contraction with a symplectic form $\Omega$ induces a linear isomorphism $\Omega^\flat:\mathfrak{X}_{\textnormal{bas}}(G)\to \Omega_{\textnormal{bas}}^1(G)$. We say that a basic vector field $v$ is \emph{symplectic} if $\mathcal{L}_{\tilde{v_0}}\Omega=0$. Note that after using the Cartan formula for the Lie derivative the latter requirement is equivalent to asking that $\omega=\iota_{\tilde{v_0}}\Omega$ is a closed basic $1$-form. But we already know that if $\omega$ is a closed $1$-form then the formula $\omega=\iota_{\tilde{v_0}}\Omega$ defines a basic vector field $v$ which must be symplectic since $\omega$ is closed. Therefore, it follows that there is a one-to-one correspondence between the closed basic 1-forms and basic symplectic vector fields.

Motivated by Proposition 2.6 in \cite{LM} we define the \emph{critical point set of a basic vector field} $v$ as the critical point set of $\tilde{v_0}$ viewed as a section of the vector bundle $TM/\textnormal{im}(\rho)\to M$. It is simple to check that such a definition does not depend on $\tilde{v_0}$. Because of the nondegeneracy condition imposed over $\Omega$ it holds automatically that the critical points of $v$ and $\omega=\iota_{\tilde{v_0}}\Omega$ agree. Hence, on 0-symplectic groupoids, the problem of estimating from below the number of zeros of closed basic 1-forms is equivalent to finding a lower bound for the numbers of zeros of basic symplectic vector fields. In consequence, the natural generalization of the Novikov theory we have developed in this paper provides a tool for using topological methods to study zeros of symplectic vector fields on symplectic orbifolds, which are particular cases of $0$-symplectic groupoids. Furthermore, it also opens new research directions for many important physical models which can be described by the Hamiltonian formalism over orbifolds allowing closed basic 1-forms as their Hamiltonians. This can be done in the same spirit that it was studied by Novikov in \cite{No2}.

\vspace*{0.5cm}
\noindent {\bf Conflict of interests.} The author declares that he has no conflict of interests.\\

\noindent {\bf Data availability.} The author declares that his manuscript has no associated data.

\end{document}